\documentclass[final,12pt]{elsarticle}
\usepackage{geometry}
\usepackage{amssymb}
\usepackage{amsmath}
\usepackage{amsthm}

\usepackage{microtype}
\allowdisplaybreaks
\usepackage{graphicx}
\usepackage{url}
\usepackage{xcolor}
\usepackage{etoolbox}    
\usepackage{tablefootnote}
\usepackage{threeparttable}
\usepackage{tablefootnote}
\usepackage[hidelinks, colorlinks, allcolors=blue, bookmarksdepth=2]{hyperref}
\usepackage{framed,multirow}

\usepackage{amsfonts}
\usepackage{mathrsfs}
\usepackage{upgreek}
\usepackage{bbm}

\usepackage{epstopdf}
\usepackage{algorithmic}
\ifpdf
\DeclareGraphicsExtensions{.eps,.pdf,.png,.jpg}
\else
\DeclareGraphicsExtensions{.eps}
\fi
\usepackage[export]{adjustbox}

\usepackage{xspace}
\usepackage{bm}
\usepackage{physics}
\usepackage{stmaryrd}
\usepackage{enumitem}
\usepackage{subcaption}
\usepackage{mathtools}

\usepackage{booktabs}
\usepackage{multirow, multicol, makecell}
\usepackage{longtable}
\usepackage{threeparttable}

\usepackage{cleveref}

\newtheorem{proposition}{Proposition}
\newtheorem{remark}{Remark}

\crefname{assumption}{Assumption}{Assumptions}
\crefname{definition}{Definition}{Definitions}
\crefname{lemma}{Lemma}{Lemmas}
\crefname{remark}{Remark}{Remarks}
\crefname{theorem}{Theorem}{Theorems}
\crefname{proposition}{Proposition}{Propositions}
\crefname{section}{Section}{Sections}
\crefname{figure}{Fig.}{Figs.}
\crefname{equation}{}{}
\crefname{table}{Table}{Tables}
\crefname{appendix}{Appendix}{Appendices}
\def\stripappendixprefix#1~#2\relax{#2}
\creflabelformat{appendix}{#2\expandafter\stripappendixprefix#1\relax#3}

\renewcommand{\H}[0]{\mathsf{H}}
\newcommand{\D}[0]{\mathsf{D}}
\newcommand{\Q}[0]{\mathsf{Q}}
\newcommand{\E}[0]{\mathsf{E}}
\newcommand{\M}[0]{\mathsf{M}}

\newcommand{\F}[0]{\mathsf{F}}

\newcommand{\A}[0]{\mathsf{A}}
\newcommand{\I}[0]{\mathsf{I}}

\renewcommand{\S}[0]{\mathsf{S}}

\newcommand{\Z}[0]{\mathsf{Z}}
\renewcommand{\P}[0]{\mathsf{P}}
\newcommand{\B}[0]{\mathsf{B}}
\newcommand{\R}[0]{\mathsf{R}}
\newcommand{\N}[0]{\mathsf{N}}

\newcommand{\etal}[0]{{et~al.\@}\xspace}
\newcommand{\eg}[0]{{e.g.\@}\xspace}
\newcommand{\ie}[0]{{i.e.\@}\xspace}

\newcommand{\ignore}[1]{} 

\newcommand{\poly}[1]{\ensuremath{\mathbb{P}^{#1}}({\Omega}_k)}

\DeclareMathOperator*{\argmin}{arg\,min}
\newcommand{\der[2]}[]{\dfrac{{\rm d}#1}{{\rm d}#2}}%
\newcommand{\pder[2]}[]{\dfrac{\partial#1}{\partial#2}}
\newcommand{\fn}[1]{\mathcal{#1}}
\newcommand{\fnb}[1]{\bm{\mathcal{#1}}}

\journal{Journal of Computational Physics}

\begin{document}

\begin{frontmatter}



\title{Targeted Entropy-Stable Nonlinear Artificial Dissipation for High-Order Spectral-Element Summation-By-Parts Discretizations}


\author[uw]{Zelalem Arega Worku\corref{cor1}}
\ead{zelalem.worku@uwaterloo.ca}

\author[utias]{Alex Bercik}
\ead{alex.bercik@mail.utoronto.ca}

\author[uw]{David C. Del Rey Fern{\'a}ndez}
\ead{ddelreyfernandez@uwaterloo.ca}

\author[utias]{David W. Zingg}
\ead{david.zingg@utoronto.ca}

\cortext[cor1]{Corresponding author}

\affiliation[uw]{
  organization={Department of Applied Mathematics, University of Waterloo},
  addressline={200 University Avenue West},
  city={Waterloo},
  postcode={N2L 3G1},
  state={Ontario},
  country={Canada}
}

\affiliation[utias]{
  organization={Institute for Aerospace Studies, University of Toronto},
  addressline={4925 Dufferin St},
  city={Toronto},
  postcode={M3H 5T6},
  state={Ontario},
  country={Canada}
}

\begin{abstract}
Artificial dissipation improves the robustness of high-order methods for under-resolved flows, but can compromise accuracy and suppress physically relevant structures if its strength and spatial distribution are not appropriately controlled. We present a sensor-activated nonlinear artificial-dissipation framework, inspired by the residual correction method, for continuous and discontinuous summation-by-parts discretizations. In contrast to many existing spectral-element dissipation operators that either reduce the formal order of accuracy or target only the highest polynomial modes, the proposed framework permits design- and higher-order dissipation with explicit control over its magnitude, spatial distribution, activation, and convergence rate under mesh refinement. This modular construction also recovers several existing methods through appropriate choices of its components. The dissipation preserves discrete conservation and semidiscrete entropy stability whenever the base method satisfies these properties. It is dimensionally consistent, element local, and discretization agnostic, as the added term is constructed independently of the base spatial residual. For smooth solutions under suitable mesh assumptions, we establish parameter choices that retain the formal accuracy of the base discretization. Numerical experiments with summation-by-parts discretizations of the Burgers and Euler equations demonstrate substantially improved robustness for under-resolved flows and flows with shock waves while retaining high-order accuracy for smooth solutions, using the same parameter choices across several test problems. Overall, these results show that targeted dissipation can improve the robustness of high-order methods without excessive damping of resolved flow structures.
\end{abstract}

\begin{keyword}
Artificial dissipation\sep Entropy stability\sep High-order methods\sep Summation-by-parts\sep Nonlinear stabilization\sep Under-resolved flows
\end{keyword}

\end{frontmatter}



\section{Introduction} 
The potential of high-order methods for efficient simulation of computational fluid dynamics (CFD) problems is well established \cite{wang2013high}. Recent developments in entropy-conservative and entropy-stable discretizations have strengthened their nonlinear stability properties \cite{chen2017entropy,crean2018entropy}. Nevertheless, entropy stability alone does not ensure adequate damping of under-resolved modes. Supplementary artificial dissipation can reduce spurious oscillations, accelerate convergence to steady state, suppress unphysical growth of perturbations, and improve robustness by helping to preserve positivity of thermodynamic variables~\cite{pulliam2014fundamental,bercik2026practical,bercik2026stable}. Excessive dissipation, however, can degrade accuracy, suppress physically relevant structures, and reduce the maximum stable time step. An effective construction must therefore control where dissipation is applied, its magnitude, and its decay under mesh refinement, while retaining the desirable properties of the base discretization. 

Artificial dissipation can be introduced through interface penalties, separately added volume terms, or the discretization operators themselves. Interface dissipation commonly uses scalar penalties on conservative-variable jumps based on local wave speeds \cite{toro2013riemann,guermond2016fast,chen2017entropy}, or matrix penalties on entropy-variable jumps \cite{ismail2009affordable,derigs2017novel,winters2017uniquely,worku2023entropy}. For continuous Galerkin-type methods, solution-gradient jumps provide a related stabilization mechanism \cite{burman2004edge,crean2016investigation}, although they may adversely affect conditioning \cite{hicken2020entropy}. Dissipation can also be incorporated directly into derivative operators, as in upwind summation-by-parts (U-SBP) methods, which have been combined with flux-vector splitting for nonlinear conservation laws \cite{mattsson2017diagonal,ranocha2025robustness}. For demanding under-resolved flows, interface dissipation and U-SBP operators may still require supplementary volume dissipation. The literature on volume dissipation is extensive, particularly for shock capturing; Yu and Hesthaven \cite{yu2020study} compare several such models within the discontinuous Galerkin (DG) framework. Within the SBP finite difference framework, Mattsson \etal\ \cite{mattsson2004stable} developed artificial dissipation operators that preserve energy stability and design-order accuracy, and Stiernstr{\"o}m \etal\ \cite{STIERNSTROM} and Niemel{\"a} \cite{niemela2026residual} use conservation-law residuals to localize viscosity while retaining high-order accuracy in smooth regions. These constructions do not generally enforce a discrete entropy inequality for nonlinear systems, although Stiernstr{\"o}m \etal\ establish energy stability for scalar problems. 

Entropy-stable volume stabilizations closely related to the objective of this work include the local projection stabilization (LPS) of Hicken \cite{hicken2020entropy} and the high-order dissipation of Bercik, Craig Penner, and Zingg \cite{bercik2026stable}. Ranocha \etal\ \cite{ranocha2018stability} studied entropy-stable artificial dissipation and its relation to modal filtering within the SBP framework. In these approaches, the projection or derivative operator determines which solution components are damped, while polynomial annihilation properties determine the smooth-state accuracy of the stabilization. In the LPS formulation, to obtain a nontrivial element-local projection that annihilates the full degree-$p$ polynomial space, one requires more nodes than the number of polynomial basis functions. Therefore, on an element with the minimum number of nodes, one is forced to relax the annihilation condition. A related restriction occurs in the minimally nodal spectral-element construction of Bercik \etal\ \cite{bercik2026stable}, where the use of the highest-degree nontrivial derivative operator forces the dissipation operator to act only on the highest-degree polynomial mode, and to sacrifice one order of accuracy. This loss of polynomial exactness can be compensated by employing a mesh-dependent coefficient to provide the design-order scaling. This alternative motivates the current work in which an explicit nonlinear construction supplies the required smooth-state decay without relying on high-order polynomial annihilation of the distribution operator.

The residual correction framework of Abgrall \cite{abgrall2018general,abgrall2022reinterpretation} and its artificial-dissipation interpretation by Edoh \cite{edoh2024conservative} provide the algebraic structure used in this work. Related correction procedures have been used in different contexts to enforce a prescribed entropy balance or inequality for a given base spatial residual, \eg, \cite{chan2025artificial,picklo2025entropy,alberti2024entropy}. In contrast, we use the same algebraic structure to prescribe an additional, separately designed dissipative term for under-resolved solutions. The assigned budget can therefore remain nonzero when the base discretization is already entropy conservative or entropy stable. The resulting term has zero conservative moment and nonpositive entropy contribution independent of the base method. Its addition preserves conservation and semidiscrete entropy stability whenever the base method satisfies these properties, but does not generally make a non-entropy-stable base method entropy stable.

The contribution of this paper is a targeted nonlinear dissipation that combines a conservative-variable distribution, a separately specified nonnegative entropy-viscosity budget, and an operator-normalized smoothness sensor. The implemented distribution is based on first derivatives, while the nonlinear activation supplies adjustable high-order decay under mesh refinement. A Hill function of the smoothness sensor selects the local dissipation order between prescribed low- and high-order values. For smooth admissible states on shape-regular, quasi-uniform meshes with affine element mappings, we establish consistency estimates with fixed positive regularization and identify parameter choices for which the added residual is of design-order. The construction is element local and independent of the form of the base spatial residual, allowing the same formulation to be used with continuous and discontinuous SBP discretizations and compatible Galerkin formulations. Most of the required reference-element matrices can be precomputed and reused.

Numerical experiments with the Burgers and Euler equations assess the proposed dissipation for continuous and discontinuous SBP discretizations. The two- and three-dimensional isentropic vortex tests examine smooth-solution accuracy, conservation, and entropy dissipation, while the Kelvin--Helmholtz tests assess robustness for under-resolved vortical flows. The Burgers, Sod, Shu--Osher, and two-dimensional Riemann problems assess the performance of the proposed dissipation for solutions containing shocks. With a kinetic-energy-preserving base volume flux, the Taylor--Green vortex tests assess the influence of the added dissipation on kinetic-energy evolution. The same dissipation parameter-selection rules are used across the test problems. 

The paper is organized as follows. \Cref{sec:preliminaries,sec:base_discretization} present the preliminaries and base discretizations. \Cref{sec:artificial_dissipation,sec:dissipation_properties} develop and analyze the proposed dissipation. Numerical results and conclusions follow in \cref{sec:numerical_experiments,sec:conclusion}. \Cref{app:alternative_choices,app:implementation_summary} provide connections with existing volume-dissipation schemes and an implementation summary.

\section{Preliminaries}\label{sec:preliminaries}
\subsection{Hyperbolic PDE and entropy identities}
Let \(\Omega\subset\mathbb{R}^{d}\) be a compact and connected domain in \(d\) dimensions with piecewise smooth boundary \(\partial\Omega\). We consider the hyperbolic initial-boundary-value problem 
\begin{equation}\label{eq:hyperbolic_pde}
	\begin{aligned}
		\pder[\fnb{U}]{t}
		+
		\sum_{i=1}^{d}
		\pder[\fnb{F}^{(i)}(\fnb{U})]{x_i}
		 & =
		\bm{0},
		 &   & \bm{x}\in\Omega,\quad 0<t\le t_f,
		\\
		\fnb{U}(\bm{x},0)
		 & =
		\fnb{U}_0(\bm{x}),
		 &   & \bm{x}\in\Omega,
		\\
		\fn{B}(\fnb{U})
		 & =
		\fnb{U}_{\Gamma},
		 &   & \bm{x}\in\partial\Omega,\quad 0<t\le t_f,
	\end{aligned}
\end{equation}
where \(\fnb{U}\in\mathbb{R}^{n_c}\) is the conservative state variable, \(n_c\) is the number of conservation-law components, \(\fnb{F}^{(i)}\) is the flux in the \(x_i\)-direction, and \(\fn{B}\) is a boundary operator. Periodic boundary conditions are included in \cref{eq:hyperbolic_pde} by identifying periodic traces and treating the boundary facets as interior interfaces.

Let \(\fn{S}(\fnb{U})\) be a strictly convex mathematical entropy on the set of admissible states. The entropy variables and the symmetric positive definite entropy-variable Jacobian are, respectively,
\begin{equation}\label{eq:entropy_variables}
	\fnb{W}^{T}
	\coloneqq
	\pder[\fn{S}]{\fnb{U}},
	\qquad
	\A_0(\fnb{U})
	\coloneqq
	\pder[\fnb{U}]{\fnb{W}}.
\end{equation}
For each spatial direction, the entropy flux, \(\fn{G}^{(i)}\), satisfies 
\begin{equation}\label{eq:entropy_flux_compatibility}
	\fnb{W}^{T}
	\pder[\fnb{F}^{(i)}]{\fnb{U}}
	=
	\pder[\fn{G}^{(i)}]{\fnb{U}}.
\end{equation}
Further discussion on the existence of such entropy-entropy flux pairs satisfying \cref{eq:entropy_flux_compatibility} and their role in nonlinear stability can be found, for instance, in \cite{harten1983symmetric,tadmor1987numerical,tadmor2003entropy}. If \(\bm{n}\) is the outward unit normal on \(\partial\Omega\), we denote the normal entropy flux by
\begin{equation}\label{eq:normal_entropy_flux}
	\fn{G}^{n}(\fnb{U};\bm{n})
	\coloneqq
	\sum_{i=1}^{d} n_i\fn{G}^{(i)}(\fnb{U}).
\end{equation}
For smooth solutions, premultiplying \cref{eq:hyperbolic_pde} by \(\fnb{W}^{T}\), integrating over \(\Omega\), and integrating by parts give \begin{equation}\label{eq:continuous_entropy_balance}
	\der[]{t}
	\int_{\Omega}\fn{S}(\fnb{U})\dd{\Omega}
	=
	-\int_{\partial\Omega}\fn{G}^{n}\dd{\Gamma}.
\end{equation}
For entropy solutions with shocks, the equality in \cref{eq:continuous_entropy_balance} is replaced by the corresponding inequality \cite{dafermos2005hyperbolic}. For periodic boundary conditions, the right-hand side vanishes because the boundary contributions cancel pairwise. Finally, the entropy potential, which plays a central role in the design of entropy stable discretizations, is given by
\begin{equation}\label{eq:entropy_potential}
	\psi^{(i)}
	\coloneqq
	\fnb{W}^{T}\fnb{F}^{(i)}-\fn{G}^{(i)}.
\end{equation}

\subsection{Mesh and SBP operators}
Let \(\mathcal{T}_{h}\coloneqq \{\{\Omega_k\}_{k=1}^{n_e}:\Omega=\cup_{k=1}^{n_e}\Omega_k\}\) be a conforming, quasi-uniform, and shape-regular (\eg see \cite{worku2026convergence} Appendix F) tessellation of \(\Omega\) with \(n_e\) elements. The boundary of \(\Omega_k\) is \(\Gamma_k\), the set of interior facets is denoted by \(\Gamma^{I}\), and the set of physical boundary facets is denoted by \(\Gamma^{B}\). We also use \(\Gamma_k^{I}\coloneqq\Gamma_k\cap\Gamma^{I}\). We assume that every element \(\Omega_k\) contains \(n_p\) volume nodes. The integer \(n_f\) denotes the number of nodes on a facet. The element length scale is defined as \(h\coloneqq\max_{\Omega_k\in\mathcal{T}_h} \sup_{\bm{x},\bm{y}\in\Omega_k} \|\bm{x}-\bm{y}\|_2\).

We use \(\bm{1}_k\in\mathbb{R}^{n_p}\) for the vector of ones on \(\Omega_k\). For a C-SBP discretization with \(n_g\) global scalar degrees of freedom, \(\bm{1}_g\in\mathbb{R}^{n_g}\) denotes the corresponding global vector of ones. We use \(\I_m\in\mathbb{R}^{m\times m}\) for the identity matrix. If \(\mathsf{X}\) acts on scalar nodal values, then \(\overline{\mathsf{X}}\coloneqq\mathsf{X}\otimes\I_{n_c}\) denotes its componentwise extension to variables in systems of equations. We define the following:
\begin{equation}\label{eq:identity_matrices}
	\begin{aligned}
	\overline{\I}_k	&\coloneqq \bm{1}_k\otimes\I_{n_c},
	&\qquad
	\overline{\I}_g	&\coloneqq \bm{1}_g\otimes\I_{n_c},
	&\qquad
	\overline{\bm{1}}_k	&\coloneqq \bm{1}_k\otimes\bm{1}_{n_c}.
	\end{aligned}
\end{equation}

On element \(\Omega_k\), a degree-\(p\) diagonal-norm SBP operator in the \(x_i\)-direction satisfies
\begin{equation}\label{eq:local_sbp_operator}
	\D_{x_i k}=\H_k^{-1}\Q_{x_i k},
	\quad
	\Q_{x_i k}=	\S_{x_i k}+	\frac{1}{2}\E_{x_i k},
	\quad
	\S_{x_i k} =-\S_{x_i k}^{T},
	\quad
	\E_{x_i k}=\E_{x_i k}^{T}.
\end{equation}
Equivalently, we have \(\Q_{x_i k}+\Q_{x_i k}^{T}=\E_{x_i k}\), which is known as the SBP property. Here, \(\H_k\) is a symmetric positive definite norm (mass) matrix, and in this paper, we exclusively consider operators with diagonal norm matrices. We also exclusively use SBP diagonal-\(\E\) operators, which are SBP operators with collocated volume and facet nodes. For sufficiently smooth functions, the derivative operator, \(\D_{x_i k}\), satisfies
\[
    \left[\D_{x_i k}\bm v_k\right]_j
    =
    \pder[\fn{V}]{x_i}(\bm{x}_{j})
    +
    \fn{O}(h^p), \qquad \forall \,\fn{V}\in C^{p+1},
\]
where \(\bm v_k\) denotes the nodal restrictions of \(\fn{V}\) on \(\Omega_k\) \cite{crean2018entropy}. We remark that the notation \(\fn{O}(h^p)\) should be interpreted componentwise when it is applied to vector-valued quantities. We define the element volume and effective nodal spacing by
\begin{equation}\label{eq:element_volume}
	V_k
	\coloneqq
	\bm{1}_k^T\H_k\bm{1}_k,
	\qquad
	h_{{\rm eff},k}
	\coloneqq
	\left(\frac{V_k}{n_p}\right)^{1/d}.
\end{equation}
The boundary matrix, \(\E\), has the decomposition
\begin{equation}\label{eq:local_sbp_boundary}
	\E_{x_i k}=	\sum_{\gamma\subset\Gamma_k}\R_{\gamma k}^{T}\B_{\gamma}
	\N_{x_i\gamma k}\R_{\gamma k},
\end{equation}
where \(\R_{\gamma k}\) extrapolates quantities on volume nodes to facet nodes, \(\B_\gamma\) contains reference facet quadrature weights, and \(\N_{x_i\gamma k}\) contains the surface-scaled outward normal component in the \(x_i\)-direction. We also define the diagonal surface-normal magnitude by
\[
	[|\N_{\gamma k}|]_{mm}
	\coloneqq	
	\left(\sum_{i=1}^{d}[\N_{x_i\gamma k}]_{mm}^{2}	\right)^{1/2},
	\qquad
	|\overline{\N}_{\gamma k}|
	\coloneqq
	|\N_{\gamma k}|\otimes\I_{n_c}.
\]

For C-SBP discretizations, \(\P_k\) restricts global degrees of freedom to \(\Omega_k\), while \(\P_k^T\) applies the corresponding prolongation operation. The global C-SBP operators are assembled from element contributions through these restriction matrices. For example, for scalar nodal values,
\begin{equation}\label{eq:csbp_oper_def}
	\H_g	\coloneqq\sum_{\Omega_k\in\mathcal{T}_h}\P_k^T\H_k\P_k,
	\qquad
	\Q_{x_i}\coloneqq\sum_{\Omega_k\in\mathcal{T}_h}\P_k^T\Q_{x_i k}\P_k,
	\qquad
	\D_{x_i}\coloneqq\H_g^{-1}\Q_{x_i}.
\end{equation}
Thus, shared continuous nodes are represented by one global degree of freedom, while the local quadrature and derivative contributions are accumulated through the assembly. For systems, the same construction is applied componentwise using the barred matrices. For D-SBP discretizations, the element vectors remain independent and are coupled by SATs or numerical fluxes on \(\Gamma^{I}\cup\Gamma^{B}\). 

The SBP operators are constructed on a reference element, denoted by \(\widehat{\Omega}\), and mapped to each physical element \(\Omega_k\). We denote the coordinates on \(\Omega_k\) by \(\bm{x}=(x_1,\ldots,x_d)^T\) and those on \(\widehat{\Omega}\) by \(\bm{\xi}=(\xi_1,\ldots,\xi_d)^T\). Further details on multidimensional SBP operators and their construction on curved elements can be found in \cite{hicken2016multidimensional,fernandez2014review,svard2014review,chen2017entropy,crean2018entropy}. Furthermore, the standard physical SBP scaling on quasi-uniform, shape-regular meshes is discussed in Appendix~F of \cite{worku2026convergence}. In this paper, we use only the dimensional sizes \([\H_k]=L^d\), \([\overline{\D}_{x_i k}]=L^{-1}\), and \([\overline{\Q}_{x_i k}]=[\overline{\E}_{x_i k}]=L^{d-1}\), where \([\Z]\) denotes the physical dimension of \(\Z\) and \(L\) is the length scale.

\section{Base spatial discretization}\label{sec:base_discretization}
The artificial dissipation developed below is added to the semi-discrete spatial residual after the base spatial discretization has been formed. The analysis focuses on conservative entropy-stable base discretizations, and in particular on Hadamard-form two-point flux SBP discretizations. These are the default Euler base discretizations used in the numerical experiments. We also use the artificial dissipation with other types of discretizations, namely the entropy-stable split-form discretization for the Burgers equation and the direct divergence-form discretization of the Euler equations for the two-dimensional Riemann problem.

For completeness, and to introduce notation, we first present the Hadamard-form two-point flux SBP discretizations on a periodic domain. We use the sign convention \(\dd \bm{u}/\dd t=\bm r(\bm u)\), so \(\bm r\) approximates \(-\sum_{i=1}^{d}\partial\fnb{F}^{(i)}/\partial x_i\). For the D-SBP form, and for an interior facet \(\gamma=\Gamma_k\cap\Gamma_v\), we define
\begin{equation}\label{eq:Ekv_two_point}
	\E_{x_i}^{kv}\coloneqq\R_{\gamma k}^{T}\B_{\gamma}\N_{x_i\gamma k}\R_{\gamma v},
	\qquad
	\E_{x_i}^{vk}=-\left(\E_{x_i}^{kv}\right)^{T}.
\end{equation}
Furthermore, let \(\fnb{F}^{*}_{i}(\fnb{U}_L,\fnb{U}_R)\) be a symmetric and consistent two-point flux function. Entropy conservation is obtained when such a flux satisfies Tadmor's entropy consistency condition~\cite{tadmor1987numerical},
\begin{equation}\label{eq:tadmor_two_point}
	\left(
	\fnb{W}_L-\fnb{W}_R
	\right)^{T}
	\fnb{F}^{*}_{i}(\fnb{U}_L,\fnb{U}_R)
	=
	\psi_L^{(i)}
	-
	\psi_R^{(i)} .
\end{equation}
Following the Hadamard notation of Crean et al.~\cite{crean2018entropy} to apply the two-point flux in discretizations, the \((m,n)\)-th \(n_c\times n_c\) block of the two-point flux matrix is defined as
\begin{equation}\label{eq:Fstar_matrix}
	\left[
		\F^{*}_{i}\left(\bm u_k,\bm u_v	\right)	\right]_{mn}\coloneqq	2\operatorname{diag}
	\left(
	\fnb{F}^{*}_{i}	\left(\bm u_k^{(m)},\bm u_v^{(n)}\right)
	\right).
\end{equation}
With this notation, the D-SBP element-local two-point flux residual is written as
\begin{equation}\label{eq:local_two_point_residual}
	\bm r_k	= -\overline{\H}_{k}^{-1}\sum_{i=1}^{d}
	\left( \overline{\S}_{x_i k}\circ\F^{*}_{i}(\bm u_k,\bm u_k)\right) \overline{\bm{1}}_k
	-
	\frac{1}{2}	\overline{\H}_{k}^{-1} \sum_{\gamma\in\Gamma_k^{I}}
		\sum_{i=1}^{d}\left(\overline{\E}_{x_i}^{kv}\circ\F^{*}_{i}(\bm u_k,\bm u_v)
	\right)
	\overline{\bm{1}}_k,
\end{equation}
where \(\circ\) denotes the Hadamard (elementwise) multiplication. Further detail on the Hadamard-form construction and entropy estimate can be found in
\cite{hicken2016multidimensional,chen2017entropy,crean2018entropy,shadpey2020entropy}. For C-SBP discretizations, the solution is single-valued at shared nodes; hence, no interface coupling or SAT is introduced. Instead, the global SBP operator is assembled from local contributions, and the element-boundary terms in \cref{eq:local_two_point_residual} cancel pairwise at interior interfaces. Thus, with \(\bm u_k=\overline{\P}_k\bm u_g\), the element-local volume term contribution is simply the first term on the right-hand side (RHS) of \cref{eq:local_two_point_residual},
\begin{equation}\label{eq:csbp_local_two_point_residual}
	\bm q_k	=-\sum_{i=1}^{d}\left(\overline{\S}_{x_i k}	\circ\F^{*}_{i}(\bm u_k,\bm u_k)
	\right)
	\overline{\bm{1}}_k,
\end{equation}
and the global residual is assembled as
\begin{equation}\label{eq:csbp_two_point_residual}
	\der[\bm u_g]{t}=	\bm r_g, \qquad \bm r_g	=\overline{\H}_g^{-1}	\sum_{\Omega_k\in\mathcal{T}_h}	\overline{\P}_k^{T}	\bm q_k.
\end{equation}
Physical-boundary SATs can be added in the usual way; they contribute the corresponding boundary entropy estimate.

The conservation and entropy stability discussions below only require that the selected residual be conservative and entropy stable, \ie, it need not be based on two-point fluxes. For C-SBP discretizations, conservation and entropy stability imply
\begin{equation}\label{eq:selected_entropy_estimate_csbp}
	\overline{\I}_g^{T}\overline{\H}_g\bm r_g	=\bm{0},
	\qquad
	\bm{w}_g^{T}\overline{\H}_g\bm r_g \le0,
\end{equation}
while for D-SBP discretizations, we have
\begin{equation}\label{eq:selected_entropy_estimate_dsbp}
	\sum_{\Omega_k\in\mathcal{T}_h}
	\overline{\I}_k^{T}\overline{\H}_k\bm r_k=\bm{0},
	\qquad
	\sum_{\Omega_k\in\mathcal{T}_h}	\bm{w}_k^{T}\overline{\H}_k\bm r_k	\le0.
\end{equation}
For boundary conditions with nonzero entropy input, the zero upper bounds in \cref{eq:selected_entropy_estimate_csbp,eq:selected_entropy_estimate_dsbp} are replaced by the corresponding boundary entropy estimate. For D-SBP discretizations, interface dissipation can be added as a separate conservative SAT in the form of
\begin{equation}\label{eq:interface_entropy_dissipation}
	\bm r_k^{\operatorname{ES}}	=\bm r_k-\frac{1}{2}\overline{\H}_{k}^{-1}
	\sum_{\gamma\in\Gamma_k^{I}}\overline{\R}_{\gamma k}^{T}
	\overline{\B}_{\gamma}\,\bm{\ell}_{\gamma k}^{n},
\end{equation}
where \(\bm{\ell}_{\gamma k}^{n}\) is the normal-flux dissipation on facet \(\gamma\), written with the outward normal of \(\Omega_k\). In the case of the scalar local Lax--Friedrichs dissipation, \(\bm{\ell}_{\gamma k}^{n}\) takes the form
\begin{equation}\label{eq:LLF_dissipation}
	\bm{\ell}_{\gamma k}^{n}= \lambda_{\gamma}|\overline{\N}_{\gamma k}|
	\left(\overline{\R}_{\gamma k}\bm u_k
	-
	\overline{\R}_{\gamma v}\bm u_v
	\right),
\end{equation}
where \(\lambda_{\gamma}\) is computed with the unit normal using the Davis estimate \cite{davis1988simplified}, while \(|\overline{\N}_{\gamma k}|\) supplies the surface-normal scaling.

The preceding SAT dissipation is natural for D-SBP discretizations because the left and right interface states are independent. C-SBP discretizations do not have such double-valued interface states and therefore do not provide the same interface-SAT mechanism for damping under-resolved modes. Existing approaches for C-SBP discretizations therefore use either gradient-jump-based edge dissipations or specially constructed volume dissipation operators. Crean \etal~\cite{crean2016investigation} investigated edge stabilization for multidimensional simplex SBP discretizations of the Euler equations. The method was effective in their tests, and edge-stabilization ideas can be formulated to enforce entropy dissipation; however, the method was found to have a reduced explicit time step restriction. Hicken \cite{hicken2020entropy} later applied local-projection stabilization (LPS) to the entropy variables, which gives an entropy-dissipative and well-conditioned C-SBP discretization. The cost is that the element-local construction requires additional degrees of freedom beyond the minimal degree-\(p\) polynomial space; in one dimension, a nontrivial element-local operator that annihilates \(\poly{p}\) uses at least \(p+2\) nodes. Stable volume dissipation of the type developed by Bercik, Craig Penner, and Zingg \cite{bercik2026stable} provides another systematic C-SBP-compatible option and can be made energy- or entropy-stable. However, similar to the LPS approach, for minimal one-dimensional spectral-element operators, either one degree of polynomial exactness must be relaxed, or the accuracy must be recovered through coefficient scaling or a nonlinear design-order functional. These approaches are important baselines, but often face trade-offs for elements with the minimum number of nodes. In contrast, the dissipation presented below uses a local smoothness sensor to control activation, allowing it to target a wide range of polynomial modes without compromising accuracy or substantially increasing the number of time steps required to reach the final time in the numerical comparisons considered.

\section{Nonlinear high-order artificial dissipation}\label{sec:artificial_dissipation}
Hyperbolic conservation laws may admit multiple weak solutions after shocks form. The physically relevant solution is selected by an admissibility mechanism, classically through entropy inequalities and vanishing-viscosity limits \cite{lax1973hyperbolic,friedrichs1971systems}. This viewpoint also explains the role of artificial dissipation in shock-capturing methods; namely, nonnegative numerical viscosity regularizes under-resolved gradients and biases the discrete solution toward the entropy solution. In this work, artificial dissipation is used in the same admissibility sense, but it is added as an entropy-budgeted term in the semi-discrete spatial residual. When the base spatial discretization is entropy stable, the added dissipation preserves that entropy stability. The continuous model is used to identify the proper sign, scale, and physical dimensions of the discrete budget. A scalar conservative-variable viscosity is
\begin{equation}\label{eq:physical_viscosity}
	\pder[\fnb{U}]{t}+\sum_{i=1}^{d}\pder[\fnb{F}^{(i)}]{x_i}
	=
	\sum_{i=1}^{d}\pder[]{x_i}\left(\nu	\pder[\fnb{U}]{x_i}	\right),
	\qquad \nu\ge0,
\end{equation}
which is entropy dissipative for any convex entropy, up to the corresponding viscous entropy flux through the boundary. Equivalently, using \(\A_0\), defined in \cref{eq:entropy_variables}, this viscosity can be rewritten as \begin{equation}\label{eq:continuous_A0_viscosity}
	\pder[\fnb{U}]{t} +\sum_{i=1}^{d}\pder[\fnb{F}^{(i)}]{x_i}
	=\sum_{i=1}^{d}	\pder[]{x_i}\left(\nu\A_0\pder[\fnb{W}]{x_i}\right).
\end{equation}
On a periodic domain, or for boundary conditions that make the entropy flux vanish, multiplying by \(\fnb{W}^T\), integrating over \(\Omega\), and integrating by parts gives
\begin{equation}\label{eq:continuous_A0_entropy_balance}
	\der[]{t}\int_{\Omega}\fn{S}(\fnb{U})\dd{\Omega}
	=
	-\sum_{i=1}^{d}\int_{\Omega}\nu	\left(\pder[\fnb{W}]{x_i}\right)^T\A_0
	\pder[\fnb{W}]{x_i}	\dd{\Omega}	\le0.
\end{equation}
We determine the scaling of the constant viscosity coefficient \(\nu\) in \cref{eq:physical_viscosity} by comparing the dimensions of the RHS and left-hand side (LHS) of the equation. Since the RHS contains two spatial derivatives, it has dimension \([\nu][U]/L^2\), which must match the first-order time derivative of the LHS with dimension \([U]/T\), where \(T\) denotes time. Thus, a dimensionally consistent viscosity has dimension \([\nu]=L^2/T\), which implies that
\begin{equation}\label{eq:hyperbolic_viscosity_scale}
	\nu\sim\lambda h,
\end{equation}
where \(\lambda\) is a characteristic speed and \(h\) is a local length scale. We note that, here and below, dimensional relations are interpreted componentwise for vector quantities, \eg, \([U]\) and \([W]\) denote the physical dimensions of corresponding components of \(\fnb{U}\) and \(\fnb{W}\), respectively.

\subsection{A general algebraic structure for entropy-stable artificial dissipation}\label{sec:entropy_stable_artificial_dissipation_form}
The algebraic structure of the proposed nonlinear artificial dissipation is motivated by the residual correction method of Abgrall \etal~\cite{abgrall2018general,abgrall2022reinterpretation}. We add an artificial dissipation term to the base spatial residual that is designed to have zero conservative moment and a prescribed entropy-production moment by construction.

Let \(\bm r_g\) be the base spatial residual with norm matrix \(\overline{\H}_g\), conservative state \(\bm u_g\), and entropy variables \(\bm w_g\). For C-SBP discretizations, \(\bm u_g\), \(\bm w_g\), and \(\bm r_g\) are assembled global vectors, and \(\overline{\P}_k\) restricts them to \(\Omega_k\). For D-SBP discretizations, these vectors are formed by stacking their element-local counterparts, and \(\overline{\P}_k\) extracts the \(k\)-th element block. In either case, we set \(\bm u_k=\overline{\P}_k\bm u_g\) and \(\bm w_k=\overline{\P}_k\bm w_g\). We add the element-local artificial dissipation, \(\bm d_k\), to the spatial residual as
\begin{equation}\label{eq:local_artificial_dissipation_update}
	\der[\bm u_g]{t}=\bm r_g+\overline{\H}_g^{-1}
	\sum_{\Omega_k\in\mathcal{T}_h}
	\overline{\P}_k^T\bm d_k,
	\qquad
	\bm d_k	\coloneqq - \bm\alpha_{k,\epsilon}\varepsilon_k,
\end{equation}
where \(\bm\alpha_{k,\epsilon} (\bm u_k)\) is the local distribution vector, controlling how the artificial dissipation is applied in each element, and \(\varepsilon_k(\bm u_k)\) is the scalar amount of entropy dissipation assigned to \(\Omega_k\), controlling the total amount of dissipation applied in each element. The subscript \(\epsilon\) indicates that the entropy normalization is regularized by \(\epsilon_{\rm mach}V_k^2\) to avoid division by zero, where \(\epsilon_{\rm mach}>0\) denotes machine precision. 

For the artificial dissipation to preserve the semidiscrete conservation and entropy-stability conditions \eqref{eq:selected_entropy_estimate_csbp} and \eqref{eq:selected_entropy_estimate_dsbp}, we construct a nonnegative entropy-dissipation budget, \(\varepsilon_k\geq0\), and a distribution vector, \(\bm\alpha_{k,\epsilon}\), satisfying
\[
	\overline{\I}_k^T\bm\alpha_{k,\epsilon}=\bm0,
	\qquad
	0 \leq \bm w_k^T\bm\alpha_{k,\epsilon} \leq 1.
\]
We guarantee the satisfaction of this second condition (a bounded entropy-moment) by setting \(\bm\alpha_{k,\epsilon}\) to be of the following form,
\begin{align}
	\bm\alpha_{k,\epsilon}
	&\coloneqq
	\frac{b_k\bm a_k}{b_k^2+\epsilon_{\rm mach}V_k^2}
	=
	\frac{(b_k/V_k)(\bm a_k/V_k)}{(b_k/V_k)^2+\epsilon_{\rm mach}}, \quad	
    \epsilon_{\rm mach}>0, \label{eq:local_alpha_definition}
	\\
	 b_k&\coloneqq\bm w_k^T\bm a_k, \label{eq:b_k}
\end{align}
where \(\bm a_k\) is a yet-to-be-determined vector that controls the spatial distribution of the dissipation within the element.
The equivalent form in \cref{eq:local_alpha_definition} shows that the regularization is applied to the volume-normalized entropy moment \(b_k/V_k\). 
The normalization by $V_k$ comes from $b_k$ approximating an integral quantity, which will become clear following the choices made in \cref{sec:distribution-vector}. In particular, in \cref{sec:smooth_scaling} we will show that under mesh refinement, \(\bm a_k=\fn{O}(h^{d-1})\). Then if \(\bar{\bm w}_k\) is a representative constant entropy-variable state on \(\Omega_k\), since \(\overline{\I}_k^T\bm a_k=\bm0\) and \(\bm w_k-\overline{\I}_k\bar{\bm w}_k=\fn{O}(h)\), we obtain
\[
	b_k
	=
	(\bm w_k-\overline{\I}_k\bar{\bm w}_k)^T\bm a_k
	=
	\fn{O}(h)\fn{O}(h^{d-1})
	=
	\fn{O}(h^d).
\]
Hence, on a shape-regular, quasi-uniform mesh family, \(b_k/V_k=\fn{O}(1)\). This prevents the regularizer $\epsilon_{\rm mach}$ from becoming dominant solely because the element volume decreases under refinement. Conversely, if \(b_k/V_k\) is genuinely small (\eg near a locally constant state where $\bm w_k \approx \overline{\I}_k\bar{\bm w}_k$), the distribution is smoothly attenuated.

The scalar element-local entropy budget is defined as
\begin{equation}\label{eq:epsilon_k}
	\varepsilon_k
	\coloneqq
	\kappa_{\rm diss}\theta_k^{\beta_k}b_{\nu,k},
	\qquad
	\kappa_{\rm diss}>0,
\end{equation}
 where \(\theta_k\in[0,1]\) is a smoothness sensor that estimates the under-resolved content in the element, \(b_{\nu,k}\geq0\) is an entropy-viscosity factor, and \(\beta_k\) is the sensor-dependent exponent, while \(\kappa_{\rm diss}>0\) is an optional parameter that controls the overall dissipation magnitude; we use \(\kappa_{\rm diss}=1\). The entropy budget should approximate the entropy-viscosity of \eqref{eq:continuous_A0_entropy_balance} and therefore should implicitly contain an approximation to $\nu$ following \eqref{eq:hyperbolic_viscosity_scale}.
 
 The aim is to damp solution variations that are inadequately resolved by the spatial discretization, while minimizing dissipation of resolved structures. The following subsections specify four independent components:
\begin{enumerate}
    \item the distribution vector \(\bm\alpha_{k,\epsilon}\), which through $\bm{a}_k$, controls where dissipation is applied within each element (\cref{sec:distribution-vector});
    \item the sensor \(\theta_k\), which uses local reference-derivative content as a proxy for under-resolution to control activation (\cref{sec:volume_smoothness_sensor});
    \item the exponent \(\beta_k\), which adaptively modifies the selectivity of \(\theta_k\) and its convergence rate under mesh refinement (\cref{sec:activation_regime_indicator});
    \item the entropy-viscosity factor \(b_{\nu,k}\), which sets the budget scale, with \(\kappa_{\rm diss}\) controlling its overall magnitude (\cref{sec:entropy_viscosity_budget}).
\end{enumerate}

\subsection{Distribution vector} \label{sec:distribution-vector}

In this work, we set \(\bm a_k\) to mimic the spatial distribution of typical finite-difference volume dissipations (e.g.\,\cite{bercik2026stable}). For \(s\geq1\), we define the following reference matrices recursively by
\begin{equation}\label{eq:reference_smoothness_matrix}
	\hat{\M}_0
	\coloneqq
	\overline{\H}_{\xi},
	\qquad
	\hat{\M}_{s}
	\coloneqq
	\sum_{i=1}^{d}
	\overline{\D}_{\xi_i}^{T}
	\hat{\M}_{s-1}
	\overline{\D}_{\xi_i}.
\end{equation}
In addition to derivatives along each coordinate direction, here we have deliberately set \(\hat{\M}_s\) to contain all mixed derivatives of total order \(s\). It is symmetric positive semidefinite and satisfies \(\hat{\M}_s\overline{\I}_k=\bm0\) for \(s\geq1\). We use this matrix family with derivative orders \(s_a=1\) for the distribution vector~\(\bm a_{k}\), and \(s_\theta\) for the smoothness sensor introduced later. We then define the unscaled distribution vector as
\begin{equation}\label{eq:scaled_reference_distribution_a}
	\bm a_k
	\coloneqq
	\M_k\bm u_k,
	\qquad
	\M_k
	\coloneqq
	\mu_k\hat{\M}_{s_a}
	=
	\mu_k\hat{\M}_1.
\end{equation}
The factor \(\mu_k\) gives \(\M_k\) the same physical scaling as \(\sum_{i=1}^d\overline{\D}_{x_i k}^T\overline{\H}_k\overline{\D}_{x_i k}\) and is defined by
\begin{equation}\label{eq:distribution_scaling_factor}
	\mu_k
	\coloneqq
	\left(\frac{V_k}{\widehat V}\right)^{(d-2)/d},
	\qquad
	\widehat V
	\coloneqq
	\bm{1}_k^T\H_\xi\bm{1}_k.
\end{equation}
The normalization by \(\widehat V\) removes dependence on the size of the chosen reference element. We deliberately construct \(\bm a_k\) from conservative variables and evaluate its entropy moment, \(b_k=\bm w_k^T\bm a_k\), using entropy variables. In~\cite{bercik2026stable}, derivatives are applied to the entropy variables because \eqref{eq:scaled_reference_distribution_a} then naturally forms a positive definite quadratic quantity upon contraction with $\bm w_k^T$ from the left when computing the entropy moment. However, entropy variables are not always the best variables from which to build the distribution vector. For the Euler equations, entropy variables contain factors involving the inverse of pressure. Thus, as the pressure becomes small, applying \(\M_k\) directly to \(\bm w_k\) can produce very large distribution coefficients. Even when the scalar entropy moment is fixed, this can increase pointwise update magnitudes and make the assembled operator stiff. A related warning appears in \cite{derigs2017novel} indicating that dissipation based on entropy-variable jumps can break down in demanding pressure-jump tests when the entropy Jacobian average is chosen naively, producing negative densities and pressures. Although the quantities \(b_k\) and \(\varepsilon_k\) use entropy variables, they are scalar entropy moments, not raw entropy-variable distribution vectors. Thus, they do not affect a single node as a distribution vector would. Despite the use of mixed variables in the construction of the dissipation coefficients, we note that the complete update is dimensionally consistent, as discussed further in \cref{sec:dimensional_consistency}. 

\begin{remark}\label{rem:graph_distribution_safeguard}
The derivative-based distribution can have a vanishing entropy moment for nonconstant admissible states. In this case, the regularized formulation suppresses the added dissipation while retaining conservation and semidiscrete entropy stability. A graph-Laplacian construction introduced in the companion residual-correction study \cite{workurcm} provides an alternative distribution vector whose entropy moment is positive for nonconstant admissible states and vanishes only for constant states, provided that the entropy is differentiable and strictly convex on a convex admissible set. An optional safeguard is to replace the derivative-based distribution vector by this graph-Laplacian distribution vector whenever \(b_k^2<C_b\epsilon_{\rm mach}V_k^2\), with \(C_b>1\), for example \(C_b=100\), and recompute \(b_k\) before evaluating the distribution coefficient in \cref{eq:local_alpha_definition}. In exploratory comparisons (not shown here), using the graph-Laplacian distribution improved robustness for the Kelvin--Helmholtz problem, but produced stronger attenuation of the post-shock density waves in the Shu--Osher problem and greater kinetic-energy decay in the Taylor--Green vortex problem than the derivative-based construction. These observations indicate a problem-dependent tradeoff between robustness and resolution. We therefore retain the derivative-based distribution in this study. The conditional graph safeguard is not used in the numerical experiments reported here.
\end{remark}

\subsection{Solution smoothness sensor}\label{sec:volume_smoothness_sensor}
The next ingredient we address is a bounded nondimensional sensor that preferentially activates the entropy dissipation budget in under-resolved regions. The sensor should be element local, should vanish on constant states, and decay at high order under mesh refinement for smooth solutions. Once again, there are several forms that such a $\theta_k$ could take to satisfy these three conditions, and there is a broad literature regarding the development of such sensors, \eg~\cite{persson2006subcell, klockner2011viscous, guermond2011entropy, hennemann2021provably, ducros_sensor,ivan2014high}. Here, we develop a novel sensor, loosely inspired once more by typical volume dissipation, that we have found to perform well in numerical experiments.

For systems, the conservative variables have different physical dimensions and can have very different magnitudes. A quadratic form applied directly to \(\bm u_k\) would therefore depend on the choice of units and could be dominated by one component for dimensional rather than smoothness reasons. We avoid this by scaling the conservative variables before applying the reference smoothness matrix. Let
\begin{equation}\label{eq:state_scaling_matrix}
	\mathsf{C}_{k}^{u} \coloneqq\I_{n_p}\otimes	\operatorname{diag}
	\left(c_{1,k},\ldots,c_{n_c,k}\right),
	\qquad c_{\ell,k}>0,
\end{equation}
be an element-constant scaling matrix with the physical dimensions of the conservative components. Define the dimensionless scaled state
\begin{equation}\label{eq:scaled_state}
	\widetilde{\bm u}_k	\coloneqq
	\left(\mathsf{C}_{k}^{u}\right)^{-1} \bm u_k.
\end{equation}
The matrix \(\mathsf{C}_{k}^{u}\) is held constant on \(\Omega_k\). Thus, the sensor measures the same elementwise variation that \(\hat{\M}_{s_\theta}\) would ``see'' in the conservative variables, but after nondimensionalizing the components. In particular, the scaling changes the relative weights of the components but does not change the polynomial degree detected by \(\hat{\M}_{s_\theta}\). For the Euler equations with conservative variables
\[
	\fnb{U}	=\left(\rho,\rho v_1,\ldots,\rho v_d,e\right)^T,
\]
a direct implementation can use the element-average state to set
\begin{equation}\label{eq:euler_state_scales}
	c_{\rho,k}=\bar\rho_k,
	\qquad
	c_{\rho v_i,k}=c_{\rho,k}z_k,
	\qquad
	c_{e,k}=c_{\rho,k}z_k^2,
	\qquad
	z_k=|\bar{\bm v}_k|+\bar a_k,
\end{equation}
where \(\bar a_k=\sqrt{\gamma\bar p_k/\bar\rho_k}\) is the sound speed of the element-average state and \(|\bar{\bm v}_k|=(\sum_{i=1}^d\bar v_{i,k}^2)^{1/2}\). Since the element-average state is a positive-weight average of admissible Euler states, it is admissible, and hence \(\bar\rho_k>0\) and \(\bar p_k>0\). Therefore, \(\bar a_k>0\), \(z_k>0\), and \(c_{\ell,k}>0\).

We design the sensor to compare the amount of high-order reference-derivative content of the scaled state with the magnitude of the scaled state. For this purpose, we use the reference matrix \(\hat{\M}_{s_\theta}\) from \cref{eq:reference_smoothness_matrix}. The two dimensionless quantities used for this comparison are
\begin{align}
	S_k & \coloneqq \widetilde{\bm u}_k^T\hat{\M}_{s_\theta}\widetilde{\bm u}_k,
	\label{eq:volume_sensor_numerator}
	\\
	N_k & \coloneqq	\widetilde{\bm u}_k^T\overline{\H}_\xi\widetilde{\bm u}_k.
	\label{eq:volume_sensor_reference_norm}
\end{align}
A ``raw'' sensor can then be set as the bounded ratio
\begin{equation}\label{eq:volume_sensor}
	\vartheta_k	\coloneqq\frac{S_k}{S_k+N_k+\epsilon_{\rm mach}},
	\qquad
	\epsilon_{\rm mach}>0.
\end{equation}
Since \(S_k\) and \(N_k\) are constructed from the dimensionless scaled state using reference-element operators, they contain no physical element-volume factor; therefore, \(\epsilon_{\rm mach}\) enters without additional scaling by \(V_k\) and prevents division by zero. Since \(S_k\ge0\) and \(N_k\ge0\), the raw sensor satisfies
\begin{equation}\label{eq:volume_sensor_raw_bounds}
	0\le\vartheta_k<1 .
\end{equation}
It switches off on constants because \(S_k=0\). In smooth nonconstant regions, a Taylor expansion under an affine mapping, together with polynomial exactness, shows that applying a sequence of \(s_\theta\) reference derivative operators to \(\widetilde{\bm u}_k\) gives an \(\fn{O}(h^{s_\theta})\) term. Therefore, the product of two such terms in \eqref{eq:volume_sensor_numerator} gives
\begin{equation}\label{eq:volume_sensor_smooth_scaling}
	S_k	=\fn{O}\!\left(h^{2s_\theta}\right),
	\qquad
	N_k=\fn{O}(1),
\end{equation}
where the estimate for \(N_k\) follows from boundedness of the scaled state, and hence \(\vartheta_k=\fn{O}\big(h^{2s_\theta}\big)\) in smooth regions.
In contrast, under-resolved volume data can make \(\vartheta_k=\fn{O}(1)\).

\subsubsection{Operator-normalization}\label{sec:operator_normalized_volume_sensor}

The bounded raw sensor \(\vartheta_k\) is useful, but its value depends on the spectral radius of \(\hat{\M}_{s_\theta}\), which can vary with \(p\), \(s_\theta\), and nodal distribution, and make the dissipation difficult to calibrate. To mitigate this, we first define
\begin{equation}\label{eq:Lambda_s}
	\Lambda_{s_\theta} \coloneqq \lambda_{\max} \left(\hat{\M}_{s_\theta}, \overline{\H}_\xi\right)
	=
	\max_{\bm z\ne\bm0} \frac{\bm z^T\hat{\M}_{s_\theta}\bm z} {\bm z^T\overline{\H}_\xi\bm z},
\end{equation}
which is a dimensionless and positive quantity. For each reference operator, polynomial degree \(p\), and sensor order \(s_\theta\), this small symmetric generalized eigenproblem is solved once and the resulting value is reused. The operator-normalized sensor is then defined as the Rayleigh quotient \(S_k/N_k\) divided by its upper bound, \(\Lambda_{s_\theta}\). We obtain
\begin{equation}\label{eq:volume_sensor_operator_normalized}
	\bar{\vartheta}_k \coloneqq	\frac{S_k}{\Lambda_{s_\theta} N_k+\epsilon_{\rm mach}}.
\end{equation}
Since \(S_k\le\Lambda_{s_\theta} N_k\), the normalized sensor satisfies \(0\le\bar{\vartheta}_k<1\) and removes the operator-dependent scaling of \(S_k\) induced by \(\hat{\M}_{s_\theta}\).
\subsubsection{Removing the constant state}

As we have written in \eqref{eq:volume_sensor_operator_normalized}, the sensor measures the relative strength of high-order solution derivatives. The total energy, as measured by \(N_k\), includes both the fluctuation and mean flow. This can make the sensor insensitive to small under-resolved perturbations riding on a large constant state. We therefore further modify \eqref{eq:volume_sensor_operator_normalized} to use a normalization state with the mean flow removed, as measured by the same discrete inner product used in the sensor. Related normalizations based on deviations from an average are used in the existing indicators of~\cite{klockner2011viscous,guermond2011entropy,ivan2014high}.

In the reference norm, the componentwise constants are spanned by \(\overline{\I}_k\), so we use the \(\overline{\H}_\xi\)-orthogonal projection of \(\widetilde{\bm u}_k\) onto this constant space to compute the mean as
\[
	\bar{\widetilde{\bm u}}_k
	= \argmin_{\bm v\in\mathbb{R}^{n_c}}
	\|\widetilde{\bm u}_k-\overline{\I}_k\bm v\|_{\H_\xi}^2.
\]
Taking the Fr{\'e}chet derivative with respect to \(\bm v\), and applying the first-order optimality condition gives \(\overline{\I}_k^{T}\overline{\H}_\xi\left(\widetilde{\bm u}_k-\overline{\I}_k\bm v\right)=\bm0\). Hence, we compute the reference mean and fluctuation vectors as
\begin{align}
	\bar{\widetilde{\bm u}}_k & \coloneqq
	\left(\overline{\I}_k^{T}\overline{\H}_\xi\overline{\I}_k\right)^{-1}
	\overline{\I}_k^{T}\overline{\H}_\xi\widetilde{\bm u}_k,
	\label{eq:scaled_state_reference_mean}
	\\
	\widetilde{\bm u}'_k & \coloneqq \widetilde{\bm u}_k
	-\overline{\I}_k\bar{\widetilde{\bm u}}_k.
	\label{eq:scaled_state_fluctuation}
\end{align}
At the minimizer \(\bm v=\bar{\widetilde{\bm u}}_k\), the optimality condition gives \(\overline{\I}_k^{T}\overline{\H}_\xi\widetilde{\bm u}'_k=\bm0\). Therefore, the fluctuation $\widetilde{\bm u}'_k$ has zero discrete mean, meaning the element-average component of the scaled state has been removed. Note that the numerator $S_k$ in \eqref{eq:volume_sensor_operator_normalized} can equivalently be computed using \(\widetilde{\bm u}'_k\) since \(\hat{\M}_{s_\theta}\overline{\I}_k=\bm0\). Let \(L_{\rm ref}>0\) be a fixed reference length used to nondimensionalize the spatial coordinates. For \(p\geq2\), we define the sensor used in the dissipation budget, \cref{eq:epsilon_k}, and its squared fluctuation norm as
\begin{align}
	\theta_k
	&\coloneqq
	\frac{(\widetilde{\bm u}'_k)^T \hat{\M}_{s_\theta} \widetilde{\bm u}'_k}
	{\Lambda_{s_\theta}\left[N'_k+\widehat V(h_{{\rm eff},k}/L_{\rm ref})^2\right]}
	=
	\frac{S_k}
	{\Lambda_{s_\theta}\left[N'_k+\widehat V(h_{{\rm eff},k}/L_{\rm ref})^2\right]},
	\label{eq:volume_sensor_operator_normalized_fluctuation}
	\\
	N_k'
	&\coloneqq
	(\widetilde{\bm u}'_k)^T\overline{\H}_\xi\widetilde{\bm u}'_k.
	\label{eq:volume_sensor_fluctuation_norm}
\end{align}
The added term in the denominator is the squared reference-quadrature norm of a constant field with magnitude \(h_{{\rm eff},k}/L_{\rm ref}\). It sets a threshold at which oscillations can safely be ignored, roughly the scale at which the fluctuations match the effective nodal spacing. This keeps the denominator positive, replacing the need for $\epsilon_{\rm mach}$.
For the numerical experiments, the spatial coordinates are nondimensional and we take \(L_{\rm ref}=1\). The generalized-eigenvalue bound gives \(S_k\leq\Lambda_{s_\theta}N'_k\); hence, the sensor satisfies
\begin{equation}\label{eq:volume_sensor_bounds}
	0\le\theta_k<1 .
\end{equation}
\begin{remark}\label{rem:relative_roughness_sensor}
    The sensor \(\theta_k\) measures local oscillatory variation in the scaled conservative state relative to the local grid resolution. It is not a physics-based classifier and does not separately identify shocks, shear layers, under-resolved physical structures, or oscillations of purely numerical origin. Instead, it responds to all such features through the local variation measured by \(\hat{\M}_{s_\theta}\).
\end{remark}

For fixed \(p\), \(L_{\rm ref}\), and reference operators, shape regularity gives \(h_{{\rm eff},k}=\fn{O}(h)\). Recalling that \(S_k=\fn{O}(h^{2s_\theta})\), we can write
\begin{equation}\label{eq:regularized_sensor_smooth_bound}
	0
	\le
	\theta_k
	\le
	\frac{S_k}{\Lambda_{s_\theta}\widehat V(h_{{\rm eff},k}/L_{\rm ref})^2}
	=
	\fn{O}(h^{2s_\theta-2}).
\end{equation}
If \(N'_k=0\), the generalized-eigenvalue bound gives \(S_k=0\), while the mesh-dependent term keeps the denominator positive, and therefore \(\theta_k=0\). For \(p\geq2\), we use
\begin{equation}\label{eq:s_theta}
	s_\theta=\left\lceil\frac{p}{2}\right\rceil+1.
\end{equation}
Therefore, the construction considered here applies for \(p\geq2\), and \cref{eq:regularized_sensor_smooth_bound} gives \(\theta_k=\fn{O}(h^{2\lceil p/2\rceil})\) in smooth solution regions. For \(p=1\), we use the normalized operator given in \cref{eq:volume_sensor_operator_normalized}.

\begin{remark}\label{rem:entropy_based_volume_sensors}
	The same sensor construction can also be applied to other thermodynamic quantities such as density, pressure, or entropy in the context of the Euler equations. For the entropy, for instance, we can define
	\[
		[\bm \sigma_k]_{m} \coloneqq	\log\!\left(\frac{p_{k,m}}{\rho_{k,m}^{\gamma}}\right),
			\qquad 	m\in \{1,\dots,n_p\},
	\]
	and use either \(\bm \sigma_k\) or its mean-subtracted fluctuation in place of the scaled conservative state in the sensor. The Rayleigh-quotient bound still gives a sensor between zero and one. This alternative is not used in the numerical experiments.
\end{remark}

\subsubsection{Adaptive dissipation order}\label{sec:activation_regime_indicator}

In the entropy budget \eqref{eq:epsilon_k}, the sensor $\theta_k$ is raised to the power $\beta_k$ to exaggerate the activation, such that regions of $\theta_k \sim 1$ remain strongly dissipated, while regions with $\theta_k \ll 1$ receive vanishingly small dissipation under mesh refinement. It may be inappropriate, however, to expect the dissipation applied to very small activations (\eg in regions of smooth flow) to decay at the same rate as small but important activations in under-resolved regions of the flow (\eg dissipating small-scale structures below mesh resolution). Therefore, we introduce a mechanism to separately scale the sensor based on these two separate regimes.
To distinguish weak from strong sensor responses without evaluating a separate indicator, we map \(\theta_k\) through the Hill function
\begin{equation}\label{eq:theta_exponent_indicator}
	\chi_k
	\coloneqq
	\frac{\theta_k^{m_\theta}}
	{\theta_k^{m_\theta}+c_\theta^{m_\theta}},
	\qquad
	 0<c_\theta<1,
	\quad
	m_\theta>0.
\end{equation}
The indicator satisfies \(0\leq\chi_k\leq1\) and \(\chi_k=1/2\) when \(\theta_k=c_\theta\); hence, \(c_\theta\) is the transition value and \(m_\theta\) controls its sharpness. For a smooth state, \cref{eq:regularized_sensor_smooth_bound} gives
\begin{equation}\label{eq:theta_indicator_smooth_bound}
	0\leq\chi_k
	\leq
	\left(\frac{\theta_k}{c_\theta}\right)^{m_\theta}
	=
	\fn{O}\!\left(h^{(2s_\theta-2)m_\theta}\right).
\end{equation}

Let \(p_{\rm L}\) and \(p_{\rm H}\) be prescribed low and high dissipation-decay orders, respectively. We define the element-local order and sensor exponent by
\begin{align}
	p_k
	&\coloneqq
	(1-\chi_k)p_{\rm H}+\chi_kp_{\rm L},
	\qquad 0<p_{\rm L}\leq p_{\rm H},
	\label{eq:local_dissipation_order}\\
	\beta_k
	&\coloneqq\frac{p_k}{2s_\theta-2}.
	\label{eq:sensor_exponent}
\end{align}
Thus, \(\theta_k\ll c_\theta\) gives \(p_k\approx p_{\rm H}\), whereas \(\theta_k\gg c_\theta\) gives \(p_k\approx p_{\rm L}\). By \cref{eq:regularized_sensor_smooth_bound}, this gives an adaptive sensor response of $\theta_k^{\beta_k} = \order{h^{p_k}}$.

This Hill function indicator separates three distinct regions: shocks and very large oscillations, where $\theta_k^{\beta_k} = \order{1}$; small but non-negligible oscillations, where $\theta_k^{\beta_k} = \order{h^{p_{\rm L}}}$; and very smooth oscillations requiring little dissipation, where $\theta_k^{\beta_k} = \order{h^{p_{\rm H}}}$. To ensure design order, in this work we set $p_{\rm L} = p$, but take inspiration from the dissipation arising from upwind SATs, which scales as $\order{h^{2p+1}}$, to set $p_{\rm H} = 2p$.

\subsection{Entropy-viscosity budget}\label{sec:entropy_viscosity_budget}

One can define a local entropy-viscosity budget that directly approximates the continuous counterpart in \cref{eq:continuous_A0_entropy_balance} through
\begin{equation}\label{eq:bnu}
	\widetilde{b}_{\nu,k} \coloneqq \sum_{i=1}^{d}
	\left(\overline{\D}_{x_i k}\bm w_k\right)^T
	\overline{\H}_k	\operatorname{diag}{\left(\bm{\nu}_{i,k} \right)} \A_{0,k} \left(	\overline{\D}_{x_i k}\bm w_k\right),
\end{equation}
where $\bm{\nu}_{i,k}$ is an approximation to the scalar viscosity coefficient $\nu$ in direction $x_i$ at each node, and \(\A_{0,k}\) is a symmetric positive definite block diagonal matrix containing an approximation of \(\partial\fnb{U}/\partial\fnb{W}\) at each node. We denote this quantity as the ``physical'' reference budget, against which a cheaper budget below is compared. Note this approximates a scalar viscosity through the use of a scalar $\nu$; we defer investigations of more sophisticated matrix viscosities (\eg as in \cite{bercik2026stable}) to future work.

\subsubsection{Cheaper entropy-viscosity budget}\label{sec:cheaper_entropy_viscosity_budget}
The entropy budget in \cref{eq:bnu} requires physical derivative matrices, a nodal approximation to the viscosity coefficient $\nu$, and a nodal block diagonal matrix \(\A_{0,k}\). However, we can argue that the magnitude of this budget is arbitrary, as we do not know the exact value of \(\nu > 0\) in the continuous formulation, \cref{eq:physical_viscosity}. This motivates a simplified and cheaper construction of \(\widetilde{b}_{\nu,k}\) that retains a similar structure. 

Our first approximation is to use a constant coefficient $\nu_k$ in place of a nodal vector. Recalling the scaling \eqref{eq:hyperbolic_viscosity_scale}, we define
\[
\nu_k \coloneqq \lambda_k h_{{\rm eff},k},
\]
where \(h_{{\rm eff},k}\) is defined in \cref{eq:element_volume} and \(\lambda_k\) is an element-wise upper bound on the characteristic speed. The product \(\lambda_k h_{{\rm eff},k}\) is an elementwise approximation for a nodal viscosity scale; it preserves the required units and mesh-size scaling but not directional geometric information. The second approximation is to similarly use a constant entropy-Jacobian matrix based on a single cell-average state,
\[
	\bar{\A}_{0,k} \coloneqq \A_0(\bar{\bm u}_k), \qquad \bar{\bm u}_k \coloneqq	\frac{1}{V_k}\overline{\I}_k^T \overline{\H}_k \bm u_k .
\]
Finally, we compute derivatives directly on the reference element instead of using physical operators, yielding the following cheap approximation to \cref{eq:bnu}:
\begin{equation}\label{eq:bnu_cheap}
	b_{\nu,k}	\coloneqq \lambda_k h_{{\rm eff},k} \mu_k \sum_{i=1}^{d} \left(\overline{\D}_{\xi_i}\bm w_k \right)^T \left( \H_\xi\otimes\bar{\A}_{0,k}\right)
	\left( \overline{\D}_{\xi_i}\bm w_k	\right),
\end{equation}
where the scaling factor, \(\mu_k\), is defined in \cref{eq:distribution_scaling_factor}.

This approximation is nonnegative because \(\bar{\A}_{0,k}\) is symmetric positive definite, and it vanishes for constant entropy variables. It also has the same physical dimensions as the entropy production budget in \cref{eq:epsilon_k} and \cref{eq:bnu}, \ie,
\begin{equation} \label{eq:epsilon_k_units}
    [b_{\nu,k}] =	\frac{[W]\,[U]\, L^{d}}{T}. 
\end{equation}
We use \cref{eq:bnu_cheap} for all numerical results in this paper, and therefore reserve the notation \(b_{\nu,k}\) for this implemented budget. In implementation, \cref{eq:bnu_cheap} only requires applying the fixed reference derivative matrices to \(\bm w_k\), applying the small \(n_c\times n_c\) matrix \(\bar{\A}_{0,k}\) at each volume node, and reducing over nodes and coordinate directions.

\begin{remark} \label{rem:ref2phys}
    Here we have chosen to explicitly track the reference-to-physical scaling through the use of constants $h_{{\rm eff},k}$ and $\mu_k$ in each element. For highly skewed elements, it may instead be desirable to directly use the metric terms at each node, \(\fn{J}_k\,\partial\xi_i/\partial x_j\), where \(\fn{J}_k\coloneqq\det\!\left(\partial x_i/\partial\xi_j\right)\), providing a natural local scaling. For example, using
    \[
    \nu_{i,k} = \left\vert \sum_j \lambda_{x_j} \left( \fn{J}_k \pder[\xi_i]{x_j} \right) \right\vert
    \]
    at any node to approximate the viscosity coefficient along direction $\xi_i$ with characteristic speeds in directions $x_j$ yields the same units and $h$-scaling as \eqref{eq:bnu_cheap} without requiring explicit terms for $h_{{\rm eff},k}$ and $\mu_k$.
\end{remark}

\section{Properties of the dissipation operator}\label{sec:dissipation_properties}

We next establish the dimensional consistency, high-order accuracy in smooth regions, conservation, and entropy stability of the proposed dissipation operator.

\subsection{Dimensional consistency}\label{sec:dimensional_consistency}
The state scaling in \cref{eq:scaled_state} and the length ratio \(h_{{\rm eff},k}/L_{\rm ref}\) make \(\theta_k\), \(\chi_k\), \(p_k\), and \(\beta_k\) dimensionless. Taking $\varepsilon_k$ as defined in \eqref{eq:epsilon_k}, and using the result of \eqref{eq:epsilon_k_units} together with the fact that $\kappa_{\rm diss}$ and $\theta_k$ are dimensionless, we obtain
\begin{equation}\label{eq:epsilon_dimension}
	[\varepsilon_k]	= [b_{\nu,k}] = [W][U]L^d/T.
\end{equation}
Taking $\bm\alpha_{k,\epsilon}$ as defined in \eqref{eq:local_alpha_definition}, regardless of the dimensions of $\bm a_k$ in \eqref{eq:scaled_reference_distribution_a}, we obtain
\[
	[\bm\alpha_{k,\epsilon}]=1/[W].
\]
Therefore, overall for the dissipation as defined in \eqref{eq:local_artificial_dissipation_update} we obtain
\[
	\left[\overline{\H}_g^{-1} \overline{\P}_k^T\bm\alpha_{k,\epsilon}\varepsilon_k	\right]
	=
	L^{-d} \frac{1}{[W]} \frac{[W] [U] L^d}{T} = [U]/T,
\]
which shows that the artificial dissipation has the same dimension as \(\partial{\fnb{U}}/\partial{t}\).

\subsection{High-order accuracy in smooth regions}\label{sec:smooth_scaling}

We examine the asymptotic scaling of the artificial-dissipation term when applied to smooth solutions. The following result distinguishes the low order guaranteed for any indicator value from the high order recovered as the indicator vanishes.

\begin{proposition}\label{prop:ad_design_order_consistency}
Let \(p\geq2\), and assume that the mesh family is shape regular and quasi-uniform, the reference-to-physical element mappings are affine, and the solution remains smooth and admissible. 
Let the distribution vector be as defined in \eqref{eq:local_alpha_definition} and \eqref{eq:scaled_reference_distribution_a}, the entropy budget be as defined in \eqref{eq:epsilon_k} with sensor \eqref{eq:volume_sensor_operator_normalized_fluctuation}, the exponent be \eqref{eq:sensor_exponent} with some indicator satisfying \(0\leq\chi_k\leq1\), and the viscosity budget be as defined in \eqref{eq:bnu_cheap}.
Then the element-local and assembled artificial-dissipation contributions in \cref{eq:local_artificial_dissipation_update} satisfy
\[
	\|\bm d_k\|_\infty
	=
	\fn{O}(h^{d+p_{\rm L}}),
	\qquad
	\left\|
	\overline{\H}_g^{-1}
	\sum_{\Omega_k\in\mathcal T_h}
	\overline{\P}_k^T\bm d_k
	\right\|_\infty
	=
	\fn{O}(h^{p_{\rm L}}).
\]
For the indicator in \cref{eq:theta_exponent_indicator}, \(p_{\rm L}\) may be replaced by \(p_{\rm H}\) in both estimates.
\end{proposition}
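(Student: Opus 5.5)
We bound the factors of $\bm d_k=-\bm\alpha_{k,\epsilon}\varepsilon_k$ separately and then multiply. For the distribution vector, the key observation is the elementary inequality
\[
\left|\frac{b_k}{b_k^2+\epsilon_{\rm mach}V_k^2}\right|\le\frac{1}{|b_k|}
\quad\text{and}\quad
\le \frac{1}{2\sqrt{\epsilon_{\rm mach}}\,V_k}.
\]
We will not use a lower bound on $|b_k|$, which is not available since $b_k$ can vanish for nonconstant states (cf.\ \cref{rem:graph_distribution_safeguard}). Instead we write
\[
\|\bm d_k\|_\infty\le \frac{|b_k|\,\|\bm a_k\|_\infty}{b_k^2+\epsilon_{\rm mach}V_k^2}\,\varepsilon_k
\le \frac{\|\bm a_k\|_\infty}{2\sqrt{\epsilon_{\rm mach}}\,V_k}\,\varepsilon_k .
\]
This uses the fixed positive regularization $\epsilon_{\rm mach}$, consistent with the abstract's ``fixed positive regularization''. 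It then remains to show $\|\bm a_k\|_\infty=\fn{O}(h^{d-1})$ and $\varepsilon_k=\fn{O}(h^{d+1+p_{\rm L}})$ (or with $p_{\rm H}$). Since $V_k\sim h^d$ by quasi-uniformity and shape regularity, these combine to give $\fn{O}(h^{d-1})\,h^{-d}\,\fn{O}(h^{d+1+p_{\rm L}})=\fn{O}(h^{d+p_{\rm L}})$.

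For $\bm a_k=\mu_k\hat{\M}_1\bm u_k$, note that $\mu_k\sim h^{d-2}$ and $\hat{\M}_1=\sum_i\overline{\D}_{\xi_i}^T\overline{\H}_\xi\overline{\D}_{\xi_i}$ has fixed $h$-independent entries. Since $\hat{\M}_1\overline{\I}_k=\bm0$, we may replace $\bm u_k$ by $\bm u_k-\overline{\I}_k\bar{\bm u}$. Under an affine map and for smooth $\fnb U$, this difference is $\fn{O}(h)$ nodally, so $\|\bm a_k\|_\infty=\fn{O}(h^{d-1})$. By the same reasoning, $\overline{\D}_{\xi_i}\bm w_k=\fn{O}(h)$ (one reference derivative equals $h$ times a physical derivative under the affine map, with exactness for constants), and $\bar{\A}_{0,k}=\fn{O}(1)$ because the solution is admissible and stays in a compact set of admissible states. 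Together with $\lambda_k=\fn{O}(1)$, $h_{{\rm eff},k}\sim h$ and $\mu_k\sim h^{d-2}$, this gives $b_{\nu,k}=\fn{O}(h\cdot h^{d-2}\cdot h^2)=\fn{O}(h^{d+1})$.

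For the sensor factor, \cref{eq:regularized_sensor_smooth_bound} gives $\theta_k\le Ch^{2s_\theta-2}$ with $0\le\theta_k<1$. Because $\beta_k=p_k/(2s_\theta-2)\ge p_{\rm L}/(2s_\theta-2)$ and $\theta_k<1$, we have $\theta_k^{\beta_k}\le\theta_k^{p_{\rm L}/(2s_\theta-2)}\le C'h^{p_{\rm L}}$. This holds for any indicator with $0\le\chi_k\le1$. Hence $\varepsilon_k=\fn{O}(h^{d+1+p_{\rm L}})$, which proves the first estimate. For the Hill indicator, \cref{eq:theta_indicator_smooth_bound} gives $\chi_k=\fn{O}(h^{(2s_\theta-2)m_\theta})$, so $p_k=p_{\rm H}-\chi_k(p_{\rm H}-p_{\rm L})$. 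Then
\[
\theta_k^{\beta_k}=\theta_k^{p_{\rm H}/(2s_\theta-2)}\,\theta_k^{-\chi_k(p_{\rm H}-p_{\rm L})/(2s_\theta-2)} .
\]
The first factor is $\fn{O}(h^{p_{\rm H}})$. The second factor is the main obstacle, since $\theta_k^{-\chi_k\cdot c}$ could blow up as $\theta_k\to0$. We handle it by writing $\theta_k^{-c\chi_k}=\exp(-c\chi_k\log\theta_k)$ and bounding $\chi_k|\log\theta_k|\le c_\theta^{-m_\theta}\theta_k^{m_\theta}|\log\theta_k|$. The right-hand side is uniformly bounded on $(0,1)$ because $t^{m}|\log t|$ is bounded there, and at $\theta_k=0$ the whole expression is zero. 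So the correction factor is $\fn{O}(1)$ and $\theta_k^{\beta_k}=\fn{O}(h^{p_{\rm H}})$.

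Finally, for the assembled estimate, the diagonal entries of $\overline{\H}_g$ are sums of positive local weights of size $\sim h^d$, so $\|\overline{\H}_g^{-1}\|_\infty=\fn{O}(h^{-d})$. Each global node receives contributions from a number of elements that is bounded uniformly by shape regularity. Hence the assembled term is $\fn{O}(h^{-d})\,\fn{O}(h^{d+p_{\rm L}})=\fn{O}(h^{p_{\rm L}})$, and likewise with $p_{\rm H}$. In the D-SBP case, $\overline{\H}_g$ is block diagonal with blocks $\overline{\H}_k$, and the same bound holds directly.
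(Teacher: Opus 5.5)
Your proof is correct. Apart from the $p_{\rm H}$ refinement, it is the paper's proof step for step: the AM--GM bound $|b_k|/(b_k^2+\epsilon_{\rm mach}V_k^2)\le 1/(2\sqrt{\epsilon_{\rm mach}}\,V_k)$, the estimates $\|\bm a_k\|_\infty=\fn{O}(h^{d-1})$ and $b_{\nu,k}=\fn{O}(h^{d+1})$, the monotonicity bound $\theta_k^{\beta_k}\le\theta_k^{p_{\rm L}/(2s_\theta-2)}$, and the assembly count all appear there in the same form.

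The difference is in the Hill-indicator step; write $\Delta p=p_{\rm H}-p_{\rm L}$.

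\begin{itemize}
\item The paper inserts the $h$-dependent bound $\chi_k\le C_\chi h^{(2s_\theta-2)m_\theta}$ to get a uniform lower bound on $\beta_k$, and raises $C_\theta h^{2s_\theta-2}$ to that power. It then uses L'H\^opital's rule to show that the leftover factor $\exp\bigl(\Delta p\,C_\chi h^{(2s_\theta-2)m_\theta}\log(1/h)\bigr)$ tends to one. This is why it needs $h$ sufficiently small.
\item You instead factor $\theta_k^{\beta_k}$ exactly and control the correction with the structural inequality $\chi_k|\log\theta_k|\le c_\theta^{-m_\theta}\theta_k^{m_\theta}|\log\theta_k|\le 1/(e\,m_\theta c_\theta^{m_\theta})$. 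This gives $\theta_k^{\beta_k}\le C_*\,\theta_k^{p_{\rm H}/(2s_\theta-2)}$ with $C_*=\exp\bigl(\Delta p/((2s_\theta-2)\,e\,m_\theta c_\theta^{m_\theta})\bigr)$.
\end{itemize}

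Your inequality is purely algebraic. It holds for every state and every $h$, needs no limit argument, and does not use the smooth decay of $\chi_k$ at all.

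The price is the constant. With the paper's choices $c_\theta=10^{-4}$ and $m_\theta=4$, one has $c_\theta^{-m_\theta}=10^{16}$, so $C_*$ is astronomically large and your bound is meaningful only as an order statement. The paper's route instead shows that the effective constant approaches the natural value $C_\theta^{p_{\rm H}/(2s_\theta-2)}$ as $h\to0$.

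You can recover that as well. Note that $t^{m}|\log t|$ is increasing on $(0,e^{-1/m})$, and insert $\theta_k\le C_\theta h^{2s_\theta-2}$; this drives your correction factor to one.
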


\begin{proof}
We fix \(p\), \(p_{\rm L}\), \(p_{\rm H}\), \(L_{\rm ref}>0\), \(c_\theta\), \(m_\theta\), the reference operators, and \(\epsilon_{\rm mach}>0\) independently of \(h\), and take \(\kappa_{\rm diss}=\fn{O}(1)\). Shape regularity and quasi-uniformity give constants \(c,C>0\), independent of \(k\) and \(h\), such that
\[
	ch^d\le V_k\le Ch^d,
	\qquad
	\mu_k=\fn{O}(h^{d-2}),
	\qquad
	h_{{\rm eff},k}=\fn{O}(h).
\]
As stated in \cref{eq:volume_sensor_smooth_scaling}, the sensor-numerator estimate is given by
\[
	S_k=\fn{O}(h^{2s_\theta}).
\]
Furthermore, since \(\hat{\M}_1\) annihilates constants, smooth sampled data satisfy
\[
	\hat{\M}_1\bm u_k
	=
	\hat{\M}_1
	\left(\bm u_k-\overline{\I}_k\bm c_k\right)
	=
	\fn{O}(h),
\]
where \(\bm c_k\) is any representative constant state on \(\Omega_k\). Therefore, using \cref{eq:scaled_reference_distribution_a,eq:distribution_scaling_factor}, we have
\begin{equation} \label{eq:reference_distribution_a_scaling}
	\|\bm a_k\|_\infty
	=
	\fn{O}(h^{d-2})\fn{O}(h)
	=
	\fn{O}(h^{d-1}).
\end{equation}
Similarly, smoothness of the entropy-variable map gives \(\overline{\D}_{\xi_i}\bm w_k=\fn{O}(h)\). Hence, \cref{eq:bnu_cheap} gives
\[
	b_{\nu,k}
	=
	\fn{O}(h)\,
	\fn{O}(h^{d-2})\,
	\fn{O}(h^2)
	=
	\fn{O}(h^{d+1}).
\]
For a constant solution, \(S_k=0\), \(\bm a_k=\bm0\), and \(b_{\nu,k}=0\), so the complete artificial-dissipation update vanishes.

Furthermore, the arithmetic--geometric mean inequality gives
\[
	b_k^2+\epsilon_{\rm mach} V_k^2
	\ge
	2\sqrt{\epsilon_{\rm mach}}V_k|b_k|,
\]
and therefore
\[
	\frac{|b_k|}
	{b_k^2+\epsilon_{\rm mach} V_k^2}
	\le
	\frac{1}
	{2\sqrt{\epsilon_{\rm mach}}V_k}.
\]
Consequently, \eqref{eq:local_alpha_definition} followed by \eqref{eq:reference_distribution_a_scaling} and $V_k = \order{h^d}$, give
\begin{equation}\label{eq:regularized_distribution_smooth_bound}
	\|\bm\alpha_{k,\epsilon}\|_\infty
	\le
	\frac{\|\bm a_k\|_\infty}
	{2\sqrt{\epsilon_{\rm mach}}V_k}
	=
	\fn{O}(h^{-1}).
\end{equation}
This estimate also includes \(b_k=0\), for which \(\bm\alpha_{k,\epsilon}=\bm0\).

By \cref{eq:regularized_sensor_smooth_bound}, we have \(\theta_k=\fn{O}(h^{2s_\theta-2})\). Also, \cref{eq:local_dissipation_order,eq:sensor_exponent} give \(\beta_k\geq p_{\rm L}/(2s_\theta-2)>0\). Therefore, we obtain
\[
	0
	\le
	\theta_k^{\beta_k}
	\le
	\theta_k^{p_{\rm L}/(2s_\theta-2)}
	=
	\fn{O}(h^{p_{\rm L}}).
\]
It follows that the assigned entropy budget satisfies
\begin{equation}\label{eq:epsilon_convergence}
	\varepsilon_k
	=
	\kappa_{\rm diss}\theta_k^{\beta_k}b_{\nu,k}
	=
	\fn{O}(h^{d+1+p_{\rm L}}).
\end{equation}
Combining this estimate with \cref{eq:regularized_distribution_smooth_bound} gives the element-local scaling
\[
	\|\bm d_k\|_\infty
	=
	\|\bm\alpha_{k,\epsilon}\varepsilon_k\|_\infty
	=
	\fn{O}(h^{-1})\fn{O}(h^{d+1+p_{\rm L}})
	=
	\fn{O}(h^{d+p_{\rm L}}).
\]
The inverse of the diagonal norm matrix scales as \(\fn{O}(h^{-d})\), and the number of element contributions at each node is bounded independently of \(h\). Hence, the assembled artificial-dissipation residual vector satisfies
\[
	\left\|
	\overline{\H}_g^{-1}
	\sum_{\Omega_k\in\mathcal T_h}
	\overline{\P}_k^T
	\bm\alpha_{k,\epsilon}\varepsilon_k
	\right\|_\infty
	=
	\fn{O}(h^{-d})
	\fn{O}(h^{-1})
	\fn{O}(h^{d+1+p_{\rm L}})
	=
	\fn{O}(h^{p_{\rm L}}).
\]
To obtain the sharper estimate, let \(\Delta p\coloneqq p_{\rm H}-p_{\rm L}\). From \cref{eq:local_dissipation_order,eq:sensor_exponent}, we have
\[
	(2s_\theta-2)\beta_k
	=
	p_{\rm H}-\Delta p\,\chi_k.
\]
By \cref{eq:regularized_sensor_smooth_bound,eq:theta_indicator_smooth_bound}, there exist constants \(C_\theta,C_\chi>0\), independent of \(h\), such that, for sufficiently small \(h\),
\[
	\theta_k
	\leq
	C_\theta h^{2s_\theta-2},
	\qquad
	\chi_k
	\leq
	C_\chi h^{(2s_\theta-2)m_\theta},
\]
where we may take \(C_\chi=c_\theta^{-m_\theta}C_\theta^{m_\theta}\). The case \(\theta_k=0\) is immediate, so suppose \(0<\theta_k\leq1\). For sufficiently small \(h\), we have \(C_\theta h^{2s_\theta-2}<1\) and \(p_{\rm H}-\Delta p\,C_\chi h^{(2s_\theta-2)m_\theta}>0\). The preceding bound on \(\chi_k\) therefore gives
\[
	\beta_k
	\geq
	\frac{p_{\rm H}-\Delta p\,C_\chi h^{(2s_\theta-2)m_\theta}}
	{2s_\theta-2}.
\]
Since \(0<\theta_k\leq C_\theta h^{2s_\theta-2}<1\), we obtain
\[
	\begin{aligned}
	\theta_k^{\beta_k}
	&\leq
	\left(C_\theta h^{2s_\theta-2}\right)^{
	\frac{p_{\rm H}-\Delta p\,C_\chi h^{(2s_\theta-2)m_\theta}}
	{2s_\theta-2}}
	\\
	&=
	C_\theta^{
	\frac{p_{\rm H}-\Delta p\,C_\chi h^{(2s_\theta-2)m_\theta}}
	{2s_\theta-2}}
	\frac{h^{p_{\rm H}}}
	{h^{\Delta p\,C_\chi h^{(2s_\theta-2)m_\theta}}}
	\\
	&=
	C_\theta^{
	\frac{p_{\rm H}-\Delta p\,C_\chi h^{(2s_\theta-2)m_\theta}}
	{2s_\theta-2}}
	h^{p_{\rm H}}
	\exp\!\left(
	\Delta p\,C_\chi h^{(2s_\theta-2)m_\theta}\log(1/h)
	\right).
	\end{aligned}
\]
The last equality uses \(h^{-a}=\exp(-a\log h)=\exp(a\log(1/h))\). Since \((2s_\theta-2)m_\theta>0\), setting \(x=1/h\) and applying L'H\^{o}pital's rule gives
\[
	\begin{aligned}
	\lim_{h\to0^+}
	h^{(2s_\theta-2)m_\theta}\log(1/h)
	&=
	\lim_{x\to\infty}
	\frac{\log x}{x^{(2s_\theta-2)m_\theta}}
	\\
	&=
	\lim_{x\to\infty}
	\frac{1}{(2s_\theta-2)m_\theta x^{(2s_\theta-2)m_\theta}}
	=
	0.
	\end{aligned}
\]
Thus, the exponential factor converges to one, while the power of \(C_\theta\) remains bounded, and hence \(\theta_k^{\beta_k}=\fn{O}(h^{p_{\rm H}})\). Repeating the preceding entropy-budget, element-local, and assembly estimates gives the claimed bounds with \(p_{\rm L}\) replaced by \(p_{\rm H}\).
\end{proof}

\begin{remark}\label{rem:beta_activation_selectivity}
	The low order \(p_{\rm L}\) controls the worst-case consistency based only on \(0\leq\chi_k\leq1\), and may be chosen below \(p\), including \(p_{\rm L}=1\), to strengthen dissipation in active elements. The high order \(p_{\rm H}\) controls resolved smooth elements through the decay of \(\chi_k\). Thus, design-order consistency follows from either \(p_{\rm L}\geq p\) without using the indicator decay or \(p_{\rm H}\geq p\) with the indicator in \cref{eq:theta_exponent_indicator}. These choices do not affect conservation or entropy stability.
\end{remark}
\subsection{Conservation and entropy stability}\label{sec:conservation_entropy_stability}

With the artificial-dissipation operator now fully specified, we formally establish its semidiscrete conservation and entropy-dissipation properties.

\begin{proposition}\label{prop:local_artificial_dissipation_moments}
	The local contribution \(\bm d_k\) in \cref{eq:local_artificial_dissipation_update} with \(\bm \alpha_{k,\epsilon} \) and \(\bm a_{k} \) defined in \eqref{eq:local_alpha_definition} and \eqref{eq:scaled_reference_distribution_a}, respectively, is locally conservative and locally entropy dissipative, \ie,
	\begin{equation}\label{eq:element_artificial_moments}
		\overline{\I}_k^T\bm d_k=0,
		\quad
		\bm w_k^T\bm d_k =-(1-\eta_k)\varepsilon_k\le0,
		\quad
		\eta_k \coloneqq
		\frac{\epsilon_{\rm mach}V_k^2}
		{b_k^2+\epsilon_{\rm mach}V_k^2},
		\quad
		\epsilon_{\rm mach}>0.
	\end{equation}
	Consequently, the assembled artificial dissipation in \cref{eq:local_artificial_dissipation_update} is conservative, \ie, 
	\begin{equation}\label{eq:local_artificial_conservation}
		\overline{\I}_g^T \overline{\H}_g
		\Big(\der[\bm u_g]{t}-\bm r_g\Big)=	0.
	\end{equation}
	Its assembled entropy contribution is
	\begin{equation}\label{eq:local_artificial_entropy}
		\bm w_g^T\overline{\H}_g\der[\bm u_g]{t}
		=
		\bm w_g^T\overline{\H}_g\bm r_g	-\sum_{\Omega_k\in\mathcal{T}_h}(1-\eta_k)\varepsilon_k.
	\end{equation}
	Hence, if the base spatial residual satisfies \(\bm w_g^T\overline{\H}_g\bm r_g\le0\), then
	\begin{equation}\label{eq:local_artificial_entropy_bound}
		\bm w_g^T \overline{\H}_g\der[\bm u_g]{t}\le-\sum_{\Omega_k\in\mathcal{T}_h}(1-\eta_k)\varepsilon_k	\le0 .
	\end{equation}
\end{proposition}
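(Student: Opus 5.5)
The plan is to compute the two element-local moments directly from the definitions \eqref{eq:local_alpha_definition}--\eqref{eq:b_k}. The global statements then follow by summing over elements. The key observation is that $\varepsilon_k$ is a scalar and $\bm d_k=-\bm\alpha_{k,\epsilon}\varepsilon_k$, so every moment of $\bm d_k$ is $-\varepsilon_k$ times the corresponding moment of $\bm\alpha_{k,\epsilon}$. In turn, $\bm\alpha_{k,\epsilon}$ is the scalar $b_k/(b_k^2+\epsilon_{\rm mach}V_k^2)$ times $\bm a_k=\mu_k\hat{\M}_1\bm u_k$. The denominator is strictly positive because $\epsilon_{\rm mach}>0$ and $V_k>0$, so no division issue arises, and the case $b_k=0$ is included automatically.

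\textbf{Conservation.} I would use that $\hat{\M}_1$ is symmetric and annihilates constants, $\hat{\M}_1\overline{\I}_k=\bm 0$, as noted after \eqref{eq:reference_smoothness_matrix}. This gives $\overline{\I}_k^T\bm a_k=\mu_k(\hat{\M}_1\overline{\I}_k)^T\bm u_k=\bm 0$, and hence $\overline{\I}_k^T\bm d_k=\bm 0$.

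\textbf{Entropy moment.} Contracting with $\bm w_k^T$ and using $b_k=\bm w_k^T\bm a_k$ gives
\[
\bm w_k^T\bm d_k=-\varepsilon_k\frac{b_k^2}{b_k^2+\epsilon_{\rm mach}V_k^2}=-(1-\eta_k)\varepsilon_k .
\]
Since $0\le 1-\eta_k\le 1$ and $\varepsilon_k=\kappa_{\rm diss}\theta_k^{\beta_k}b_{\nu,k}\ge0$, this moment is nonpositive. Nonnegativity of $\varepsilon_k$ holds because $\kappa_{\rm diss}>0$, $\theta_k\in[0,1)$, and $b_{\nu,k}\ge0$ by the positive definiteness of $\bar{\A}_{0,k}$ together with $\mu_k,\lambda_k,h_{{\rm eff},k}>0$.

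\textbf{Assembly.} From \eqref{eq:local_artificial_dissipation_update}, $\overline{\H}_g(\dd\bm u_g/\dd t-\bm r_g)=\sum_k\overline{\P}_k^T\bm d_k$. For conservation, I would use $\overline{\I}_g^T\overline{\P}_k^T=(\overline{\P}_k\overline{\I}_g)^T=\overline{\I}_k^T$. This holds because restriction of the global constant vector is the local constant vector, for both C-SBP and the block extraction of D-SBP, where $\overline{\I}_g$ is the stacked vector of ones. Summing the local conservation identities then gives \eqref{eq:local_artificial_conservation}.

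For the entropy contribution, $\bm w_g^T\overline{\P}_k^T=\bm w_k^T$ by the definition $\bm w_k=\overline{\P}_k\bm w_g$. Summing the local entropy moments yields \eqref{eq:local_artificial_entropy}. Adding the hypothesis $\bm w_g^T\overline{\H}_g\bm r_g\le0$ from \eqref{eq:selected_entropy_estimate_csbp}, or from \eqref{eq:selected_entropy_estimate_dsbp} in the stacked D-SBP case, gives \eqref{eq:local_artificial_entropy_bound}.

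The only step needing care is this assembly identity. The entropy variables must be evaluated pointwise from the global state, so that the restricted $\bm w_k$ coincides with the vector used in $b_k$. For C-SBP this holds because $\bm w_g$ is nodal and single-valued at shared nodes. Otherwise the argument is purely algebraic.
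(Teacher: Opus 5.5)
Your proposal is correct and follows essentially the same route as the paper: establish $\overline{\I}_k^T\bm\alpha_{k,\epsilon}=\bm 0$ and $\bm w_k^T\bm\alpha_{k,\epsilon}=1-\eta_k$, multiply by $-\varepsilon_k$, and sum over elements using $\overline{\P}_k\overline{\I}_g=\overline{\I}_k$ and $\bm w_k=\overline{\P}_k\bm w_g$. The only difference is that you spell out what the paper states ``by construction'': the symmetry of $\hat{\M}_1$ together with $\hat{\M}_1\overline{\I}_k=\bm 0$, and $\varepsilon_k\ge 0$ from $\theta_k\ge 0$ and $b_{\nu,k}\ge 0$.
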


\begin{proof}
	By construction, the distribution coefficient, \(\bm{\alpha}_{k,\epsilon}\), satisfies
    \begin{equation}\label{eq:local_alpha_conditions}
        \overline{\I}_k^T\bm\alpha_{k,\epsilon}=\bm0,
        \qquad
        \bm w_k^T\bm\alpha_{k,\epsilon}=1-\eta_k;
    \end{equation}
    thus it follows that 
	\[
		\overline{\I}_k^T\bm d_k
		=
		-\overline{\I}_k^T \bm\alpha_{k,\epsilon}\varepsilon_k
		=0,
		\qquad
		\bm w_k^T\bm d_k = -(1-\eta_k)\varepsilon_k	\le0.
	\]
	The assembled weak-residual contribution is
	\[
		\overline{\H}_g	\Big(\der[\bm u_g]{t}-\bm r_g\Big)
		=
		\sum_{\Omega_k\in\mathcal{T}_h}	\overline{\P}_k^T\bm d_k.
	\]
	Multiplying this identity by \(\overline{\I}_g^T\) gives
	\[
		\overline{\I}_g^T\overline{\H}_g\Big(\der[\bm u_g]{t}-	\bm r_g\Big)
		=
		\sum_{\Omega_k\in\mathcal{T}_h}	\overline{\I}_k^T\bm d_k = 0.
	\]
	Similarly, we have
	\[
		\bm w_g^T \overline{\H}_g \der[\bm u_g]{t}	
		=
		\bm w_g^T \overline{\H}_g\bm r_g 
		+\sum_{\Omega_k\in\mathcal{T}_h}\bm w_k^T\bm d_k,
	\]
	The identity \(\bm w_k=\overline{\P}_k\bm w_g\), together with the local entropy moment in \cref{eq:element_artificial_moments}, gives \cref{eq:local_artificial_entropy}. The non-positivity bound follows from \(0\le\eta_k\le1\) and \(\varepsilon_k\ge0\).
\end{proof}

The nonnegative scalar \(\varepsilon_k\) is the entropy-dissipation budget assigned to element \(\Omega_k\), whereas the realized entropy removal is \((1-\eta_k)\varepsilon_k\), with \(1-\eta_k=b_k^2/(b_k^2+\epsilon_{\rm mach} V_k^2)\). Thus, \(\eta_k\) is the unapplied fraction and
\(\eta_k\varepsilon_k\) is the unapplied amount of dissipation. If \(\eta_k\approx1\), the entropy-normalized distribution is smoothly attenuated because \(b_k^2\) is small compared with \(\epsilon_{\rm mach} V_k^2\). This can occur for a nearly constant local state or through cancellation in \(b_k=\bm w_k^T\M_k\bm u_k\). If \(b_k=0\), then \(\bm\alpha_{k,\epsilon}=\bm0\) and no dissipation is applied. The regularization keeps the formula finite as the entropy moment approaches zero.

\section{Numerical experiments}\label{sec:numerical_experiments}
The purpose of this section is to study the performance of the dissipation operator, in particular its high-order accuracy, shock capturing capabilities, and robustness for under-resolved vortical flows. Unless stated otherwise, test cases involving the Euler equations use the Hadamard-form two-point flux discretization with the entropy-conserving fluxes of Ranocha \cite{ranocha2018comparison} as the base spatial discretization. The two-dimensional Riemann problem, however, uses a divergence-form discretization of the Euler equations which is not entropy stable, as in the absence of a dedicated positivity-preserving mechanism, positivity violations prevented the use of the entropy-stable scheme. The Burgers test uses the entropy-stable split-form discretization. When an interface-dissipation comparison is reported, we use the local Lax--Friedrichs dissipation. All test degrees satisfy \(p\geq2\). For all test cases, we set \(L_{\rm ref}=1\), \(c_\theta=10^{-4}\), \(m_\theta=4\), \(p_{\rm L}=p\), and \(p_{\rm H}=2p\) in \cref{eq:volume_sensor_operator_normalized_fluctuation,eq:theta_exponent_indicator,eq:local_dissipation_order}. The primary quantities of interest include the solution error, the conservation error, the entropy dissipation, and the convergence of the elementwise entropy budget. Unless stated otherwise, time integration is performed with the adaptive four-stage, third-order strong-stability-preserving Runge--Kutta method (SSPRK43) \cite{fekete2022embedded,kraaijevanger1991contractivity,ranocha2022optimized} from \texttt{OrdinaryDiffEq.jl} \cite{rackauckas2017differentialequations} with absolute and relative tolerances \(10^{-7}\). A computation is terminated if a thermodynamic quantity becomes negative or if the adaptive time step is at most \(10^{-8}\) for ten consecutive time steps. Where it appears, \(\epsilon_{\rm mach}\) denotes the machine precision of the working floating-point type.

In all cases, we use diagonal-\(\E\) SBP operators. In one dimension, we use Legendre--Gauss--Lobatto (LGL) operators. In two dimensions, we use the triangular diagonal-\(\E\) operators with LGL facet nodes from \cite{worku2024quadrature}. In three dimensions, we use tetrahedral operators based on the quadrature rules in \cite{worku2024quadrature}. The triangular and tetrahedral operators use quadrature rules of degree \(2p\). The mesh is constructed by first subdividing each coordinate direction into \(n_{\rm edge}\) segments, then splitting each resulting quadrilateral into two triangles and each hexahedron into six tetrahedra. Thus, the number of elements is \(n_{\rm edge}\) in one dimension, \(2n_{\rm edge}^2\) in two dimensions, and \(6n_{\rm edge}^3\) in three dimensions.

When an exact or reference solution is available, let \(\bm u_g^{\rm ex}(t)\) denote its conservative nodal values of the same size as \(\bm u_g(t)\), and define the error as 
\[
	\bm e_g(t) \coloneqq \bm u_g(t)-\bm u_g^{\rm ex}(t).
\]
For C-SBP discretizations, the error is computed with the assembled block norm matrix defined in \cref{eq:csbp_oper_def}, as
\begin{equation}\label{eq:csbp_error_norm}
	\|\bm e_g\|_{\H}^2 	= \bm e_g^T\overline{\H}_g\bm e_g
	= \sum_{\Omega_k\in\mathcal{T}_h} \bm e_k^T\overline{\H}_k\bm e_k
	=\sum_{\Omega_k\in\mathcal{T}_h} \sum_{\ell=1}^{n_c} \bm e_{k,\ell}^T\H_k\bm e_{k,\ell}.
\end{equation}
For D-SBP discretizations, the same element sum in \cref{eq:csbp_error_norm} is used, with \(\overline{\P}_k\) interpreted as extraction of the \(k\)-th element block. Furthermore, for periodic cases, at each output time, we evaluate the conservation defect as 
\begin{equation}\label{eq:conservation_error}
	\sum_{j=1}^{n_c} \left|	\sum_{\Omega_k\in\mathcal{T}_h}	\bm{1}_k^T\H_k \der[\bm u_{k,j}]{t} \right|.
\end{equation}
Finally, we report the change in entropy as \begin{equation}\label{eq:entropy_metric}
      \frac{\sum_{\Omega_k\in\mathcal{T}_h} \bm{1}_k^T\H_k\bm s_k(t)
	  - \sum_{\Omega_k\in\mathcal{T}_h}\bm{1}_k^T\H_k\bm s_k(0)}{\big|\sum_{\Omega_k\in\mathcal{T}_h}\bm{1}_k^T\H_k\bm s_k(0)\big|}
\end{equation}
when the denominator is nonzero, otherwise only the numerator is calculated.

In tables and figures, we use the notation EC to denote an entropy conservative discretization, AD to denote the artificial dissipation of this work given in \cref{eq:local_artificial_dissipation_update}, and LLF to indicate the use of the local Lax-Friedrichs interface dissipation. 

\subsection{Two- and three-dimensional isentropic vortex}
The isentropic vortex problems \cite{erlebacher1997interaction,williams2013nodal} are used to verify accuracy and convergence rate for smooth solutions. In two dimensions, the domain is \(\Omega=[0,20]\times[-5,5]\) with periodic boundary conditions, and the final time is \(t_f=1\). The exact solution is the moving isentropic vortex
\begin{equation}\label{eq:euler2d_vortex}
	\begin{aligned}
		\rho & = \left(	1- \frac{\alpha^2(\gamma-1)}{16\gamma\pi^2}
		\exp\!\left(2(1-r^2)\right)	\right)^{1/(\gamma-1)},
		\qquad
		p=\rho^\gamma,
		\\
		u& = 1-	\frac{\alpha}{2\pi}(y-y_c)\exp(1-r^2),
		\qquad
		v= \frac{\alpha}{2\pi} \left(x-(x_c+t)\right)\exp(1-r^2),
	\end{aligned}
\end{equation}
where \(r^2=\left(x-(x_c+t)\right)^2+(y-y_c)^2\), \(\alpha=3\), \(\gamma=1.4\), and \((x_c,y_c)=(5,0)\). The initial condition is obtained from the exact solution at \(t=0\). In three dimensions, the domain is \(\Omega=[-10,10]^3\) with periodic boundary conditions and, as in the two-dimensional case, \(t_f=1\). The exact solution is given by
\begin{equation}\label{eq:euler3d_vortex}
	\begin{aligned}
		\rho &=	\left(1-\frac{2}{25}(\gamma-1)	\exp\!\left(1-(y-t)^2-x^2\right)\right)^{1/(\gamma-1)},
		\; 
		e =	\frac{\rho^\gamma}{\gamma(\gamma-1)}+\frac{\rho}{2}(u^2+v^2+w^2),
		\\
		u &=-\frac{2}{5}(y-t) \exp\!\left(\frac{1}{2}\left[1-(y-t)^2-x^2\right]\right),
		\;
		v =	1+\frac{2}{5}x \exp\!\left(\frac{1}{2}\left[1-(y-t)^2-x^2\right]\right),
		\;
		w=0.
	\end{aligned}
\end{equation}
The two- and three-dimensional isentropic-vortex convergence studies use the classical fourth-order Runge--Kutta (RK4) with CFL number \(0.1\). To isolate the entropy-conservative or entropy-dissipative behavior of the spatial discretization from additional entropy changes introduced by time integration, the corresponding long-time studies use the fourth-order relaxation Runge--Kutta (RRK4) \cite{ranocha2020relaxation,ketcheson2019relaxation} method, with a nominal time step based on CFL number \(0.2\) in both dimensions.

\subsubsection{Accuracy}
We conduct convergence studies for both the two- and three-dimensional isentropic vortex problems. \Cref{tab:isentropic-vortex-convergence-2d,tab:isentropic-vortex-convergence-3d} report the \(\H\)-norm solution error and the maximum elementwise entropy budget for the Hadamard-form two-point flux discretizations with and without the proposed artificial dissipation. For C-SBP, the errors with and without AD agree closely and become indistinguishable to the reported digits on the refined meshes, showing that the added dissipation has a negligible effect on the accuracy of the smooth solution. For D-SBP, the errors obtained with LLF and AD are smaller than those of the entropy-conservative discretization; this comparison reflects the combined effect of the interface and volume dissipation. In both two and three dimensions, the solution-error rates approach the expected \(p\)- to \((p+1)\)-order range under refinement. Although not pursued here, rates closer to \(p+1\) can be obtained by interpolating and sampling the solution at higher-degree quadrature nodes to better approximate the \(L^2\) norm \cite{shadpey2020entropy,worku2023entropy}. The maximum entropy budget decreases at an order of at least $2p+d+1$, consistent with the estimate of \eqref{eq:epsilon_convergence} with $p_{\rm L}$ replaced with the sharper $p_{\rm H} = 2p$, confirming that the proposed dissipation is strongly deactivated for the smooth solution and supporting the design-order consistency estimate in \cref{prop:ad_design_order_consistency}.

\begin{table}[!t]
\centering
\scriptsize
\setlength{\tabcolsep}{3.5pt}
\renewcommand{\arraystretch}{1.18}
\caption{Isentropic vortex convergence in 2D at \(t_f=1.0\).}
\label{tab:isentropic-vortex-convergence-2d}
\begin{tabular}{c r cc cccc cc cccc}
\toprule
\addlinespace[0.25em]
 & & \multicolumn{6}{c}{C-SBP} & \multicolumn{6}{c}{D-SBP} \\
\addlinespace[0.15em]
\cmidrule(lr){3-8}\cmidrule(lr){9-14}
\addlinespace[0.15em]
$p$ & $n_x$ & \multicolumn{2}{c}{EC} & \multicolumn{4}{c}{AD \cref{eq:local_artificial_dissipation_update}} & \multicolumn{2}{c}{EC} & \multicolumn{4}{c}{LLF, AD \cref{eq:local_artificial_dissipation_update}} \\
\cmidrule(lr){3-4}\cmidrule(lr){5-8}\cmidrule(lr){9-10}\cmidrule(lr){11-14}
 & & error & $\mathrm{rate}_e$ & error & $\mathrm{rate}_e$ & $\varepsilon_{k,\max}$ & $\mathrm{rate}_{\varepsilon}$ & error & $\mathrm{rate}_e$ & error & $\mathrm{rate}_e$ & $\varepsilon_{k,\max}$ & $\mathrm{rate}_{\varepsilon}$ \\
\addlinespace[0.15em]
\midrule
\addlinespace[0.1em]
3 & 32 & 1.07e-02 & -- & 1.07e-02 & -- & 6.52e-06 & -- & 8.25e-03 & -- & 4.68e-03 & -- & 1.02e-08 & -- \\
 & 64 & 9.27e-04 & 3.53 & 9.27e-04 & 3.52 & 6.84e-11 & 16.54 & 7.05e-04 & 3.55 & 3.81e-04 & 3.62 & 1.54e-11 & 9.36 \\
 & 128 & 9.30e-05 & 3.32 & 9.30e-05 & 3.32 & 2.21e-13 & 8.27 & 6.01e-05 & 3.55 & 3.04e-05 & 3.65 & 3.16e-14 & 8.93 \\
 & 256 & 7.12e-06 & 3.71 & 7.12e-06 & 3.71 & 1.45e-16 & 10.57 & 6.39e-06 & 3.23 & 2.25e-06 & 3.75 & 5.67e-17 & 9.12 \\
\addlinespace[0.35em]
4 & 32 & 1.55e-03 & -- & 1.55e-03 & -- & 3.23e-13 & -- & 1.43e-03 & -- & 6.72e-04 & -- & 9.76e-14 & -- \\
 & 64 & 8.38e-05 & 4.21 & 8.38e-05 & 4.21 & 9.05e-17 & 11.80 & 7.32e-05 & 4.28 & 3.05e-05 & 4.46 & 7.74e-17 & 10.30 \\
 & 128 & 4.73e-06 & 4.15 & 4.73e-06 & 4.15 & 5.49e-20 & 10.68 & 3.63e-06 & 4.33 & 1.31e-06 & 4.55 & 5.32e-20 & 10.51 \\
 & 256 & 2.14e-07 & 4.46 & 2.14e-07 & 4.46 & 2.63e-23 & 11.03 & 1.69e-07 & 4.43 & 5.03e-08 & 4.70 & 2.55e-23 & 11.03 \\
\addlinespace[0.1em]
\bottomrule
\end{tabular}
\end{table}

\begin{table}[!t]
\centering
\scriptsize
\setlength{\tabcolsep}{3.5pt}
\renewcommand{\arraystretch}{1.18}
\caption{Isentropic vortex convergence in 3D at \(t_f=1.0\).}
\label{tab:isentropic-vortex-convergence-3d}
\begin{tabular}{c r cc cccc cc cccc}
\toprule
\addlinespace[0.25em]
 & & \multicolumn{6}{c}{C-SBP} & \multicolumn{6}{c}{D-SBP} \\
\addlinespace[0.15em]
\cmidrule(lr){3-8}\cmidrule(lr){9-14}
\addlinespace[0.15em]
$p$ & $n_x$ & \multicolumn{2}{c}{EC} & \multicolumn{4}{c}{AD \cref{eq:local_artificial_dissipation_update}} & \multicolumn{2}{c}{EC} & \multicolumn{4}{c}{LLF, AD \cref{eq:local_artificial_dissipation_update}} \\
\cmidrule(lr){3-4}\cmidrule(lr){5-8}\cmidrule(lr){9-10}\cmidrule(lr){11-14}
 & & error & $\mathrm{rate}_e$ & error & $\mathrm{rate}_e$ & $\varepsilon_{k,\max}$ & $\mathrm{rate}_{\varepsilon}$ & error & $\mathrm{rate}_e$ & error & $\mathrm{rate}_e$ & $\varepsilon_{k,\max}$ & $\mathrm{rate}_{\varepsilon}$ \\
\addlinespace[0.15em]
\midrule
\addlinespace[0.1em]
3 & 10 & 6.01e-01 & -- & 5.93e-01 & -- & 2.12e-03 & -- & 5.35e-01 & -- & 3.75e-01 & -- & 2.19e-04 & -- \\
 & 15 & 2.24e-01 & 2.43 & 2.24e-01 & 2.40 & 4.34e-05 & 9.59 & 1.74e-01 & 2.77 & 1.09e-01 & 3.05 & 1.39e-08 & 23.84 \\
 & 20 & 1.01e-01 & 2.76 & 1.01e-01 & 2.75 & 6.00e-08 & 22.89 & 7.61e-02 & 2.88 & 4.51e-02 & 3.07 & 1.81e-09 & 7.07 \\
 & 25 & 5.00e-02 & 3.16 & 5.00e-02 & 3.16 & 2.62e-09 & 14.03 & 3.79e-02 & 3.13 & 2.08e-02 & 3.48 & 1.95e-10 & 10.00 \\
\addlinespace[0.35em]
4 & 10 & 2.46e-01 & -- & 2.46e-01 & -- & 1.57e-09 & -- & 2.29e-01 & -- & 1.32e-01 & -- & 1.60e-10 & -- \\
 & 15 & 5.92e-02 & 3.52 & 5.92e-02 & 3.52 & 6.56e-12 & 13.50 & 6.12e-02 & 3.25 & 3.24e-02 & 3.47 & 7.15e-13 & 13.35 \\
 & 20 & 1.77e-02 & 4.20 & 1.77e-02 & 4.20 & 9.09e-14 & 14.87 & 1.86e-02 & 4.14 & 9.57e-03 & 4.24 & 8.16e-15 & 15.55 \\
 & 25 & 7.36e-03 & 3.93 & 7.36e-03 & 3.93 & 3.34e-15 & 14.81 & 7.46e-03 & 4.09 & 3.77e-03 & 4.17 & 2.47e-16 & 15.67 \\
\addlinespace[0.1em]
\bottomrule
\end{tabular}
\end{table}

\subsubsection{Conservation, entropy stability, and long-time error behavior}
We also use the isentropic vortex problems to study conservation, entropy dissipation, and long-time error growth. For both problems, the solutions are evolved to \(t_f=80\). As pointed out in \cite{spiegel2015survey}, long-time isentropic vortex analyses on periodic domains should be interpreted with care, as periodic isentropic-vortex calculations with high-order and low-dissipation methods can recirculate and accumulate error. Here, the test is used as a controlled comparison, with all methods run using the same mesh, operator family, time integrator, and initial condition. Thus, differences in the long-time outputs of interest reflect the relative effect of the added dissipation in this setting.

\begin{figure}[!t]
	\centering
	\begin{subfigure}{0.48\textwidth}
		\includegraphics[width=\linewidth]{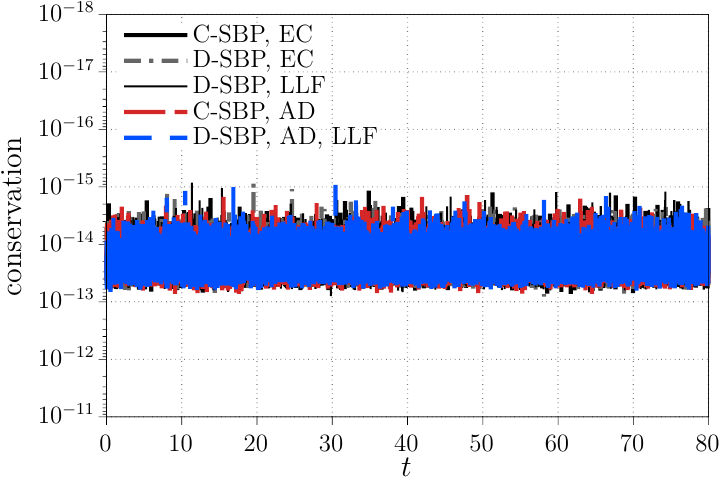}
		\caption{2D, \(p=4\), \(n_{\rm edge}=32\).}
	\end{subfigure}
	\hfill
	\begin{subfigure}{0.48\textwidth}
		\includegraphics[width=\linewidth]{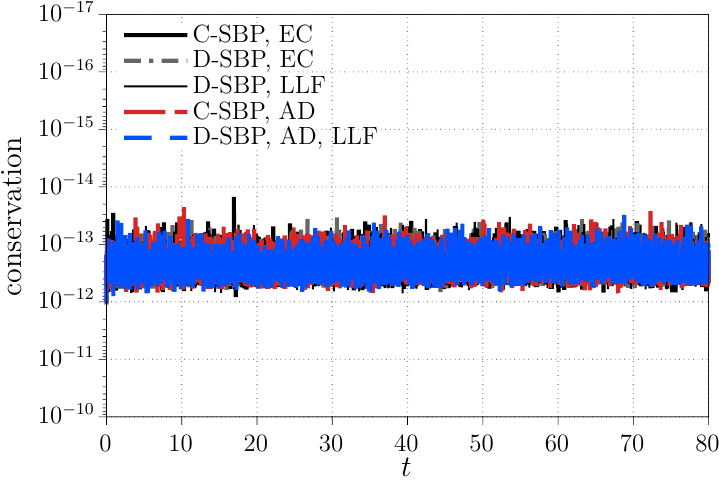}
		\caption{3D, \(p=4\), \(n_{\rm edge}=10\).}
	\end{subfigure}
	\caption{Conservation error for the long-time isentropic vortex tests.}
	\label{fig:isentropic-vortex-conservation}
\end{figure}

Supporting the result in \cref{prop:local_artificial_dissipation_moments}, \cref{fig:isentropic-vortex-conservation} shows that the proposed artificial dissipation preserves conservation to machine precision in both two and three dimensions. The change in mathematical entropy, defined in \cref{eq:entropy_metric}, is shown in \cref{fig:isentropic-vortex-entropy}. With RRK4, the entropy-conservative baselines exhibit changes close to zero, while the computations with AD, LLF, or both exhibit nonpositive entropy changes, consistent with the semidiscrete entropy properties. In two dimensions, the small EC entropy drift becomes more pronounced near the periodic boundary crossings of the vortex, at approximately \(t=15,35,55,75\). The isolated-vortex profile is not exactly periodic on a finite domain, and the boundary artifacts and recirculation of numerical errors discussed in \cite{spiegel2015survey} may contribute to the observed behavior. In both two and three dimensions, the D-SBP histories with LLF alone and with LLF and AD nearly coincide, indicating that the added volume dissipation has a negligible additional effect on the entropy change for this smooth problem.
\begin{figure}[!t]
	\centering
	\begin{subfigure}{0.48\textwidth}
		\includegraphics[width=\linewidth]{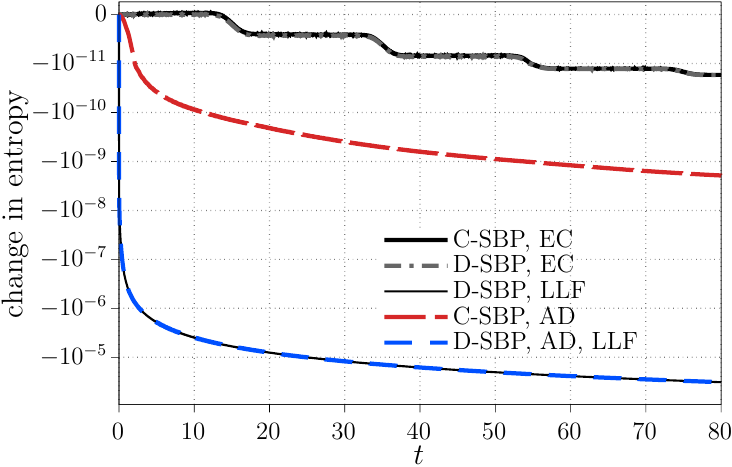}
		\caption{2D, \(p=4\), \(n_{\rm edge}=32\).}
	\end{subfigure}
	\hfill
	\begin{subfigure}{0.48\textwidth}
		\includegraphics[width=\linewidth]{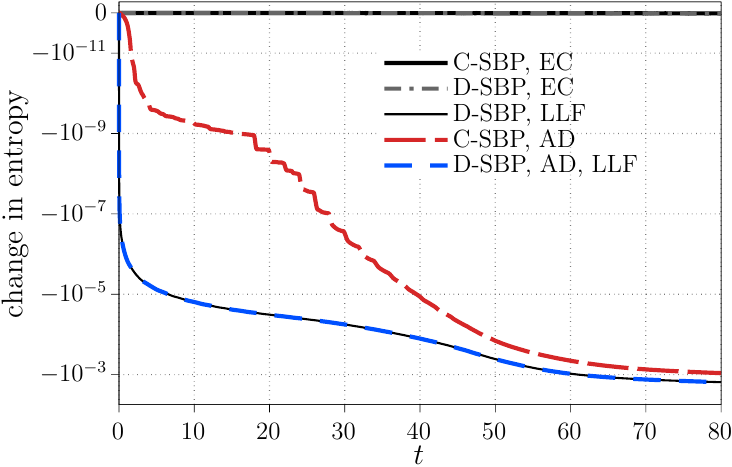}
		\caption{3D, \(p=4\), \(n_{\rm edge}=10\).}
	\end{subfigure}
	\caption{Change in entropy, \cref{eq:entropy_metric}, for the long-time isentropic vortex runs.}
	\label{fig:isentropic-vortex-entropy}
\end{figure}

\Cref{fig:isentropic-vortex-error} compares the long-time solution-error histories. In two dimensions, the C-SBP results with and without AD are indistinguishable, as are the D-SBP results with LLF and with LLF and AD. In three dimensions, the two D-SBP histories with LLF also nearly coincide, while C-SBP with AD follows the entropy-conservative result closely during the early evolution and produces a moderately smaller error at later times. Thus, the proposed volume dissipation does not significantly increase the long-time error for these smooth problems. The substantially smaller D-SBP errors relative to the entropy-conservative discretization are primarily associated with the LLF interface dissipation, since adding AD to LLF produces essentially no further change.
\begin{figure}[!t]
	\centering
	\begin{subfigure}{0.48\textwidth}
		\includegraphics[width=\linewidth]{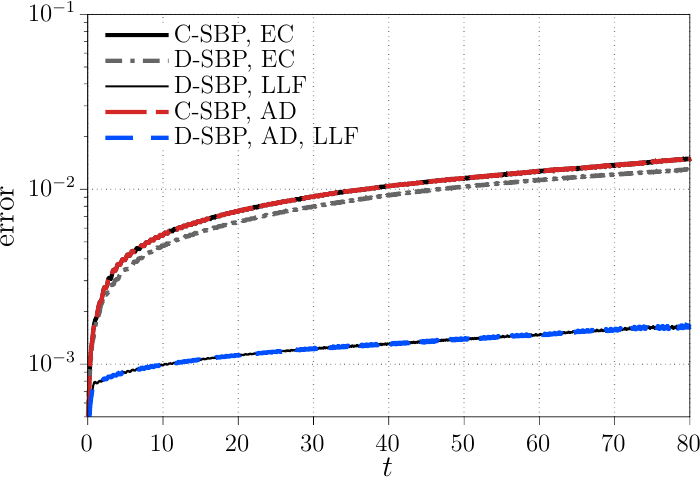}
		\caption{2D, \(n_{\rm edge}=32\).}
		\label{fig:isentropic-vortex-error-coarse}
	\end{subfigure}
	\hfill
	\begin{subfigure}{0.48\textwidth}
		\includegraphics[width=\linewidth]{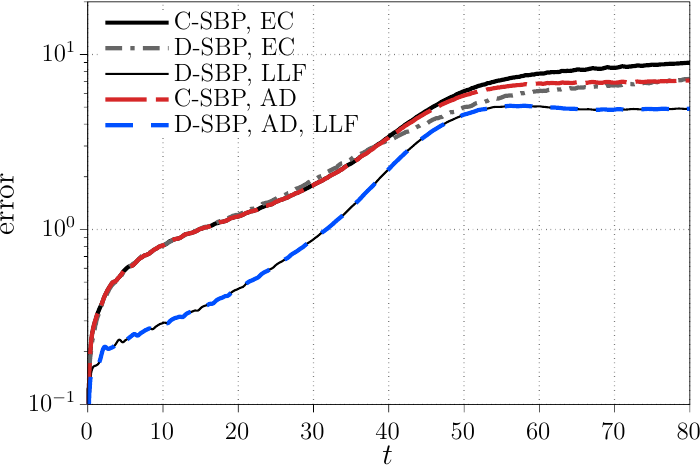}
		\caption{3D, \(n_{\rm edge}=10\).}
		\label{fig:isentropic-vortex-error-3d}
	\end{subfigure}
	\caption{Long-time error histories for the \(p=4\) isentropic vortex tests.}
	\label{fig:isentropic-vortex-error}
\end{figure}
Finally, \cref{fig:isentropic-vortex-epsilon} shows the corresponding total entropy budgets, \(\varepsilon_h=\sum_{\Omega_k\in\mathcal{T}_h}\varepsilon_k\). In 2D, the budgets remain small throughout the simulation, and refinement from \(n_{\rm edge}=32\) to \(n_{\rm edge}=64\) reduces their final values by approximately four and three orders of magnitude for C-SBP and D-SBP, respectively. In 3D, the budgets are strongly time dependent, increasing by several orders of magnitude before reaching a maximum and subsequently decreasing, with C-SBP producing the larger budget.
\begin{figure}[!t]
	\centering
	\begin{subfigure}{0.48\textwidth}
		\includegraphics[width=\linewidth]{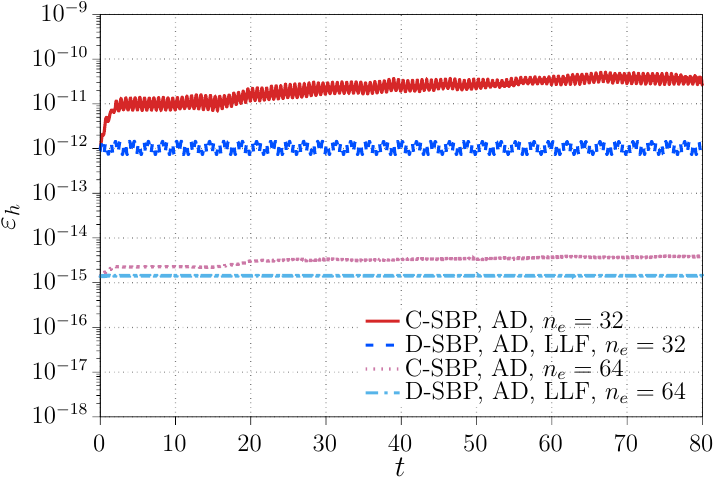}
		\caption{2D, \(p=4\), \(n_{\rm edge}=32,64\).}
	\end{subfigure}
	\hfill
	\begin{subfigure}{0.48\textwidth}
		\includegraphics[width=\linewidth]{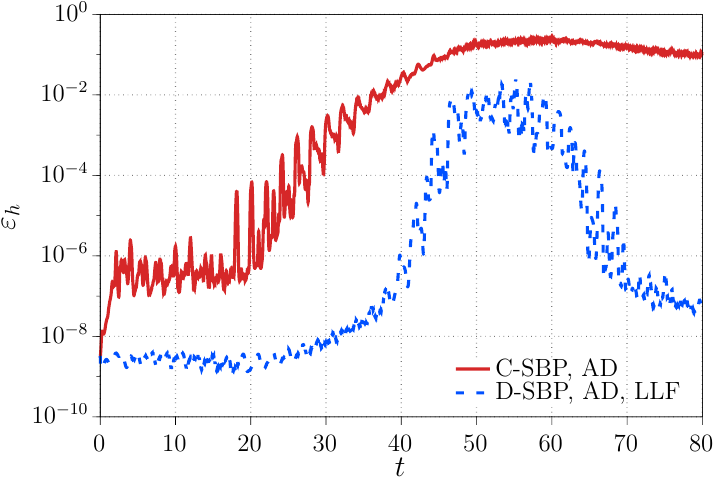}
		\caption{3D, \(p=4\), \(n_{\rm edge}=10\).}
	\end{subfigure}
	\caption{Total entropy budget in the long-time isentropic
	vortex tests with the proposed artificial dissipation.}
	\label{fig:isentropic-vortex-epsilon}
\end{figure}

\subsection{One-dimensional Burgers shock problem}
The inviscid Burgers equation is used as the first test involving shock waves for the proposed artificial dissipation. The equation in the split form is written as
\begin{equation}\label{eq:burgers_pde}
	\pder[\fn{U}]{t} + \frac{1}{3}\pder[\fn{U}^{2}]{x}
	+ \frac{1}{3}\fn{U}\pder[\fn{U}]{x}	=0,
	\qquad	\Omega=[0,1],
\end{equation}
with periodic boundary conditions and initial condition
\begin{equation}\label{eq:burgers_ic}
	\fn{U}(x,0)=2+\sin(2\pi x).
\end{equation}

For this scalar problem, the entropy-stable split-form discretization with SATs is used as the base spatial discretization. This discretization is equivalent to the Hadamard-form discretization, and it is included here for completeness:
\[
	\der[\bm u_k]{t} =	-\frac{1}{3}\D_{x k}(\bm u_k\circ\bm u_k)
	-\frac{1}{3}\bm u_k\circ\D_{x k}\bm u_k
	+ \sum_{\gamma\subset\Gamma_k} \H_k^{-1}\R_{\gamma k}^{T}\B_{\gamma}\bm g_{\gamma k},
\]
where, for \(\gamma=\Gamma_k \cap \Gamma_v\), \(\bm u_{\gamma_k}=\R_{\gamma k}\bm u_k\) and \(\bm u_{\gamma_v}=\R_{\gamma v}\bm u_v\) are the two facet traces, and
\begin{align*}
	\bm g_{\gamma k} &= \N_{x\gamma k} \left(\frac{1}{3}\R_{\gamma k}(\bm u_k\circ\bm u_k)
	- \frac{1}{6}\bm u_{\gamma_k}\circ\bm u_{\gamma_v}
	- \frac{1}{6}\bm u_{\gamma_v}\circ\bm u_{\gamma_v}\right)
	\\
	&\quad - \frac{\phi}{4}\left(\left|\N_{x\gamma k}\bm u_{\gamma_k}\right|
	+\left|\N_{x\gamma v}\bm u_{\gamma_v}\right|\right)	\circ
	(\bm u_{\gamma_k}-\bm u_{\gamma_v}).
\end{align*}
The \(\phi\) parameter controls the interface dissipation; \(\phi=1\) gives upwind fluxes while \(\phi=0\) gives the entropy-conservative SAT.

We run the Burgers problem above up to \(t_f=1\) and compare the numerical solution to the exact solution. \Cref{fig:burgers-moving-shock} shows the \(p=4\), \(n_{\rm edge}=50\) results. Both the C-SBP and D-SBP discretizations with the proposed artificial dissipation capture the shock at the correct location. On this mesh, both the C-SBP and D-SBP results have localized oscillations near the discontinuity, and the D-SBP result with artificial dissipation and upwind SATs is close to the upwind-only result. The lower panels show the elementwise smoothness sensor and entropy budget. The sensor is largest near the shock and can show small background responses, while the entropy budget \(\varepsilon_k\) is sharply localized at the shock, as intended. We stress that this dissipation is not designed to be a rigorous shock-capturing mechanism, and therefore the oscillations localized at the shock should not be interpreted as a failure of the dissipation operator. Although not plotted, we have confirmed that the discretization is conservative and entropy dissipative. Furthermore, additional comparisons using matched adaptive time-integration settings showed no substantial increase in the number of time steps required to reach the final time.
\begin{figure}[!t]
	\centering
	\begin{subfigure}{0.48\textwidth}
		\includegraphics[width=\linewidth]{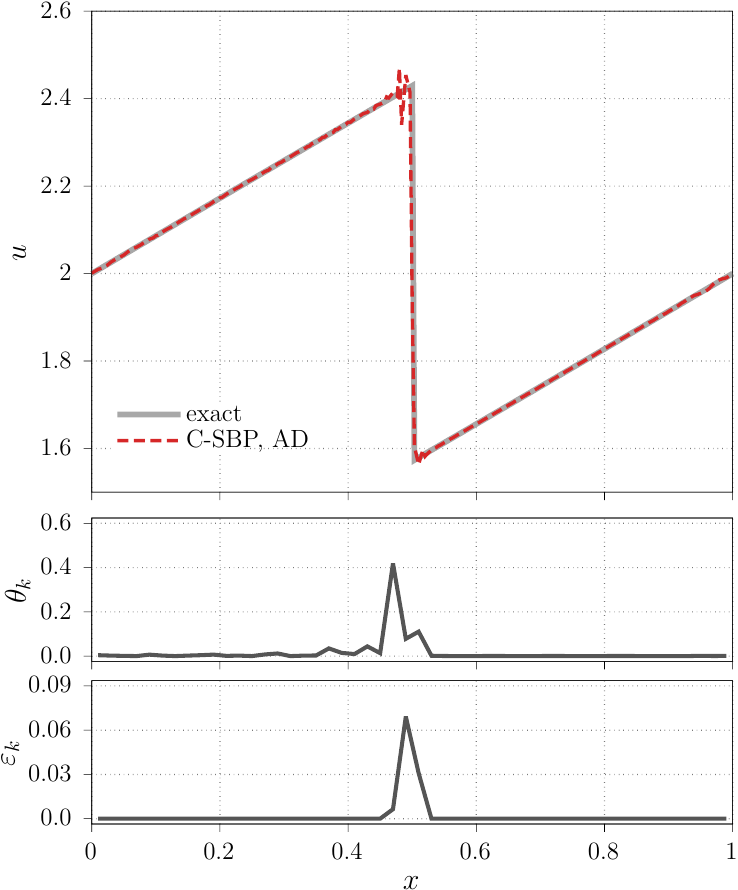}
		\caption{C-SBP discretization.}
	\end{subfigure}
	\hfill
	\begin{subfigure}{0.48\textwidth}
		\includegraphics[width=\linewidth]{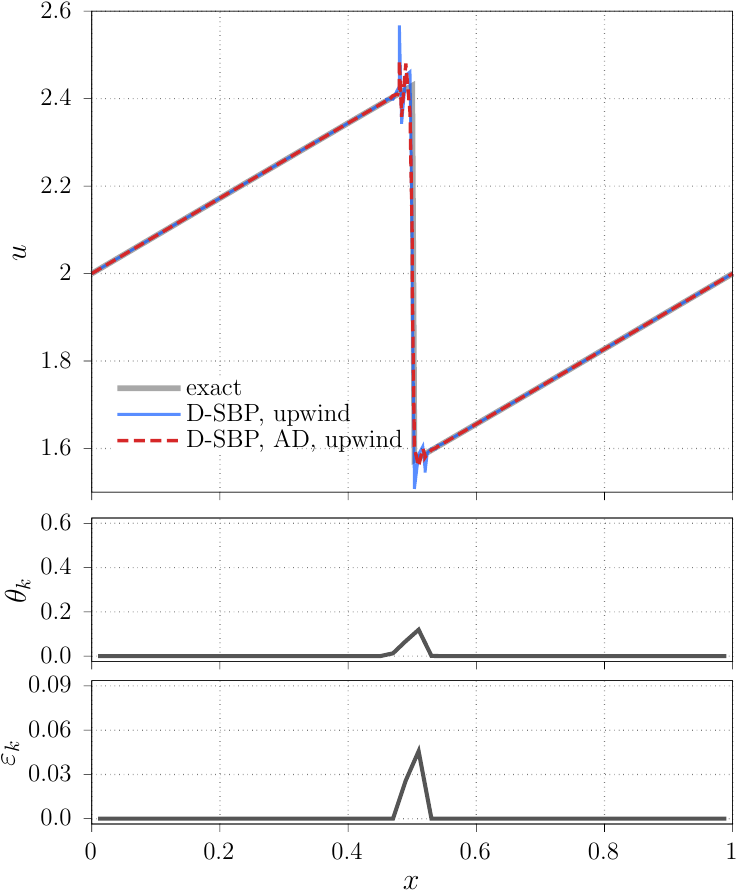}
		\caption{D-SBP discretizations.}
	\end{subfigure}
	\caption{Burgers moving shock at \(t_f=1\), using \(p=4\) and
	\(n_{\rm edge}=50\).}
	\label{fig:burgers-moving-shock}
\end{figure}

\subsection{Sod shock tube}
The Sod shock tube problem \cite{sod1978survey} provides a simple Euler shock, contact, and rarefaction with an exact Riemann solution. We solve the one-dimensional Euler equations on \(\Omega=[0,1]\) until \(t_f=0.2\) with initial data
\begin{equation}\label{eq:sod_ic}
	(\rho,u,p)(x,0)
	=
	\begin{cases}
		(1,0,1),       & x\le 0.5, \\
		(0.125,0,0.1), & x>0.5 .
	\end{cases}
\end{equation}
The exact solution is computed using the iterative shock-tube construction described in \cite[Sec.~3.3.2]{pulliam2014fundamental}.

\Cref{fig:sod-shock-tube} shows the density for \(p=4\), using \(n_{\rm edge}=100\). With the AD, both C-SBP and D-SBP capture the rarefaction, contact, and shock with minimal oscillations and remain close to the exact Riemann solution. The smoothness sensor \(\theta_k\) is not localized only at the shock; it can also respond to small oscillations or roughness away from the shock. This behavior is consistent with the relative-smoothness interpretation in \cref{rem:relative_roughness_sensor}. The entropy budget \(\varepsilon_k\), however, is concentrated on the elements near the shock, so the added dissipation is targeted. For D-SBP, compared with the split-form discretization using LLF interface dissipation alone, the proposed artificial dissipation reduces oscillations near the discontinuities, and the C-SBP result shows that the same residual-correction-based dissipation also captures the shock accurately despite the continuous nature of the discrete solution.
\begin{figure}[!t]
	\centering
	\begin{subfigure}{0.48\textwidth}
		\includegraphics[width=\linewidth]{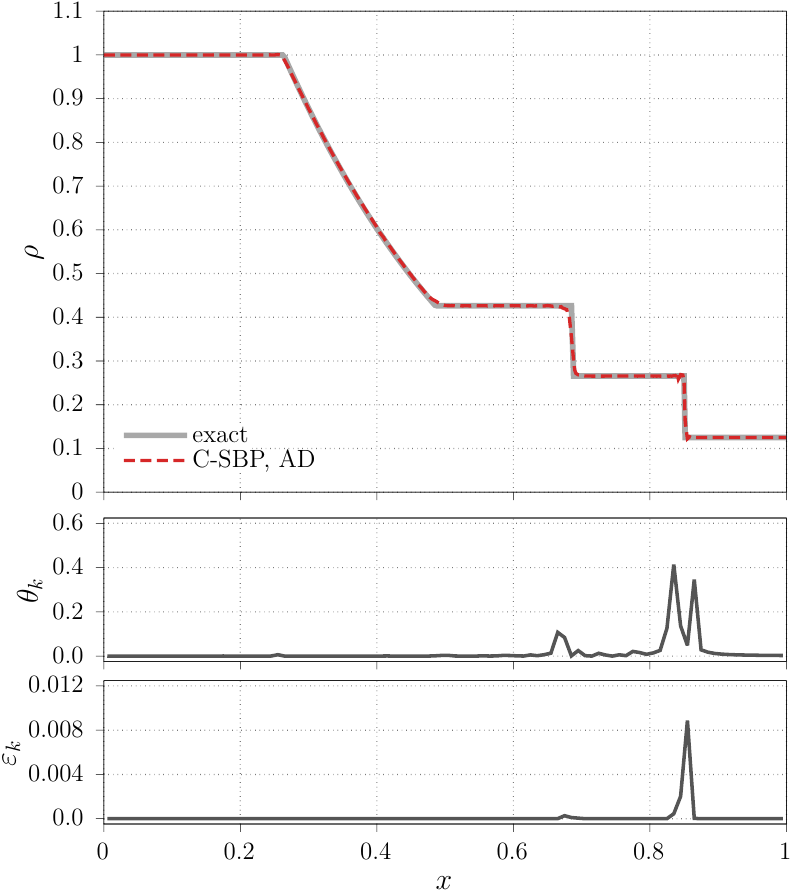}
		\caption{C-SBP, \(p=4\).}
	\end{subfigure}
	\hfill
	\begin{subfigure}{0.48\textwidth}
		\includegraphics[width=\linewidth]{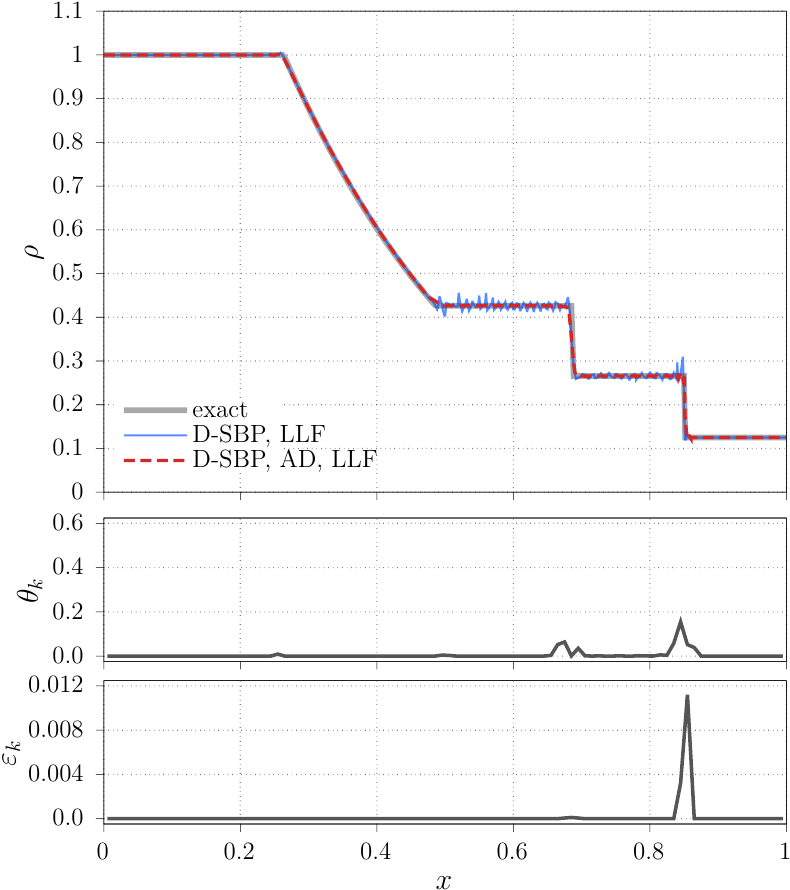}
		\caption{D-SBP, \(p=4\).}
	\end{subfigure}
	\caption{Sod shock tube density at \(t_f=0.2\), using
	\(p=4\) and \(n_{\rm edge}=100\). The lower panels in each plot show the
	elementwise smoothness sensor \(\theta_k\) and entropy budget
	\(\varepsilon_k\).}
	\label{fig:sod-shock-tube}
\end{figure}

\Cref{fig:sod-shock-tube-entropy-change} compares the normalized change in mathematical entropy for \(p\in\{3,4\}\) with that of the exact Riemann solution. The entropy change is negative for all three discretizations. The C-SBP and D-SBP solutions with AD exhibit similar entropy histories and produce a larger reduction in mathematical entropy than both the exact solution and the D-SBP scheme with LLF dissipation alone. This excess entropy reduction is more pronounced for \(p=3\) than for \(p=4\), consistent with the greater dissipation expected at lower polynomial degree. Interestingly, the D-SBP scheme with LLF and AD produces a smaller entropy reduction than the C-SBP scheme with AD alone. One possible explanation is that the LLF interface dissipation suppresses some of the oscillatory content that would otherwise activate the volume artificial dissipation. The D-SBP scheme with LLF dissipation alone follows the exact entropy change most closely; its small amount of dissipation explains the larger pointwise solution oscillations observed in \cref{fig:sod-shock-tube}. 

\begin{figure}[!t]
	\centering
	\begin{subfigure}{0.48\textwidth}
		\includegraphics[width=\linewidth]{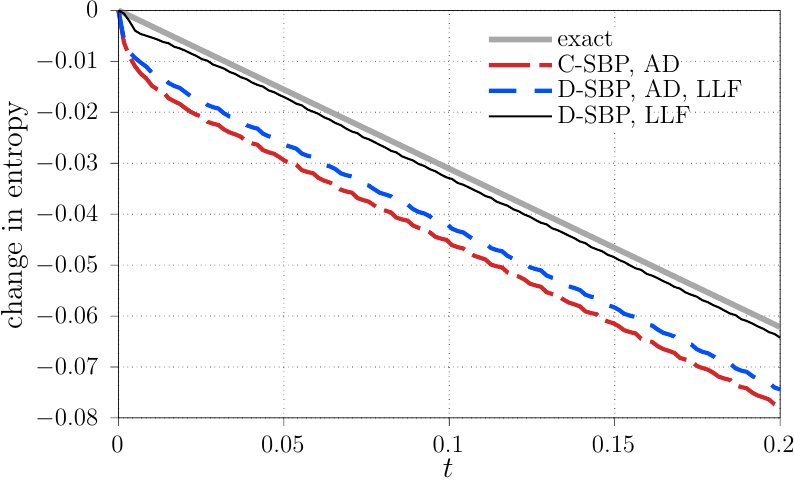}
		\caption{\(p=3\).}
	\end{subfigure}
	\hfill
	\begin{subfigure}{0.48\textwidth}
		\includegraphics[width=\linewidth]{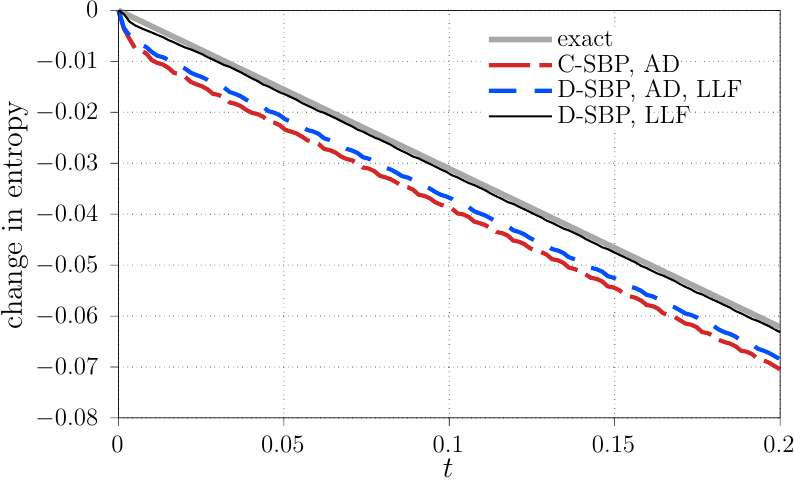}
		\caption{\(p=4\).}
	\end{subfigure}
	\caption{Change in entropy, \cref{eq:entropy_metric}, for the Sod shock tube problem with \(n_{\rm edge}=100\).}
	\label{fig:sod-shock-tube-entropy-change}
\end{figure}

\subsection{Shu--Osher shock tube}
The Shu--Osher problem \cite{shu1989efficient} tests whether the proposed dissipation operator can enable a robust solution for a moving shock by reducing the amplitude of oscillations and thus positivity violations without overdamping smooth post-shock oscillations. The one-dimensional Euler equations are solved on \(\Omega=[-5,5]\) until \(t_f=1.8\) with the initial data
\begin{equation}\label{eq:shu_osher_ic}
	(\rho,u,p)(x,0)	=
	\begin{cases}
		(3.857143,\;2.629369,\;10.33333), & x\le -4, \\
		(1+0.2\sin(5x),\;0,\;1),          & x>-4 .
	\end{cases}
\end{equation}
The reference solution is taken from Chan \cite{chan2025artificial} and was computed using a fifth-order weighted essentially non-oscillatory (WENO) scheme with \(25000\) cells. \Cref{fig:shu-osher-shock-tube} shows the density profile obtained using \(p=4\) and \(p=8\) operators on a mesh with \(n_{\rm edge}=100\). Both formulations with the proposed dissipation reach \(t_f=1.8\). At fixed \(n_{\rm edge}=100\), increasing the degree from \(p=4\) to \(p=8\) improves agreement with the reference post-shock density oscillations, showing that the dissipation retains the benefit of increased polynomial degree. At \(p=4\), the D-SBP split-form discretization with LLF interface dissipation alone reaches the final time, but develops much larger oscillations than when used with the proposed dissipation. At \(p=8\), the D-SBP discretization with LLF dissipation alone is terminated as its adaptive time step is at most \(10^{-8}\) for ten consecutive steps. The lower panels in \cref{fig:shu-osher-shock-tube} show that \(\varepsilon_k\) is concentrated near the moving shock, while \(\theta_k\), as expected from \cref{rem:relative_roughness_sensor}, also responds to smaller oscillations in the post-shock region.
\begin{figure}[!t]
	\centering
	\begin{subfigure}{0.48\textwidth}
		\includegraphics[width=\linewidth]{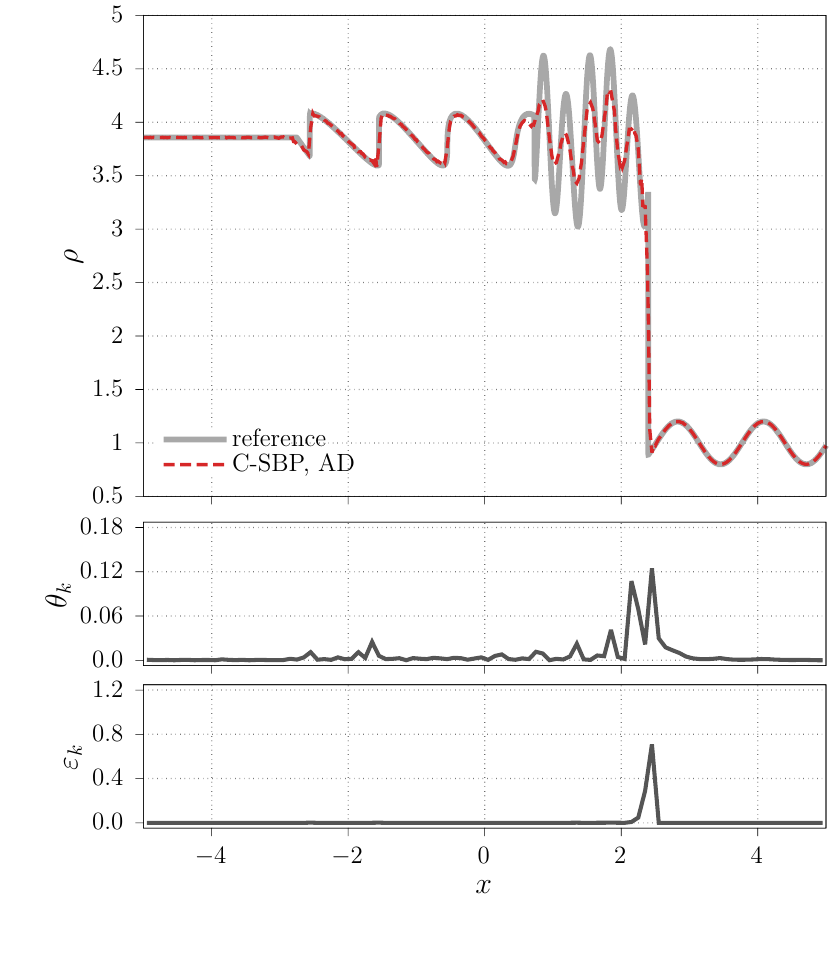}
		\caption{C-SBP, \(p=4\).}
	\end{subfigure}
	\hfill
	\begin{subfigure}{0.48\textwidth}
		\includegraphics[width=\linewidth]{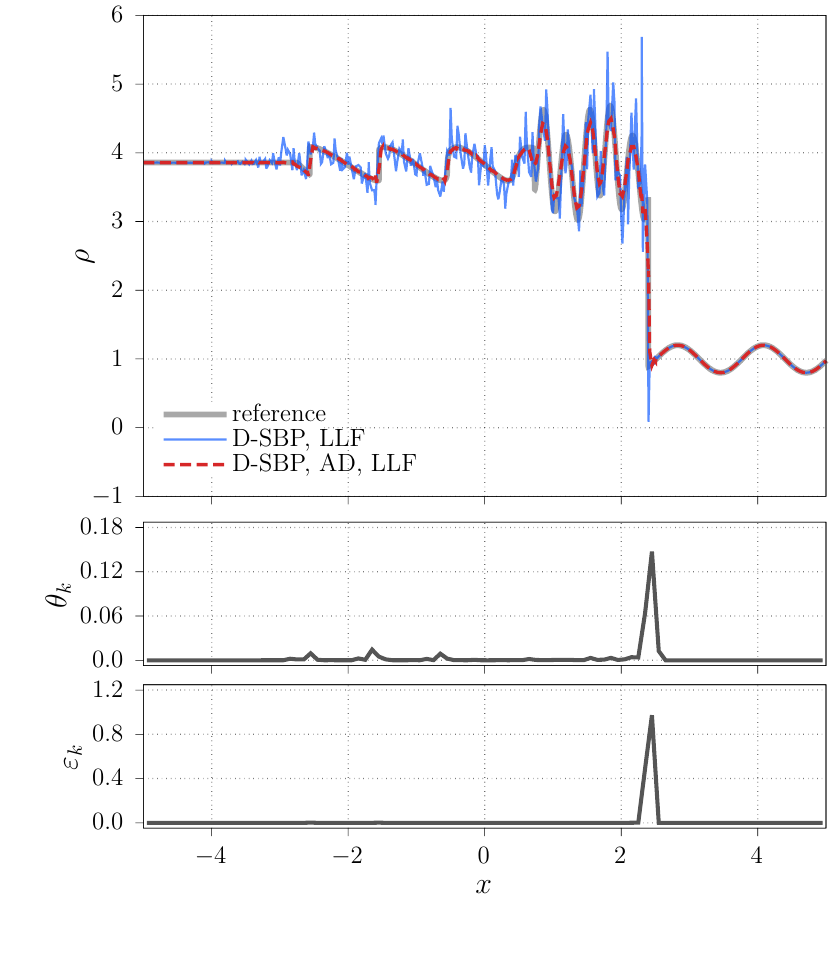}
		\caption{D-SBP, \(p=4\).}
	\end{subfigure}
	\begin{subfigure}{0.48\textwidth}
		\includegraphics[width=\linewidth]{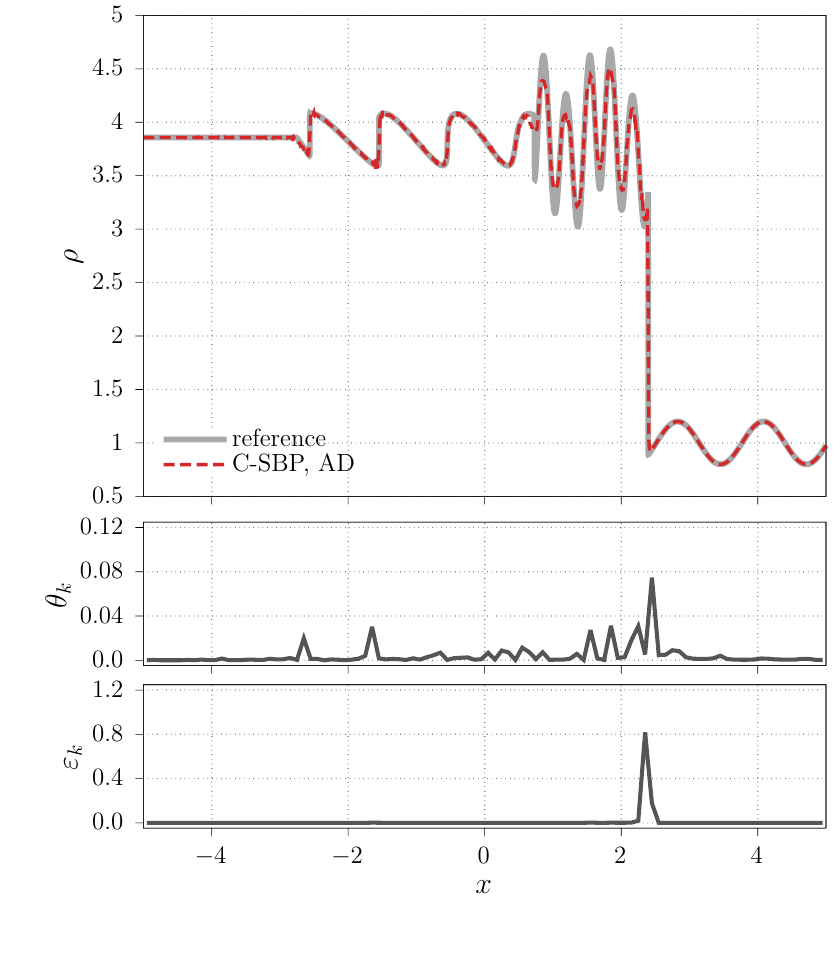}
		\caption{C-SBP, \(p=8\).}
	\end{subfigure}
	\hfill
	\begin{subfigure}{0.48\textwidth}
		\includegraphics[width=\linewidth]{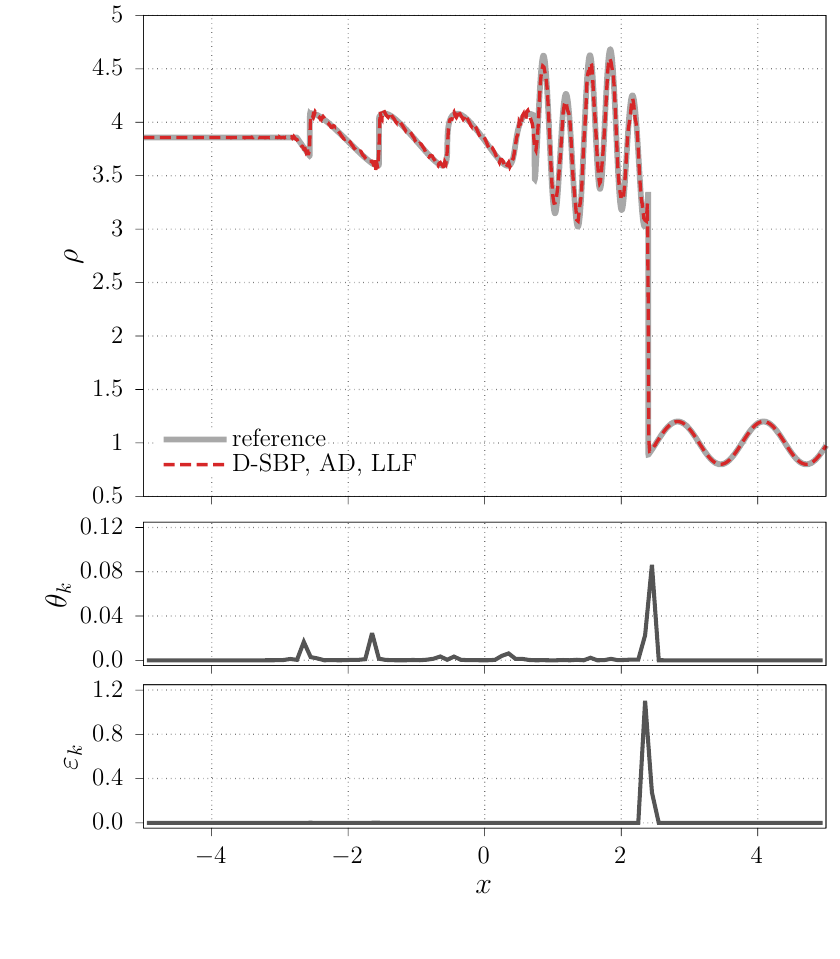}
		\caption{D-SBP, \(p=8\).}
	\end{subfigure}
	\caption{Shu--Osher density at \(t_f=1.8\), using
	\(n_{\rm edge}=100\).}
	\label{fig:shu-osher-shock-tube}
\end{figure}

\subsection{Kelvin--Helmholtz instability}
The Kelvin--Helmholtz problem is included to test whether the operator-normalized volume sensor introduces excessive dissipation in under-resolved vortical flows and whether the artificial dissipation in \cref{eq:local_artificial_dissipation_update} improves robustness, as this problem is known to be difficult to solve using high-order methods. The two-dimensional Euler equations are solved on \(\Omega=[-1,1]^2\) until \(t_f=15\) with periodic boundary conditions. Let
\[
	B(y)=\tanh(15y+7.5)-\tanh(15y-7.5),
\]
then, following the setup in \cite{rueda2022subcell}, the primitive initial condition is given by
\begin{equation}\label{eq:khi_ic}
	\rho=\frac{1}{2}+\frac{3}{4}B(y),
	\qquad
	u=\frac{1}{2}\left(B(y)-1\right),
	\qquad
	v=\frac{1}{10}\sin(2\pi x),
	\qquad
	p=1 .
\end{equation}

\Cref{tab:kh-failure-combined} reports the failure times for \(n_{\rm edge}=8,16,32\) and polynomial degrees \(p\in \{2,\ldots,10\}\). An entry of \(15.00\) means that the run reached the final time. Every split-form D-SBP computation with LLF interface dissipation alone fails by \(t=5.34\). With the proposed artificial dissipation, all C-SBP and D-SBP computations with \(p\leq7\) reach \(t_f=15\). Overall, \(21\) of the \(27\) cases reach the final time for each formulation, with the remaining failures confined to \(p\geq8\). 

\begin{table}[!t]
\centering
\scriptsize
\setlength{\tabcolsep}{4.5pt}
\renewcommand{\arraystretch}{1.15}
\caption{Kelvin--Helmholtz instability failure times. Bold entries did not reach $t_f=15.0$.}
\label{tab:kh-failure-combined}
\begin{tabular}{l r@{\hspace{0.9em}} c c c c c c c c c}
\toprule
Scheme & $n_{\rm edge}$ & $p=2$ & $p=3$ & $p=4$ & $p=5$ & $p=6$ & $p=7$ & $p=8$ & $p=9$ & $p=10$ \\
\midrule
\multirow{3}{*}{D-SBP, LLF} & 8 & \textbf{5.34} & \textbf{4.29} & \textbf{4.26} & \textbf{2.31} & \textbf{3.73} & \textbf{3.29} & \textbf{1.95} & \textbf{2.07} & \textbf{2.14} \\
 & 16 & \textbf{5.19} & \textbf{3.49} & \textbf{1.60} & \textbf{3.51} & \textbf{2.29} & \textbf{3.74} & \textbf{3.57} & \textbf{2.40} & \textbf{3.57} \\
 & 32 & \textbf{3.30} & \textbf{3.74} & \textbf{3.69} & \textbf{3.04} & \textbf{3.61} & \textbf{3.11} & \textbf{3.13} & \textbf{3.18} & \textbf{3.07} \\
\addlinespace[0.6em]
\multirow{3}{*}{D-SBP, AD, LLF} & 8 & 15.00 & 15.00 & 15.00 & 15.00 & 15.00 & 15.00 & 15.00 & \textbf{3.27} & 15.00 \\
 & 16 & 15.00 & 15.00 & 15.00 & 15.00 & 15.00 & 15.00 & 15.00 & \textbf{8.13} & \textbf{12.88} \\
 & 32 & 15.00 & 15.00 & 15.00 & 15.00 & 15.00 & 15.00 & \textbf{8.42} & \textbf{12.00} & \textbf{10.39} \\
\addlinespace[0.6em]
\multirow{3}{*}{C-SBP, AD} & 8 & 15.00 & 15.00 & 15.00 & 15.00 & 15.00 & 15.00 & 15.00 & \textbf{3.35} & \textbf{3.71} \\
 & 16 & 15.00 & 15.00 & 15.00 & 15.00 & 15.00 & 15.00 & 15.00 & 15.00 & \textbf{4.82} \\
 & 32 & 15.00 & 15.00 & 15.00 & 15.00 & 15.00 & 15.00 & \textbf{13.51} & \textbf{4.23} & \textbf{5.01} \\
\bottomrule
\end{tabular}
\end{table}

\Cref{fig:kh-csbp-diagnostics,fig:kh-dsbp-diagnostics} show representative \(p=7\), \(n_{\rm edge}=110\) computations at \(t=3.7\) and \(t=6.7\). Each computation uses \(1{,}379{,}400\) degrees of freedom, which is close to the resolution used in \cite{bolm2026invariant}. In both formulations, regions of appreciable \(\theta_k\) generally coincide with non-negligible \(\varepsilon_k\) and follow the thin shear layers and developing vortical structures. The correspondence is stronger for D-SBP. For C-SBP, \(\theta_k\) has a somewhat broader low-level response than \(\varepsilon_k\), indicating that the nonlinear activation and entropy-viscosity factor suppress much of the background sensor response. The stronger correspondence for D-SBP is consistent with the LLF interface dissipation damping some of the oscillatory content. The density fields obtained with both formulations qualitatively resemble the recently published result in \cite{bolm2026invariant}, which uses adaptive mesh refinement and a positivity-preserving entropy-stable discretization on quadrilateral meshes. Together with the failure study, this qualitative agreement suggests that the robustness improvement is obtained without adding excessive dissipation, retaining the characteristic roll-up and vortical structures. Additional comparisons up to \(t_f=1.5\) using matched adaptive time-integration settings showed no substantial increase in accepted-step counts when the proposed dissipation was added.

\begin{figure}[!t]
	\centering
	\begin{subfigure}{0.95\textwidth}
		\includegraphics[width=\linewidth]{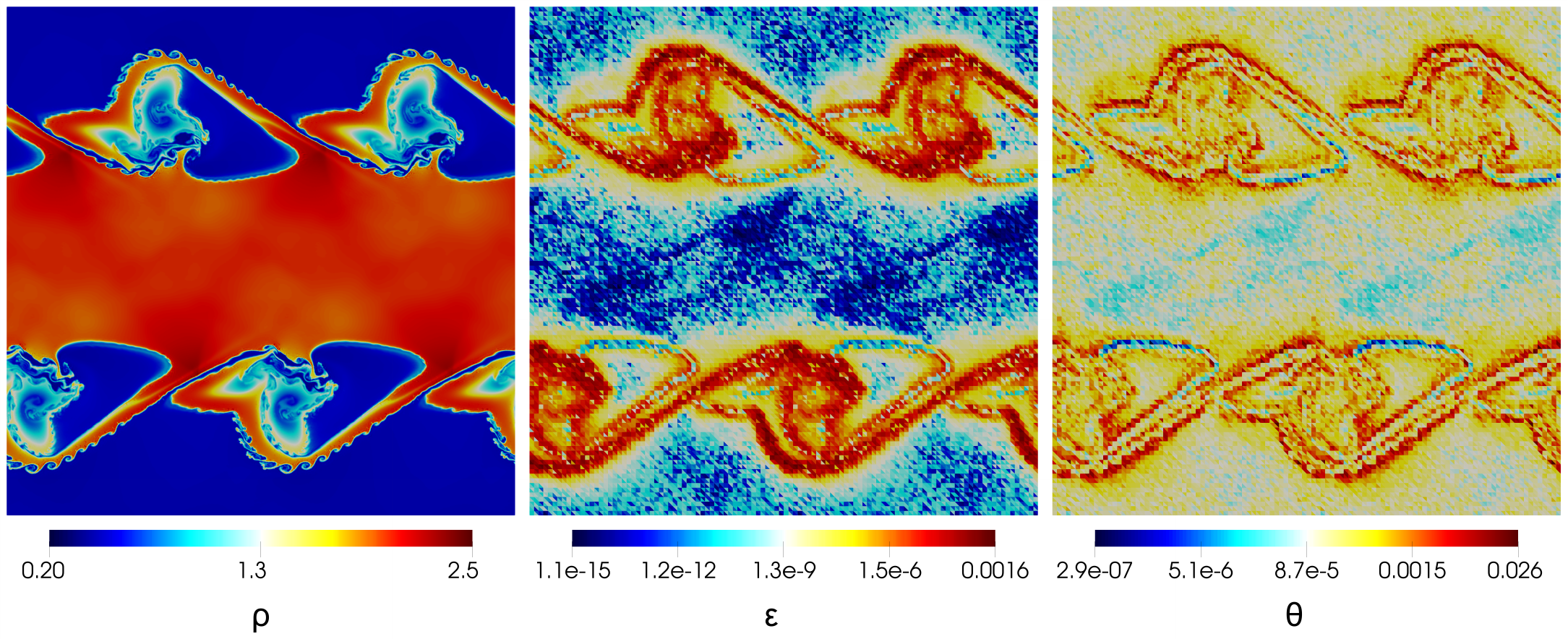}
		\caption{\(t=3.7\).}
	\end{subfigure}
	\begin{subfigure}{0.95\textwidth}
		\includegraphics[width=\linewidth]{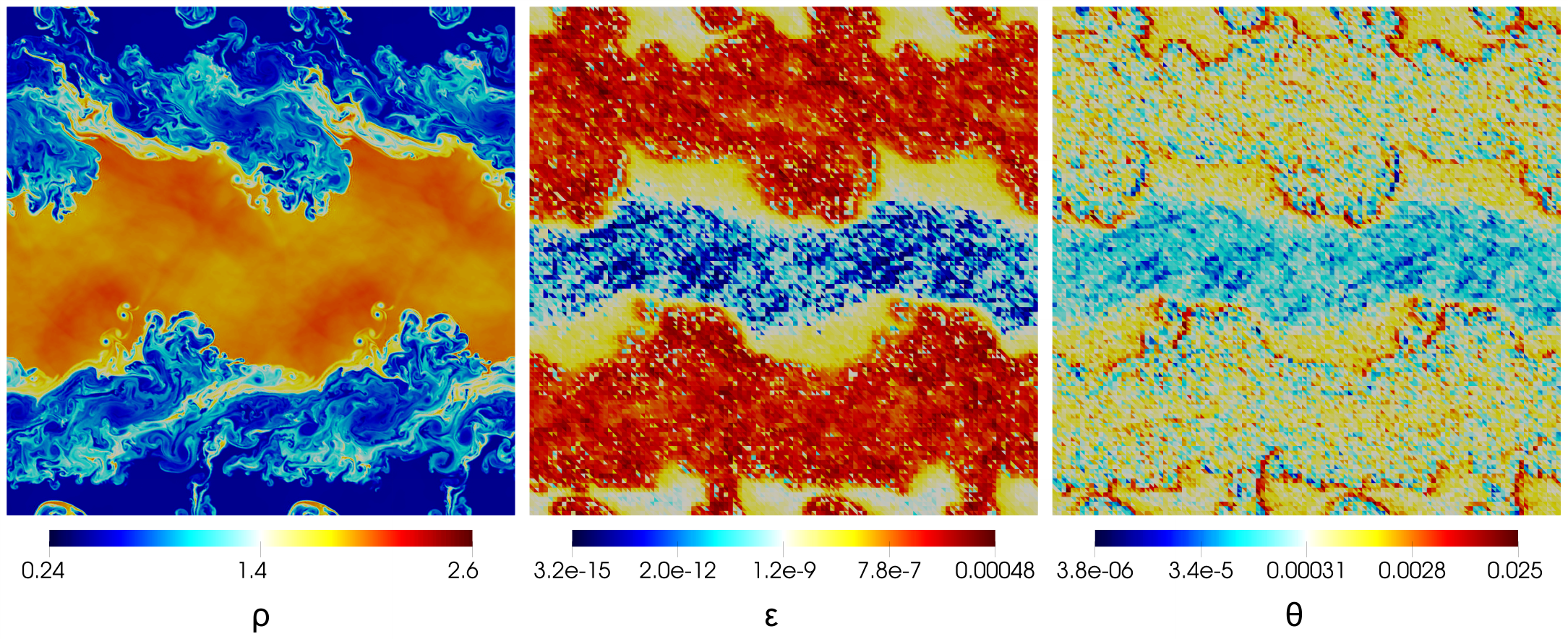}
		\caption{\(t=6.7\).}
	\end{subfigure}
	\caption{Kelvin--Helmholtz density, entropy budget \(\varepsilon_k\), and smoothness sensor \(\theta_k\) for C-SBP discretization with the proposed artificial dissipation, using \(p=7\) and \(n_{\rm edge}=110\). Note that the colormaps for \(\varepsilon_k\) and \(\theta_k\) use logarithmic scaling with values below \(10^{-16}\) clipped.} \label{fig:kh-csbp-diagnostics}
\end{figure}

\begin{figure}[!t]
	\centering
	\begin{subfigure}{0.95\textwidth}
		\includegraphics[width=\linewidth]{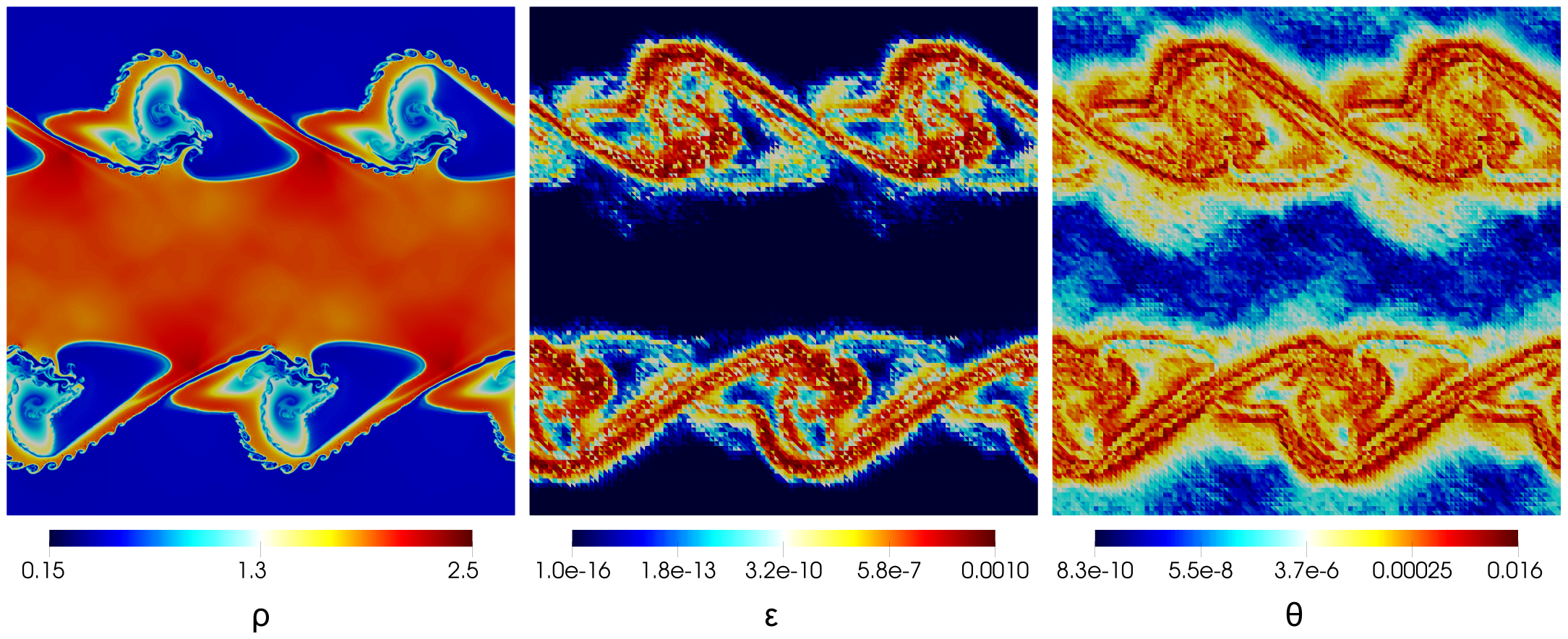}
		\caption{\(t=3.7\).}
	\end{subfigure}
	\begin{subfigure}{0.95\textwidth}
		\includegraphics[width=\linewidth]{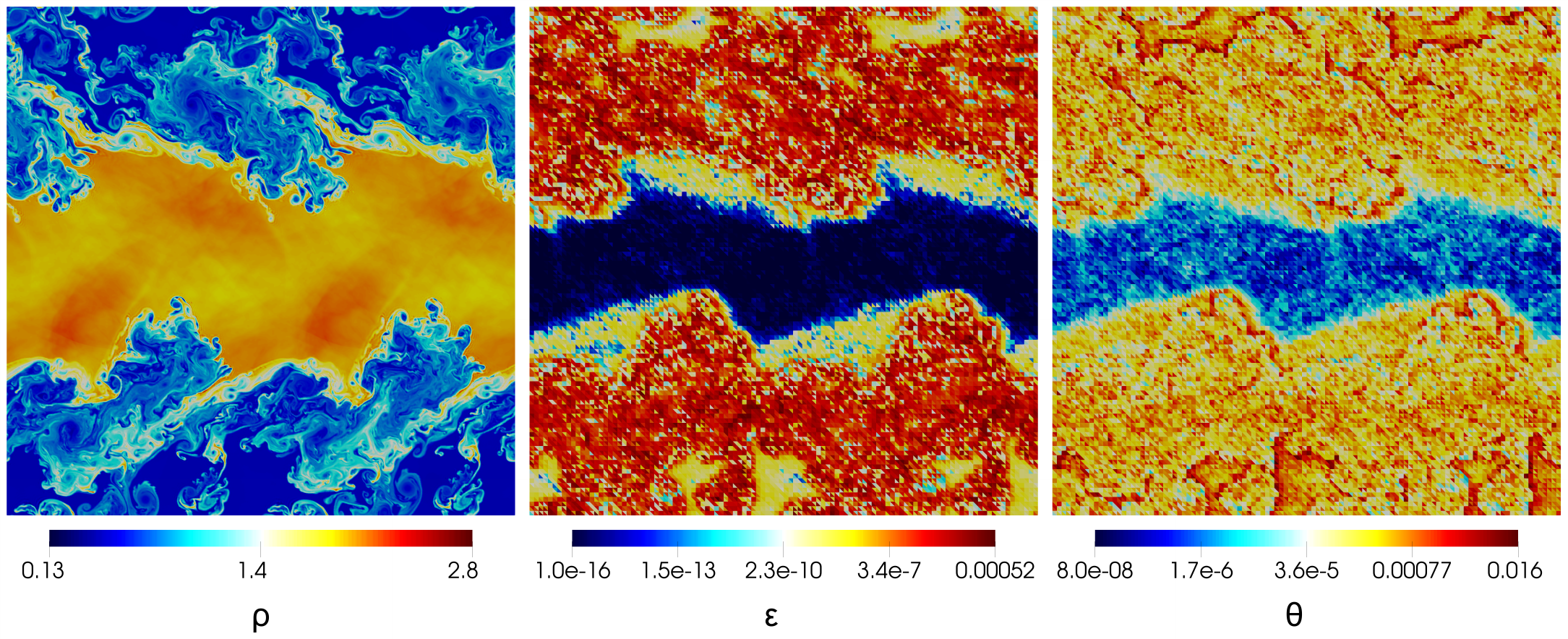}
		\caption{\(t=6.7\).}
	\end{subfigure}
	\caption{Kelvin--Helmholtz density, entropy budget \(\varepsilon_k\), and smoothness sensor \(\theta_k\) for D-SBP with LLF discretization and the proposed artificial dissipation, using \(p=7\) and \(n_{\rm edge}=110\). Note that the colormaps for \(\varepsilon_k\) and \(\theta_k\) use logarithmic scaling with values below \(10^{-16}\) clipped.} \label{fig:kh-dsbp-diagnostics}
\end{figure}

\subsection{Two-dimensional Riemann problem}
The two-dimensional Riemann problem \cite{lax1998solution,kurganov2002solution,liska2003comparison} is a multidimensional shock-interaction test case. Four constant states meet at the origin, and the resulting flow contains interacting shocks and contact discontinuities. Following \cite{chan2018discretely}, we solve the Euler equations on the enlarged periodic computational domain \(\Omega=[-1,1]^2\) until \(t_f=0.25\). The primitive variables are initialized by four constant quadrant states,
\begin{equation}\label{eq:riemann2d_ic}
	(\rho,u,v,p)(x,y,0)
	=
	\begin{cases}
		(0.5313,\;0,\;0,\;0.4), & x\ge0,\; y\ge0, \\
		(1,\;0.7276,\;0,\;1),   & x<0,\; y\ge0,   \\
		(0.8,\;0,\;0,\;1),      & x<0,\; y<0,     \\
		(1,\;0,\;0.7276,\;1),   & x\ge0,\; y<0 .
	\end{cases}
\end{equation}

This problem is a severe positivity-sensitive stress test. The proposed artificial dissipation is entropy dissipative, but it is not a positivity-preserving limiter. For this test, we apply the artificial dissipation to the direct divergence-form base spatial discretization, which is not provably entropy stable, since the entropy-stable discretizations do not complete the simulation due to violation of positivity requirements. For D-SBP operators, the LLF interface dissipation is used along with the proposed artificial dissipation. The results below therefore test the capability and implementation flexibility of the proposed artificial dissipation for problems with shock waves.

\begin{figure}[!t]
	\centering
	\begin{subfigure}{0.95\textwidth}
		\includegraphics[width=\linewidth]{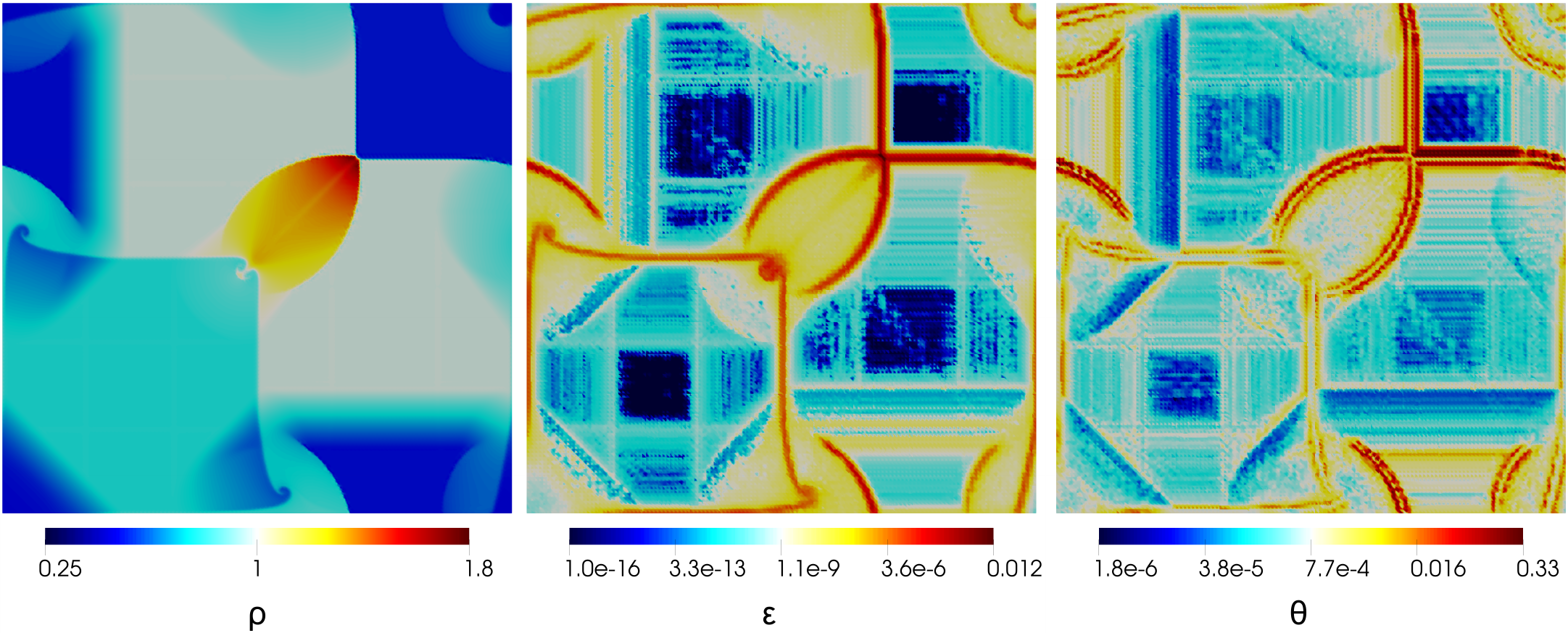}
		\caption{C-SBP discretization with AD.}
	\end{subfigure}
	\begin{subfigure}{0.95\textwidth}
		\includegraphics[width=\linewidth]{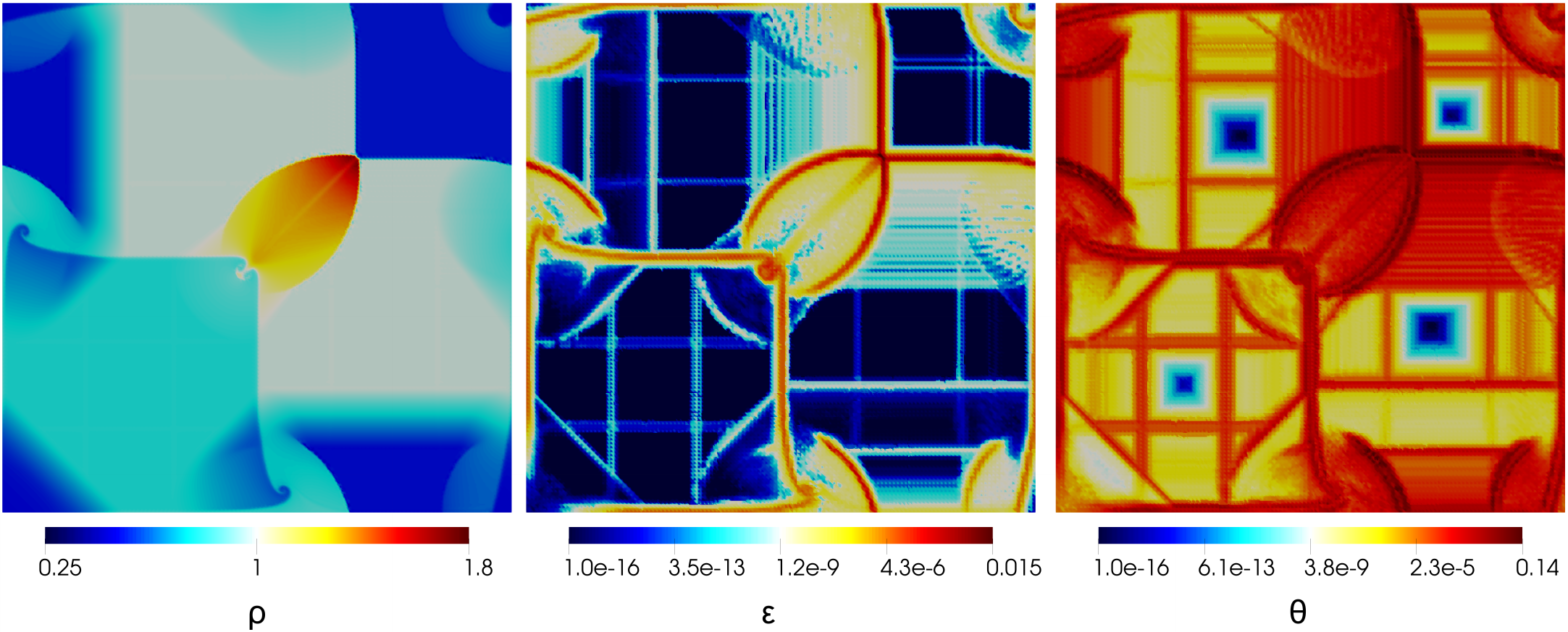}
		\caption{D-SBP discretization with LLF and AD.}
	\end{subfigure}
	\caption{Two-dimensional Riemann problem using \(p=3\), \(n_{\rm edge}=128\), the direct divergence-form base spatial discretization, and the proposed artificial dissipation. Each row shows the density, the entropy budget \(\varepsilon_k\), and the sensor \(\theta_k\) at \(t_f=0.25\). Note that the colormaps for \(\varepsilon_k\) and \(\theta_k\) use logarithmic scaling with values below \(10^{-16}\) clipped.}
	\label{fig:riemann2d-div-diagnostics}
\end{figure}

\Cref{fig:riemann2d-div-diagnostics} shows that both computations reach the final time and produce closely agreeing density fields. In both formulations, \(\theta_k\) responds strongly near the shock-interaction structures but also to small oscillations away from the discontinuities. The entropy budget \(\varepsilon_k\) is more sharply concentrated near the shocks, indicating that the entropy-viscosity factor, \(b_{\nu,k}\), suppresses much of the broader sensor response. Thus, the proposed dissipation retains its targeted response for both C-SBP and D-SBP divergence-form discretizations.

\subsection{Three-dimensional inviscid Taylor--Green vortex}
The inviscid Taylor--Green vortex is included as a three-dimensional under-resolved vortical-flow test. The preceding tests demonstrate improved robustness and performance for solutions containing shocks. Since excessive dissipation can suppress physically relevant flow structures, we next examine the damping introduced by the proposed dissipation in the inviscid Taylor--Green vortex through its effect on kinetic-energy evolution. We solve the three-dimensional Euler equations on \(\Omega=[-\pi,\pi]^3\) with periodic boundary conditions until \(t_f=20\). We set \(u_0=\rho_0=R=L=1\), \(\mathrm{Ma}=0.1\), and 
\[
	p_0=\frac{\rho_0u_0^2}{\gamma\mathrm{Ma}^2},
	\qquad T_0=\frac{p_0}{R\rho_0}.
\]
Since the flow is nearly incompressible at \(\mathrm{Ma}=0.1\) and the Ranocha volume flux is kinetic-energy preserving, the kinetic energy is expected to remain nearly constant in the absence of added spatial dissipation. Using identical time-integration settings, we compare the effects of the different stabilization choices on kinetic-energy evolution. The initial condition in terms of the primitive variables is given by 
\begin{align}
	u& = u_0 \sin\!\left(\frac{x}{L}\right)	\cos\!\left(\frac{y}{L}\right)\cos\!\left(\frac{z}{L}\right),
	\qquad
	v =	-u_0 \cos\!\left(\frac{x}{L}\right)	\sin\!\left(\frac{y}{L}\right)\cos\!\left(\frac{z}{L}\right),
	\qquad
	w=0,
	\nonumber\\
	p&=	p_0	+ \frac{\rho_0u_0^2}{16} \left[\cos\!\left(\frac{2x}{L}\right)
	+ \cos\!\left(\frac{2y}{L}\right)\right] \left[	\cos\!\left(\frac{2z}{L}\right)+2\right],
	\label{eq:euler3d_tgv_ic}
	\qquad
	\rho =\frac{p}{R T_0}.
	\nonumber
\end{align}
We compute the kinetic energy,  
\[
	E_K(t)	= \frac{1}{2|\Omega|} \int_{\Omega}\rho(u^2+v^2+w^2)\,\dd\Omega,
\]
and its dissipation rate, \(-{\dd E_K}/{\dd t}\). The dissipation rate is approximated using finite differences of the kinetic-energy time series.

\begin{figure}[!t]
	\centering
	\begin{subfigure}{0.48\textwidth}
		\includegraphics[width=\linewidth]{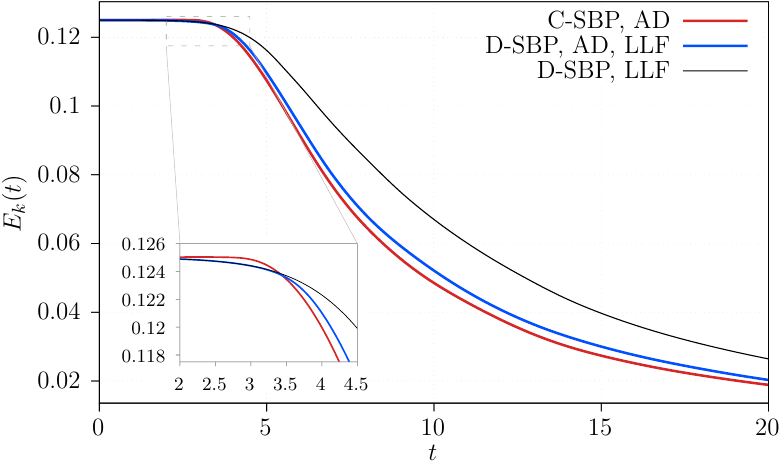}
		\caption{\(p=3\), kinetic energy.}
	\end{subfigure}
	\begin{subfigure}{0.48\textwidth}
		\includegraphics[width=\linewidth]{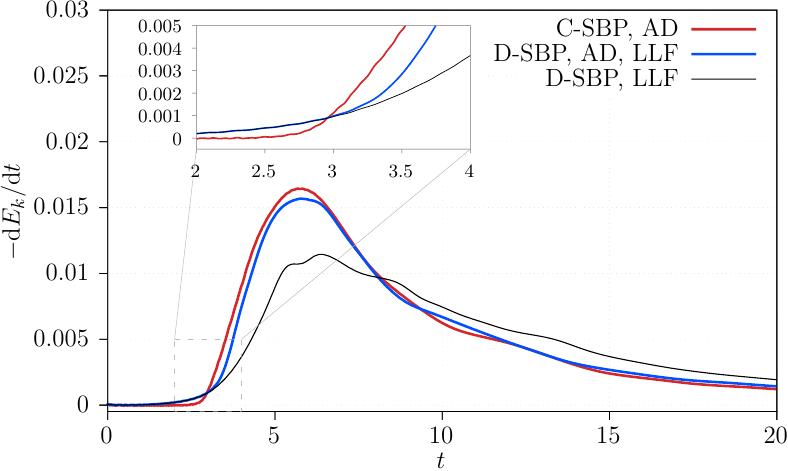}
		\caption{\(p=3\), kinetic-energy dissipation rate.}
	\end{subfigure}
	\begin{subfigure}{0.48\textwidth}
		\includegraphics[width=\linewidth]{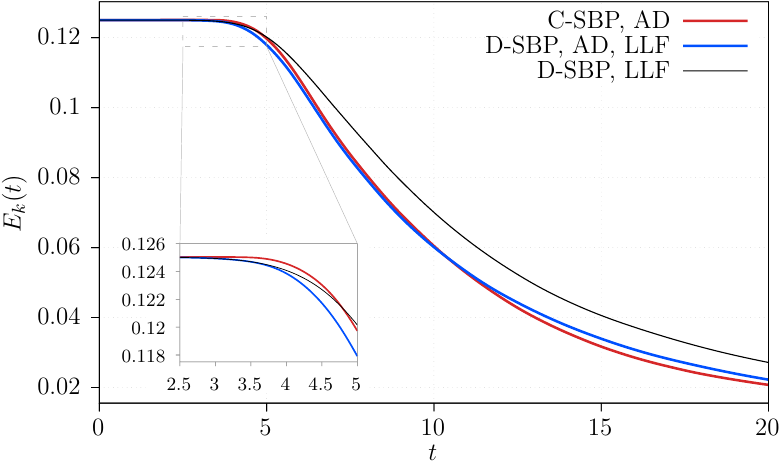}
		\caption{\(p=4\), kinetic energy.}
	\end{subfigure}
	\begin{subfigure}{0.48\textwidth}
		\includegraphics[width=\linewidth]{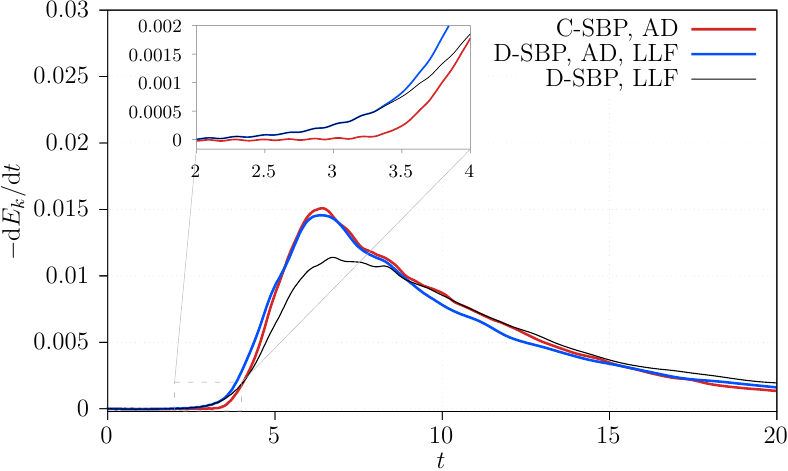}
		\caption{\(p=4\), kinetic-energy dissipation rate.}
	\end{subfigure}
	\caption{Inviscid Taylor--Green vortex kinetic energy and kinetic-energy dissipation rate for \(p=3,4\) and \(n_{\rm edge}=6\).}
	\label{fig:tgv3d-beta-comparison}
\end{figure}

\Cref{fig:tgv3d-beta-comparison} shows that, for both \(p=3\) and \(p=4\), C-SBP with AD has a lower initial kinetic-energy dissipation rate than D-SBP with LLF for \(t\lesssim3\). The D-SBP histories with and without AD initially nearly coincide, indicating little additional damping from AD during this phase. At later times, the two AD histories remain close and exhibit larger peak dissipation rates and greater total kinetic-energy decay than D-SBP with LLF alone. These later differences also reflect the response of the AD to larger oscillations as the solution evolves.

\section{Conclusion}\label{sec:conclusion}
In this paper, we have developed an element-local volume dissipation with separately controlled spatial distribution, magnitude, activation, and convergence rate under mesh refinement. The formulation is discretization agnostic, applying to continuous and discontinuous SBP and compatible Galerkin methods without depending on the form of the base spatial residual. The added term has zero conservative moment and nonpositive entropy contribution independent of the base method, thereby preserving conservation and semidiscrete entropy stability whenever the base method satisfies these properties. Moreover, changing the admissible activation parameters, local dissipation orders, or nonnegative entropy budget does not invalidate these guarantees. For smooth admissible solutions on shape-regular, quasi-uniform mesh families with affine element mappings, the assembled dissipation is \(\fn{O}(h^{p_{\rm L}})\), improving to \(\fn{O}(h^{p_{\rm H}})\) for \(p\geq2\) and \(p_{\rm L}\le p_{\rm H}\). With suitable order choices, formal accuracy can therefore be retained for smooth solutions. For implementation, \cref{app:implementation_summary} collects the core formulas and the parameter choices used in the numerical experiments.

The two- and three-dimensional isentropic-vortex tests show that the added dissipation retains high-order solution convergence and leaves the smooth long-time solution errors essentially unchanged. The proposed dissipation substantially improves robustness for the Kelvin--Helmholtz instability: all C-SBP and D-SBP computations with \(p\leq7\) reach the final time, whereas every D-SBP computation with LLF interface dissipation alone fails. The representative density fields also retain the characteristic roll-up and vortical structures. In the computations involving shocks, the entropy budgets are concentrated near the discontinuities. The Taylor--Green tests show lower initial kinetic-energy dissipation for C-SBP with AD than for D-SBP with LLF for \(t\lesssim3\), and little initial influence of AD when applied to D-SBP with LLF. These results are obtained using the same dissipation parameter-selection rules across the test problems. Taken together, they demonstrate that the proposed targeted artificial dissipation can substantially improve robustness without excessive damping of resolved flow structures.

Failures remain for runs with \(p\geq8\) in the Kelvin--Helmholtz tests, as the proposed dissipation does not guarantee positivity. The tested entropy-stable discretizations also fail in the two-dimensional Riemann problem even with AD because of positivity violations. Adapting the prescribed dissipation parameters and combining the operator with positivity-preserving limiting are therefore important directions for future work.

\appendix
\crefalias{section}{appendix}
\section{Relation to Existing Volume Dissipation Schemes} \label{app:alternative_choices}

Recall the general structure of our volume dissipation given by \eqref{eq:local_artificial_dissipation_update} and \eqref{eq:local_alpha_definition},
\begin{equation*} 
\der[\bm u_g]{t}=\bm r_g+\overline{\H}_g^{-1} \sum_{\Omega_k\in\mathcal{T}_h} \overline{\P}_k^T\bm d_k, 
\qquad \bm d_k \coloneqq - \bm\alpha_{k,\epsilon}\varepsilon_k, 
\end{equation*}
where $\varepsilon_k = \kappa_{\rm diss}\theta_k^{\beta_k} b_{\nu,k}$ specifies the total amount of volume dissipation in each element, and
\begin{equation*}
\bm\alpha_{k,\epsilon} \coloneqq
\frac{b_k\bm a_k}{b_k^2+\epsilon_{\rm mach}V_k^2},
\qquad b_k \coloneqq\bm w_k^T\bm a_k,
\qquad \epsilon_{\rm mach}>0 .
\end{equation*}

\subsection{Dissipation of Bercik, Craig Penner, and Zingg}

Consider now the artificial volume dissipation operators of Bercik, Craig Penner, and Zingg~\cite{bercik2026stable} acting on entropy variables,
\begin{equation} \label{eq:bercik_diss_main_form} 
\der[\bm u_g]{t}=\bm r_g - \kappa_{\rm diss} \overline{\H}_g^{-1} \sum_{\Omega_k\in\mathcal{T}_h} \overline{\P}_k^T \sum_{i=1}^d \left( \overline{\mathsf{D}}^{s_a}_{\xi_i} \right)^T \left( \mathsf{C}_i \otimes \mathsf{A}_{0,k} \right) \overline{\mathsf{D}}^{s_a}_{\xi_i} \bm w_k , 
\end{equation}
where $\mathsf{C}_i$ contains some boundary correction and approximation to the maximum wave speeds under the condition that $\left( \mathsf{C}_i \otimes \mathsf{A}_{0,k} \right) \succeq 0$ is symmetric positive semidefinite. As was suggested in \cref{rem:ref2phys}, here the reference-to-physical scaling is handled implicitly by metric terms in $\mathsf{C}_i$. Note also that in~\cite{bercik2026stable}, $\overline{\mathsf{D}}^{s_a}_{\xi_i}$ is defined as an undivided finite difference derivative operator, but for element-type SBP operators, this is equivalent (up to a constant) to the convention in this work of the difference operator defined on the reference element. To place this operator in the current framework, set \(\theta_k=1\) and choose
\begin{equation} \label{eq:bercik_diss_new_form}
\bm a_k \coloneqq \sum_{i=1}^d \left( \overline{\mathsf{D}}^{s_a}_{\xi_i} \right)^T \left( \mathsf{C}_i \otimes \mathsf{A}_{0,k} \right) \overline{\mathsf{D}}^{s_a}_{\xi_i} \bm w_k , \qquad
b_{\nu,k} \coloneq \bm w_k^T \bm a_k .
\end{equation}
The complete update then satisfies
\(\bm d_k=-\kappa_{\rm diss}b_k^2\bm a_k/(b_k^2+\epsilon_{\rm mach}V_k^2)\). If \(b_k>0\), then \(\bm d_k\to-\kappa_{\rm diss}\bm a_k\) in the exact-arithmetic limit \(\epsilon_{\rm mach}\to0^+\). If \(b_k=0\), positive semidefiniteness implies \(\bm a_k=\bm0\), and therefore \(\bm d_k=\bm0=-\kappa_{\rm diss}\bm a_k\) for every \(\epsilon_{\rm mach}>0\). Thus, \cref{eq:bercik_diss_main_form} is recovered as \(\epsilon_{\rm mach}\to0^+\).
This is similar in structure to the choices made in \eqref{eq:scaled_reference_distribution_a} for $\bm a_k$, and \eqref{eq:epsilon_k} and \eqref{eq:bnu_cheap} for $\varepsilon_k$, but with a few notable differences. First, for minimally nodal spectral elements, the highest nontrivial distribution order \(s_a=p\) restricts the dissipation to the highest-degree polynomial mode and relaxes polynomial exactness by one degree~\cite{bercik2026stable}. The method proposed here instead fixes \(s_a=1\) and uses the separate sensor order \(s_\theta\) together with \(\theta_k^{\beta_k}\) to obtain the prescribed smooth-state decay. Second, in \eqref{eq:bercik_diss_new_form}, the same entropy variables $\bm w_k$ are used for both $\bm a_k$ and $b_{\nu,k}$, allowing one to collapse to the simpler linear form \eqref{eq:bercik_diss_main_form}. Instead, as explained in section~\ref{sec:entropy_stable_artificial_dissipation_form}, in this work we use $\bm u_k$ in $\bm a_k$ for accuracy reasons, and separately use $\bm w_k$ in the budget term $b_{\nu,k}$ to mimic an entropy viscosity.

\subsection{Local Projection Stabilization}

Local projection stabilization (LPS) is another technique that involves penalizing high-order polynomial degrees in the solution. Here, we explicitly consider the adaptation of LPS to the entropy-stable SBP framework by Hicken \cite{hicken2020entropy}, which takes the form
\begin{equation} \label{eq:lps} 
\der[\bm u_g]{t}=\bm r_g - \kappa_{\rm diss} \overline{\H}_g^{-1} \sum_{\Omega_k\in\mathcal{T}_h} \overline{\P}_k^T \overline{\mathsf{\Pi}}^T \overline{\H}_\xi \mathsf{C}_k \overline{\mathsf{\Pi}} \bm w_k , 
\end{equation}
where $\mathsf{\Pi}$ isolates the high-frequency content in $\bm w_k$ through a discrete $L^2$ projection (or more specifically, isolates the component outside the degree-$p$ polynomial space), and $\mathsf{C}_k \succeq 0$ again contains some symmetric positive semidefinite approximation to the maximum wave speeds and change-of-variable Jacobian matrix $\A_0$.
The same complete-update limiting argument recovers \cref{eq:lps} by setting \(\theta_k=1\) and choosing
\begin{equation} \label{eq:lps_new_form}
\bm a_k \coloneqq \overline{\mathsf{\Pi}}^T \overline{\H}_\xi \mathsf{C}_k \overline{\mathsf{\Pi}} \bm w_k , \qquad
b_{\nu,k} \coloneq \bm w_k^T \bm a_k .
\end{equation}
However, this general framework has the advantage that, unlike in \cite{hicken2020entropy}, one need not apply the projection directly to the entropy variables to ensure entropy stability. We can for example instead take
\[
\bm a_k \coloneqq \overline{\mathsf{\Pi}}^T \overline{\H}_\xi \overline{\mathsf{\Pi}} \bm u_k
\]
together with any appropriate nonnegative scalar budget $b_{\nu,k}$ and bounded smoothness sensor $\theta_k$, while still being provably entropy-dissipative. Changing the variables alone does not remove the nodal restriction: a projection that annihilates the full degree-\(p\) space is zero on a minimally nodal element. Avoiding this restriction requires a distribution that acts on lower-degree variations, together with a nonlinear budget that supplies the required smooth-state decay, as in the first-derivative construction used here.

\subsection{Residual-based artificial viscosity}

Residual-based viscosity methods can also be adapted to fit within the current framework. Although common in the finite-element community, here we focus our comparison to the implementations within the SBP framework performed by Stiernstr{\"o}m \etal~\cite{STIERNSTROM} and Niemel{\"a}~\cite{niemela2026residual}. These methods introduce a variable viscosity coefficient of the form 
\begin{equation}
    \nu_k = \min\left(\nu_{\max,k},\nu_{{\rm RV},k}\right),
\end{equation}
where the first-order and high-order viscosities are separated as
\[
\nu_{\max,k}=C_1\lambda_k h_k,
    \qquad
    \nu_{{\rm RV},k}
    = C_{\rm RV}h_k^2 \frac{\abs{\mathcal{R}(\bm u_k)}}{N(\bm u_k)} ,
\]
and $C_1$ and $C_{\rm RV}$ are constants, $\lambda_k$ is some approximation to the maximum wavespeed, $h_k$ is the mesh spacing, $\mathcal{R}$ is some approximation to the residual, and $N$ is a suitable normalization. We defer to \cite{STIERNSTROM, niemela2026residual} for details on how $\mathcal{R}$ and $N$ can be computed. This viscosity coefficient then multiplies some appropriate viscous flux. 

Within the present framework, it is natural to interpret the residual viscosity primarily as a smoothness sensor. By setting \(\beta_k=1\), using \(\theta_k\) directly as the activation, and defining
\begin{equation}
    \theta_k
    \coloneqq
    \min\left(1,\frac{\nu_{{\rm RV},k}}{\nu_{\max,k}}\right),
\end{equation}
the first-order viscosity can then be used to compute the entropy budget, similar to our current choice in \eqref{eq:bnu_cheap}. For example, we may define a budget
\begin{equation}
    b_{\nu,k}
    \coloneqq
    \nu_{\max,k}\mu_k
    \sum_{i=1}^{d}
    \left(\overline{\D}_{+,\xi_i}\bm w_k\right)^T
    \left(\mathsf{B}_i \H_{\xi}\otimes\bar{\A}_{0,k}\right)
    \left(\overline{\D}_{+,\xi_i}\bm w_k\right),
\end{equation}
and a distribution vector described by
\begin{equation} \label{eq:app_RV_a}
    \bm a_k
    =
    \sum_{i=1}^{d}
    \overline{\D}_{+,\xi_i}^{T} \overline{\mathsf{B}}_i
    \overline{\H}_{\xi}
    \overline{\D}_{+,\xi_i} \bm u_k,
\end{equation}
where $\D_{+,\xi_i}$ is an upwind SBP operator (e.g.\,see~\cite{niemela2026residual}), and $\mathsf{B}_i$ is some boundary modification that facilitates the imposition of boundary conditions, e.g.\, in one dimension given by $\operatorname{diag} (0,1,\dots,1,0)$. If $\mathsf{B}_i$ is instead placed on the left-hand side of the operator in \eqref{eq:app_RV_a}, this boundary modification would allow us to rewrite a direct application of a second derivative as in \cite{niemela2026residual} in quadratic form since interface terms vanish after integration by parts, thereby more closely resembling the distribution vector \eqref{eq:scaled_reference_distribution_a} used in this work. In contrast to \cite{niemela2026residual}, however, here we have moved $\mathsf{B}_i$ inside the operator to ensure that the conservation property $\overline{\I}_k^T\bm\alpha_{k,\epsilon}=\bm0$ holds.

For a scalar problem with $\bm w=\bm u$ and $\A_0=1$, in the unregularized limit, the above construction reduces to the familiar scalar residual-viscosity diffusion. For nonlinear systems, the entropy normalization and the positive entropy budget introduced by this framework provide an entropy-stable modification of the monolithic conservative-variable residual viscosity operator, that is, a single viscosity coefficient applied across the various conservative variables. The construction of \cite{niemela2026residual}, by contrast, employs a physics-based viscosity for the Navier--Stokes equations that treats the components separately, and would therefore correspond to further generalizations of this framework.

The original residual viscosity methods also allow the viscosity $\nu_k$ to vary from node to node, while above we have simplified it to be a scalar in each element. Such variation, however, can be retained by modifying the distribution vector. Let $\vartheta_{k,m}$ denote a nodal residual activation and set
\[
    \theta_k=\max_m\vartheta_{k,m},
    \qquad
    \widehat{\vartheta}_{k,m}
    =\begin{cases}
        \frac{\vartheta_{k,m}}{\theta_k}, & \text{if $\theta_k > \epsilon_{\rm mach}$}\\
        0, & \text{if $\theta_k \leq \epsilon_{\rm mach}$}
    \end{cases},
    \qquad
    0\leq\widehat{\vartheta}_{k,m}\leq1.
\]
The scalar $\theta_k$ then controls the overall amount of residual activation, while the normalized nodal variation can be incorporated into the distribution and budget through
\[
    \mathsf{A}_k^{\rm RV}
    \coloneqq
    \operatorname{diag}\left(
    \widehat{\vartheta}_{k,1},\ldots,
    \widehat{\vartheta}_{k,n_p}\right).
\]
For example, $\overline{\H}_{\xi}$ can be replaced by
$\left( \H_{\xi}\mathsf{A}_k^{\rm RV} \right) \otimes \I_{n_c}$ in both of the preceding quadratic forms. Since $\mathsf{A}_k^{\rm RV}\succeq0$, this retains the conservative and entropy-dissipative structure while reproducing the spatial localization characteristic of nodal residual viscosity methods.

\section{Implementation Summary}\label{app:implementation_summary}

\setlength{\tabcolsep}{5pt}
\renewcommand{\arraystretch}{1.12}
\begin{center}
\begin{tabular}{@{}>{\arraybackslash}p{0.065\textwidth}p{0.45\textwidth}p{0.4\textwidth}@{}}
\toprule
\textbf{Eq.} & \textbf{Core equations} & \textbf{Related equations} \\
\midrule
\eqref{eq:local_artificial_dissipation_update}
& \(\displaystyle \bm d_k=-\bm\alpha_{k,\epsilon}\varepsilon_k\)
&
\(\bm\alpha_{k,\epsilon}\) \cref{eq:local_alpha_definition};
\(\varepsilon_k\) \cref{eq:epsilon_k} \\
\addlinespace[0.75em]

\eqref{eq:local_alpha_definition}
&
\(\displaystyle
\bm\alpha_{k,\epsilon}
=\frac{b_k\bm a_k}{b_k^2+\epsilon_{\rm mach}V_k^2}
\)
&
\(b_k\) \cref{eq:b_k};
\(\bm a_k\) \cref{eq:scaled_reference_distribution_a};
\(V_k\) \cref{eq:element_volume} \\
\addlinespace[0.75em]

\eqref{eq:b_k}
& \(\displaystyle b_k=\bm w_k^T\bm a_k\)
&
\(\bm w_k\) \cref{eq:entropy_variables};
\(\bm a_k\) \cref{eq:scaled_reference_distribution_a} \\
\addlinespace[0.75em]

\eqref{eq:scaled_reference_distribution_a}
&
\(\displaystyle
\bm a_k=\M_k\bm u_k,
\quad
\M_k=\mu_k\hat{\M}_{s_a}
\)
&
\(\hat{\M}_s\) \cref{eq:reference_smoothness_matrix};
\(\mu_k\) \cref{eq:distribution_scaling_factor} \\
\addlinespace[0.75em]

\eqref{eq:epsilon_k}
&
\(\displaystyle
\varepsilon_k=\kappa_{\rm diss}\theta_k^{\beta_k}b_{\nu,k}
\)
&
\(\theta_k\) \cref{eq:volume_sensor_operator_normalized_fluctuation};
\(\beta_k\) \cref{eq:sensor_exponent};
\(b_{\nu,k}\) \cref{eq:bnu_cheap} \\
\addlinespace[0.75em]

\eqref{eq:volume_sensor_operator_normalized_fluctuation}
&
\(\displaystyle
\theta_k=
\frac{S_k}{\Lambda_{s_\theta}
\left[N'_k+\widehat V(h_{{\rm eff},k}/L_{\rm ref})^2\right]}
\)
&
\makecell[l]{
\(S_k\) \cref{eq:volume_sensor_numerator};
\(N'_k\) \cref{eq:volume_sensor_fluctuation_norm};
\(\widetilde{\bm u}'_k\) \cref{eq:scaled_state_fluctuation};
\\
\(\hat{\M}_{s_\theta}\) \cref{eq:reference_smoothness_matrix};
\(\Lambda_{s_\theta}\) \cref{eq:Lambda_s};
\(h_{{\rm eff},k}\) \cref{eq:element_volume}
 } \\
\addlinespace[0.75em]

\eqref{eq:theta_exponent_indicator}
&
\(\displaystyle
\chi_k=
\frac{\theta_k^{m_\theta}}
{\theta_k^{m_\theta}+c_\theta^{m_\theta}}
\)
&
\(\theta_k\) \cref{eq:volume_sensor_operator_normalized_fluctuation} \\
\addlinespace[0.75em]

\eqref{eq:local_dissipation_order}
&
\(\displaystyle
p_k=(1-\chi_k)p_{\rm H}+\chi_kp_{\rm L}
\)
&
\(\chi_k\) \cref{eq:theta_exponent_indicator} \\
\addlinespace[0.75em]

\eqref{eq:sensor_exponent}
&
\(\displaystyle
\beta_k=\frac{p_k}{2s_\theta-2}
\)
&
\(p_k\) \cref{eq:local_dissipation_order} \\
\addlinespace[0.75em]

\bottomrule
\end{tabular}
\end{center}

\vspace{0.5cm}

\newpage
\noindent\textbf{Artificial-dissipation parameter choices}
\begin{center}
\setlength{\tabcolsep}{4pt}
\begin{tabular}{@{}>{\arraybackslash}p{0.075\textwidth}>{\raggedright\arraybackslash}p{0.525\textwidth}>{\centering\arraybackslash}p{0.075\textwidth}>{\raggedright\arraybackslash}p{0.25\textwidth}@{}}
\toprule
\textbf{Eq.} & \textbf{Parameter} & \textbf{Eq.} & \textbf{Parameter} \\
\midrule
\eqref{eq:epsilon_k}
& \(\kappa_{\rm diss}=1\)
& \eqref{eq:scaled_reference_distribution_a}
& \(s_a=1\) \\
\eqref{eq:s_theta}
& \(s_\theta=\lceil p/2\rceil+1\;(p\geq2)\)
& \eqref{eq:volume_sensor_operator_normalized_fluctuation}
& \(L_{\rm ref}=1\)\\
\eqref{eq:local_dissipation_order}
& \(p_{\rm L}=p\) 
&\eqref{eq:local_dissipation_order}
& \(p_{\rm H}=2p\) \\
\eqref{eq:theta_exponent_indicator}
& \(c_\theta=10^{-4}\) 
&\eqref{eq:theta_exponent_indicator}
& \(m_\theta=4\)
\\
\bottomrule
\end{tabular}
\end{center}

\section*{CRediT authorship contribution statement}
\textbf{Zelalem Arega Worku:} Conceptualization, Methodology, Formal analysis, Software, Investigation, Validation, Visualization, Writing -- original draft, Writing -- review \& editing. 
\textbf{Alex Bercik:} Methodology, Formal analysis, Investigation, Software, Validation, Writing -- original draft (appendix), Writing -- review \& editing.
\textbf{David C. Del Rey Fern{\'a}ndez:} Supervision,  Validation, Writing -- review \& editing.
\textbf{David W. Zingg:} Supervision, Validation, Writing -- review \& editing.

\section*{Data availability}
Data are available from the corresponding author upon reasonable request.

\section*{Declaration of generative AI and AI-assisted
technologies in the manuscript preparation process}
OpenAI Codex was used for coding assistance and to identify and address writing and clarity issues in the manuscript. The authors wrote and reviewed the entire manuscript and take full responsibility for its content.

\bibliographystyle{ieeetr}
\bibliography{ref}

\end{document}